\numberwithin{equation}{section}
\newtheoremstyle{Teorema}
{3pt}
{3pt}
{\slshape}
{}
{\bfseries}
{:}
{\newline}
{}
\newtheorem{theorem}{Theorem}[section]
\newtheorem{definition}[theorem]{Definition}
\newtheorem{corollary}[theorem]{Corollary}
\newtheorem{lemma}[theorem]{Lemma}
\theoremstyle{definition}
\DeclareMathOperator{\D}{d}        
\DeclareMathOperator{\supp}{supp}  
\newcommand{\ZZ}{\mathbb{Z}}      
\newcommand{\RR}{\mathbb{R}}      
\newcommand{\CC}{\mathbb{C}}      
\newcommand{\n}{\vec{n}}          
\date{\today}
\begin{document}
	
\pdfrender{StrokeColor=black,TextRenderingMode=2,LineWidth=0.5pt}
	
	\title[Strong asymptotics of polynomials]{Strong asymptotics of multi-level Hermite-Pad\'{e} polynomials}
	
	\author{L. G. Gonz\'alez Ricardo}
	\address{Department of Mathematics, Universidad Carlos \textsc{iii} de Madrid, Avenida de la Universidad, 30 CP-28911, Leganés, Madrid, Spain.}
	\email{luisggon@math.uc3m.es}
	\thanks{The first author was supported in part by a research fellowship from Department of Mathematics of Universidad Carlos III de Madrid, Spain.}
	
	\author{G. L\'opez Lagomasino}
	\address{Department of Mathematics, Universidad Carlos \textsc{iii} de Madrid, Avenida de la Universidad, 30 CP-28911, Leganés, Madrid, Spain.}
	\email{lago@math.uc3m.es}
	\thanks{The second author was supported in part by the research grant PGC2018-096504-B-C33 of Ministerio de Ciencia, Innovaci\'on y Universidades, Spain.}
	
	\date{\today}
	
	\begin{abstract}
		We obtain the strong asymptotics of multiple orthogonal polynomials which arise in a mixed type Hermite-Pad\'e approximation problem defined on a Nikishin system of functions.  The results obtained allow to give exact estimates of the rate of convergence of the approximating rational functions and the strong asymptotics of Cauchy biorthogonal
polynomials.
\end{abstract}

	\maketitle
	
	\medskip
	
	\noindent \textbf{Keywords:} multiple orthogonal polynomials, biorthogonal polynomials, Hermite-Pad\'e approximation, strong asymptotics, fixed point theorems
	
	\medskip
	
	\noindent \textbf{AMS classification:}  Primary: 42C05; 30E10,  Secondary: 41A21, 47H10

\section{Introduction}

\subsection{Background}
Szeg\H{o}'s theory of orthogonal polynomials \cite[Chap. X-XII]{szego} is a gem of real and complex analysis not only for its intrinsic beauty but also because of its many consequences, extensions, and applications in different fields (see \cite{simon1,simon2}). The natural framework in which the theory is developed is for polynomials orthogonal with respect to measures supported on the unit circle. Then, it may be transposed to polynomials orthogonal with respect to measures supported on bounded intervals of the real line. To illustrate, we present what is perhaps the most important result of Szeg\H{o}'s theory in the context of orthogonal polynomials on segments of the real line. First, we need some notation.

Let $\mu$ be a finite positive Borel measure supported on an interval $\Delta = [a,b] \subset \RR$. We say that $\mu$ satisfies Szeg\H{o}'s condition and write $\mu \in \mathcal{S}(\Delta)$ if
  \begin{equation}
        \label{df:szego:c}
        \int_{\Delta}   {\ln \mu'(x)} \D \eta_{\Delta}(x)  > -\infty,
\end{equation}
where
\[ \D \eta_{\Delta}(x) = \frac{\D x}{\sqrt{(b-x)(x-a)}}, \qquad x \in \Delta,
\]
and $\mu'$ denotes the Radon-Nikodym derivative of $\mu$ with respect to the Lebesgue measure. When $\mu \in \mathcal{S}(\Delta)$
	\[
		\mathsf{G}(\mu,z) := \exp\left[\frac{\sqrt{(z-b)(z-a)}}{2\pi}\int_\Delta \frac{\ln(\sqrt{(b-x)(x-a)}\,\mu'(x))}{x-z}\D \eta_{\Delta}(x) \right],
	\]
    is called the Szeg\H{o} function of $\mu$. The square root outside the integral is taken to be positive for $z > b$ and those inside the integral are positive when $x \in (a,b)$. The Szeg\H{o} function verifies
	\begin{equation} \label{RN}
		\begin{cases}
			\mathsf{G}(\mu,z) \neq 0 \textrm{ for } z \in \overline{\CC}\setminus\Delta,\\
			\mathsf{G}(\mu,\infty)>0,\\
			\lim_{y \to 0} |\mathsf{G}(\mu,x+iy)|^2 =  (\sqrt{(b-x)(x-a)}\,\mu'(x))^{-1},\quad \textrm{for a.e.}\,\, x \in \Delta.
		\end{cases}
	\end{equation}

    Let $\Psi$ be the conformal map from $\overline{\CC} \setminus \Delta$ onto the complement (in $\overline{\CC}$) of the closed unit disk such that $\Psi(\infty) = \infty$, $\Psi'(\infty) > 0$.
 Let $l_n(z) = \kappa_n z^n + \cdots $ be the $n$-th orthonormal polynomial with respect to $\mu$ with positive leading coefficient; that is, $\kappa_n > 0$ and
    \[ \int l_n(x) l_k(x) \D \mu(x)= \delta_{n,k},  \qquad n,k \geq 0
    \]
    where $\delta_{n,k}$ denotes the Kronecker $\delta$. We have

    \begin{theorem}[G. Szeg\H{o}]
        \label{Szego}
        Assume that $\mu \in \mathcal{S}(\Delta)$, then
        \[ \lim_{n\to \infty} \frac{l_n(z)}{\Psi^n(z)} = \frac{1}{\sqrt{2\pi}} \mathsf{G}(\mu,z),
        \]
        uniformly on compact subsets of $\overline{\CC} \setminus \Delta$, and
        \[ \lim_{n \to \infty} \frac{\kappa_n}{C^n} = \frac{\mathsf{G}(\mu,\infty)}{\sqrt{2\pi}},
        \]
        where $C^{-1} = (b-a)/4$ is the logarithmic capacity of $\Delta$.
    \end{theorem}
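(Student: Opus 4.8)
\emph{Strategy.} The plan is to transfer the problem from the segment $\Delta$ to the unit circle $\mathbb{T}$ --- the natural home of Szeg\H{o}'s theory --- prove the circle version there, and pull the asymptotics back through the conformal map. An affine change of variable reduces the theorem to the corresponding statement for a rescaled interval and preserves the Szeg\H{o} condition \eqref{df:szego:c}, so we may assume $\Delta=[-1,1]$; then $\Psi(z)=z+\sqrt{z^2-1}$ (with the branch positive for $z>1$) and $C=2$. Put $x=\tfrac12(w+w^{-1})$; for $w=e^{i\theta}$ this is $x=\cos\theta$, a two-to-one map of $\mathbb{T}$ onto $\Delta$, inverted off $\Delta$ by $w=\Psi(z)$. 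Let $\widehat\mu$ be the even (invariant under $\theta\mapsto-\theta$) measure on $\mathbb{T}$ whose push-forward under $x=\cos\theta$ is $\mu$. A change-of-variables computation, using $\int_0^\pi\ln|\sin\theta|\,d\theta=-\pi\ln2$, shows that \eqref{df:szego:c} holds for $\mu$ if and only if $\int_{-\pi}^{\pi}\ln\widehat\mu'(\theta)\,d\theta>-\infty$, and that the interval Szeg\H{o} function $\mathsf{G}(\mu,z)$ equals, up to an explicit constant (which accounts for the $1/\sqrt{2\pi}$ in the statement), the reciprocal of the classical circle Szeg\H{o} function $D_{\widehat\mu}$ --- the outer function on $\mathbb{D}$ with $|D_{\widehat\mu}(e^{i\theta})|^2=\widehat\mu'(\theta)$ and $D_{\widehat\mu}(0)>0$ --- evaluated at $1/\Psi(z)\in\mathbb{D}$.

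\emph{The circle estimates.} Next I would prove Szeg\H{o}'s theorem on $\mathbb{T}$ for $\widehat\mu$: if $\varphi_n=\kappa_n(\widehat\mu)w^n+\cdots$ are the orthonormal polynomials and $\varphi_n^*(w)=w^n\overline{\varphi_n(1/\bar w)}$ their reversed polynomials, then $\kappa_n(\widehat\mu)\to D_{\widehat\mu}(0)^{-1}$ and $\varphi_n^*\to D_{\widehat\mu}^{-1}$ uniformly on compact subsets of $\mathbb{D}$. The engine is the extremal identity $\kappa_n(\widehat\mu)^{-2}=\min\{\|Q\|_{L^2(\widehat\mu)}^2:\ Q(0)=1,\ \deg Q\le n\}$, which is non-increasing in $n$. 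For the lower bound on this minimum I would apply the arithmetic--geometric mean inequality together with $\int_{-\pi}^{\pi}\ln|Q(e^{i\theta})|\,\tfrac{d\theta}{2\pi}\ge\ln|Q(0)|=0$ (Jensen), obtaining $\|Q\|_{L^2(\widehat\mu)}^2\ge\exp\!\big(\int_{-\pi}^{\pi}\ln\widehat\mu'(\theta)\,\tfrac{d\theta}{2\pi}\big)=D_{\widehat\mu}(0)^2$; for the matching upper bound I would test with a polynomial of degree $\le n$ approximating, in $L^2(\widehat\mu)$, the function $D_{\widehat\mu}(0)/D_{\widehat\mu}\in H^2(\widehat\mu)$, which is legitimate precisely under the Szeg\H{o} condition. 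Since $D_{\widehat\mu}(0)/D_{\widehat\mu}$ is the unique minimizer of the limiting extremal problem and $\varphi_n^*/\kappa_n(\widehat\mu)$ is a minimizing sequence for it, the Hilbert-space structure forces $\varphi_n^*/\kappa_n(\widehat\mu)\to D_{\widehat\mu}(0)/D_{\widehat\mu}$ in $L^2(\widehat\mu)$; a reproducing-kernel estimate (Christoffel--Darboux) upgrades this to locally uniform convergence on $\mathbb{D}$, and together with $\kappa_n(\widehat\mu)\to D_{\widehat\mu}(0)^{-1}$ this yields the two displayed limits.

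\emph{Transfer back.} Because $\widehat\mu$ is symmetric, $\varphi_{2n}$ has real coefficients, and Szeg\H{o}'s classical algebraic identity expresses $l_n\big(\tfrac12(w+w^{-1})\big)$ as an explicit combination of $w^{-n}\varphi_{2n}(w)$ and $w^{-n}\varphi_{2n}^*(w)$, with coefficients built from $\varphi_{2n}(0)\to0$ and a scalar tending to an explicit limit. Substituting $w=\Psi(z)$ --- so that $1/w=1/\Psi(z)$ stays in a compact subset of $\mathbb{D}$ as $z$ ranges over a compact subset of $\overline{\CC}\setminus\Delta$ --- and rewriting $w^{-2n}\varphi_{2n}(w)=\varphi_{2n}^*(1/w)$ by reversal, the circle estimates show that $l_n(z)/\Psi^n(z)$ converges, uniformly on compacta of $\overline{\CC}\setminus\Delta$, to a constant multiple of $D_{\widehat\mu}(1/\Psi(z))^{-1}$, that is,
\[
\lim_{n\to\infty}\frac{l_n(z)}{\Psi^n(z)}=\frac{1}{\sqrt{2\pi}}\,\mathsf{G}(\mu,z).
\]
Since $l_n(z)/\Psi^n(z)\to\kappa_n/C^n$ as $z\to\infty$ and $\mathsf{G}(\mu,\infty)>0$, letting $z\to\infty$ (or tracking leading coefficients through the algebraic identity) yields $\kappa_n/C^n\to\mathsf{G}(\mu,\infty)/\sqrt{2\pi}$.

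\emph{Main difficulty.} The genuinely delicate step is the sharp two-sided control of $\kappa_n(\widehat\mu)^{-2}$, because the Szeg\H{o} condition is exactly the borderline hypothesis ensuring that $D_{\widehat\mu}$ is well defined with nonzero boundary modulus almost everywhere: the lower bound must tolerate a weight $\widehat\mu'$ that may vanish or blow up, and the upper bound requires approximating $D_{\widehat\mu}(0)/D_{\widehat\mu}$ by polynomials in $L^2(\widehat\mu)$ --- equivalently, that polynomials are dense in $H^2(\widehat\mu)$ and that a possible singular part of $\widehat\mu$ is no obstruction --- which is essentially equivalent to the Szeg\H{o} condition (Szeg\H{o}--Kolmogorov--Krein) and requires a careful argument via outer functions and Jensen's inequality. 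By comparison, the change of variables, the algebraic identity, and the passage to the limit on compacta of $\overline{\CC}\setminus\Delta$ are routine. The same core difficulty reappears, in the guise of approximating $1/\mathsf{G}(\mu,\cdot)$ in $L^2(\mu)$, if one instead argues directly on $\Delta$ via the extremal problem $\kappa_n^{-2}=\min\{\int|P|^2\,d\mu:\ \deg P=n,\ P\ \text{monic}\}$ together with a normal-families argument for the ratio asymptotics.
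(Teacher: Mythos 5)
The paper does not prove Theorem~\ref{Szego}: it is Szeg\H{o}'s classical theorem, stated only as background and attributed to \cite[Chap.\ X--XII]{szego}, and the sole remark the paper makes is that the second limit follows from the first by evaluating at $z=\infty$. There is therefore no proof in the paper to compare yours against. That said, your sketch is the standard textbook derivation: transfer to the circle via $x=\tfrac12(w+w^{-1})$, prove the circle version through the extremal identity for $\kappa_n(\widehat\mu)^{-2}$ (Jensen/AM--GM for the lower bound, polynomial approximation of $D_{\widehat\mu}(0)/D_{\widehat\mu}$ in $L^2(\widehat\mu)$ for the upper bound), then pull back through Szeg\H{o}'s algebraic identity relating $l_n$ to $\varphi_{2n}$ and $\varphi_{2n}^*$ with $w=\Psi(z)$. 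The outline is correct, and you rightly single out the genuinely delicate step: the matching upper bound for $\kappa_n(\widehat\mu)^{-2}$, which rests on showing that under the Szeg\H{o} condition the outer function $D_{\widehat\mu}(0)/D_{\widehat\mu}$ can be approximated by polynomials in $L^2(\widehat\mu)$ with a possible singular part of $\widehat\mu$ contributing nothing in the limit. In a complete write-up that step, together with the reproducing-kernel upgrade from $L^2$ to locally uniform convergence on $\mathbb{D}$, is where all the real work lies; the change of variables, the algebraic identity, and the passage to the limit on compacta of $\overline{\CC}\setminus\Delta$ are, as you say, routine.
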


    The second formula follows directly from the first one applied at $z = \infty$. The first formula expresses that asymptotically, the $n$-th orthonormal polynomial behaves like the $n$-th power of the fixed function $\Psi$.

    Our goal is to obtain a similar result for the multiple orthogonal polynomials (which share orthogonality relations with a system of measures)  arising in  a certain mixed type approximation problem. Previously, this type of extension of Szeg\H{o}'s theory was considered by A.I. Aptekarev \cite{sasha89, sasha99} for type \textsc{ii} Hermite-Pad\'e polynomials of so called Angelesco and Nikishin systems of measures. Our work is related with Nikishin systems of measures but the orthogonality relations differ from the type \textsc{ii} case. However, as we explain later, our main result Theorem \ref{th1}
    generalizes in several directions that of A.I. Aptekarev in \cite[Theorem 2]{sasha99}.

    In the study of multi-peakon solutions of the Degasperis-Procesi equation, a mixed type Hermite-Pad\'e approximation problem was introduced  in \cite{JacLun05}. The same construction showed to be useful in the analysis of  the two matrix model \cite{BGS13} and Cauchy biorthogonal polynomials \cite{Bertola:CBOPs}. The approximated objects are systems of two functions, defined in terms of Cauchy transforms of two measures supported on disjoint intervals of the real line. This application  motivated the introduction in \cite{LSJ19} of an approximation problem for an arbitrary Nikishin system generated by $m$ measures (which reduces to the previous case when $m=2$) and study the convergence of the method.

    Recently, the approximation problem was generalized further by V.G. Lysov in \cite{lysov20} where he studied the asymptotic zero distribution and the logarithmic asymptotics of the associated multiple orthogonal polynomials. The ratio asymptotics of these multiple orthogonal polynomials was given in \cite{SLL22}. Here, we wish to find their strong asymptotic behavior. Before stating our main result, let us briefly present the so called Nikishin systems of measures and Lysov’s definition of multi-level Hermite-Pad\'e approximants.

\subsection{Nikishin systems and their multi-level Hermite-Pad\'e polynomials}

    Nikishin systems of measures were first introduced by E.M. Nikishin in \cite{nikishin82}. Throughout the years, it has been shown that polynomials which share orthogonality
    relations with respect to such systems of measures behave asymptotically similar  to
    standard orthogonal polynomials (with respect to a single measure). A brief overview of the subject may be found in \cite{lago21}.

    Let $\mathcal{M}(\Delta)$ denote the class of all finite positive Borel measures $s$ with compact support $\supp s \subset \Delta=[a,b]$ containing an infinite set of points. If $s$  satisfies Szeg\H{o}'s condition \eqref{df:szego:c} we write $s \in \mathcal{S}(\Delta)$. By
    \[
        \widehat{s}(z) := \int\frac{\D s(x)}{z-x}
    \]
    we denote the Cauchy transform of the measure $s$. Obviously, $\widehat{s} \in\mathcal{H}(\overline{\CC}\setminus\Delta)$ (holomorphic on $\overline{\CC}\setminus\Delta$).

	Let $\Delta_\alpha$, $\Delta_\beta$ be two intervals contained in the real line such that $\Delta_\alpha\cap\Delta_\beta=\varnothing$. Consider the measures $\sigma_\alpha\in\mathcal{M}(\Delta_\alpha)$ and $\sigma_\beta\in\mathcal{M}(\Delta_\beta)$. With these two measures we construct a third one as follows (using differential notation)
	$$\D\langle \sigma_\alpha,\sigma_\beta\rangle(x): = \widehat{\sigma}_\beta(x)\D\sigma_\alpha(x).$$
Notice that $\langle \sigma_\alpha,\sigma_\beta\rangle \neq \langle \sigma_\beta,\sigma_\alpha \rangle$.  We can extend this operation inductively to more than two measures. For example, if   $\Delta_\gamma \cap\Delta_\alpha = \varnothing$ and
$\sigma_\gamma \in \mathcal{M}(\Delta_\gamma)$, we define
\[ \langle \sigma_\gamma,\sigma_\alpha,\sigma_\beta \rangle :=  \langle \sigma_\gamma, \langle \sigma_\alpha,\sigma_\beta \rangle \rangle.
\]
	
	\begin{definition}
		\label{df:nikishin}
		Take a collection $\Delta_j$, $j=1,\ldots,m, m \geq 2,$ of intervals such that
\begin{equation} \label{empty}  \Delta_j\cap\Delta_{j+1}=\varnothing, \qquad  j=1,\ldots,m-1.
\end{equation}
		Let $(\sigma_1,\ldots,\sigma_m)$ be a system of measures such that  $\sigma_j\in\mathcal{M}(\Delta_j)$, $j=1,\ldots,m$. We say that $(s_{1,1},\ldots,s_{1,m})=\mathcal{N}(\sigma_1,\ldots,\sigma_m)$, where
		\[  s_{1,1}=\sigma_1,\; s_{1,2}=\langle\sigma_1,\sigma_2 \rangle,\;s_{1,3}=\langle\sigma_1,\langle \sigma_2,\sigma_3\rangle \rangle,\, \ldots, \; s_{1,m}=\langle \sigma_1, \langle \sigma_2,\ldots,\sigma_m \rangle\rangle
		\]
		is the Nikishin system of measures generated by $(\sigma_1,\ldots,\sigma_m)$. The vector $(\widehat{s}_{1,1},\ldots,\widehat{s}_{1,m})$ is called the Nikishin system of functions.
	\end{definition}
	
	Notice that any sub-system of $(\sigma_1,\ldots,\sigma_m)$ of consecutive measures is also a generator of some Nikishin system. In the sequel, for $1\leq j\leq k\leq m$, we will write
	\[  s_{j,k} := \langle \sigma_j, \sigma_{j+1},\ldots, \sigma_k \rangle,\qquad s_{k,j} := \langle \sigma_k, \sigma_{k-1},\ldots, \sigma_j \rangle.
	\]
Without further notice, throughout the paper we consider that $m \geq 2$.

    Let $(\ZZ_+^m)^*$ be the set of all $m$-dimensional vectors with non-negative integer components, not all equal to zero. For $\n = (n_1,\ldots,n_{m})\in (\ZZ_+^m)^*$ we define $|\n|= n_1 + \cdots + n_{m}$ and $\eta_{\n,j} = n_1+\cdots+n_j$.
    \begin{definition}
        \label{df:HP:ly}
        Consider the Nikishin system $\mathcal{N}(\sigma_1,\ldots,\sigma_m)$ and $\n = (n_1,\ldots,n_{m}) \in (\ZZ_+^m)^*$. There exist polynomials $a_{\n,0}, a_{\n,1},\ldots,a_{\n,m}$, where $\deg a_{\n,j}\leq |\n|-1$, $j=0,1,\ldots,m-1,$ and $\deg a_{\n,m}\leq |\n|$, not all identically equal to zero, called multi-level (ML) Hermite-Padé polynomials of $\mathcal{N}(\sigma_1,\ldots,\sigma_m)$ with respect to $\n$, that verify
        \begin{equation}
            \label{df:HP:ly:1}
            \mathcal{A}_{\n,j}(z) := \left((-1)^j a_{\n,j} + \sum_{k=j+1}^m (-1)^k a_{\n,k}\widehat{s}_{j+1,k}\right)(z) = \mathcal{O}\left(\frac{1}{z^{n_{j+1}+1}}\right), \qquad z \to \infty,
        \end{equation}
        where $j=0,\ldots,m-1$ (the asymptotic expansion of $\mathcal{A}_{\n,j}$ at $\infty$ begins with $z^{-n_{j+1} -1}$, or higher). For completeness, set $\mathcal{A}_{\n,m} := (-1)^m a_{\n,m}$.
    \end{definition}

    We warn the reader that with our terminology in \cite[Problem A]{lysov20} the ML Hermite-Padé polynomials were defined with respect to the system $\mathcal{N}(\sigma_m,\ldots,\sigma_1)$. Initially, in \cite{LSJ19} ML Hermite-Pad\'e approximants were defined only for multi-indices of the form $(n,0,\ldots,0)$.

    Following Mahler's denomination \cite{mahler}, a multi-index $\n \in (\ZZ_+^m)^*$ is said to be normal if $\deg a_{{\n},j} = |\n| -1$, $j=0,\ldots,m-1,$ and $\deg a_{{\n},m} = |\n|$. The system of functions is said to be perfect when all the multi-indices are normal. In \cite[Theorem 1.1]{lysov20}, it was proved that Nikishin systems of functions are perfect for this approximation problem. Normality implies that the vector $(a_{\n,0},\ldots,a_{\n,m})$ is uniquely determined up to a multiplicative factor. In the sequel, we normalize this vector so that $a_{\n,m}$ has leading coefficient equal to one.

    The zeros of the forms $\mathcal{A}_{\n,j}$ have some interesting properties (see \cite[Lemma 2.2]{SLL22} or Subsection \ref{2.1} below). The form $\mathcal{A}_{\vec n,0}$ has no zero in $\CC \setminus \Delta_1$. For $j=1,\ldots, m,$ $\mathcal{A}_{\vec n,j}$ has exactly $\eta_{\n,j}$ zeros in $\CC \setminus \Delta_{j+1}$, $(\Delta_{m+1} = \varnothing )$, they are all simple and lie in $\mathring{\Delta}_j$ (the interior of $\Delta_j$ with the Euclidean topology of $\RR$). If $Q_{\vec n,j}$, $j= 1,\ldots,m,$ denotes the monic  polynomial whose roots are the simple zeros which $\mathcal{A}_{\vec n,j}$ has in $\mathring{\Delta}_j$, then
    \[
       	\frac{\mathcal{A}_{\n,j}(z)}{Q_{\n,j}(z)} = \mathcal{O}\left(\frac{1}{z^{\eta_{\n,j+1}+1}}\right) \in \mathcal{H}(\CC \setminus\Delta_{j+1}), \qquad z\to \infty, \qquad j=1,\ldots,m-1.
    \]
    For each $j = 0,\ldots,m-1$ the order of interpolation at infinity prescribed in \eqref{df:HP:ly:1} is exact.

    A sequence of distinct multi-indices $\Lambda \subset (\ZZ_+^m)^*$ is said to be a ray sequence if
    \[
        \lim_{\n \in \Lambda} n_j/|\n| = p_j, \qquad j=1,\ldots,m.
    \]
    (Consequetly, $\sum_{j=1}^m p_j= 1$.)
    The main results in \cite[Theorem 1.2, Corollary 1.1]{lysov20} give the asymptotic zero distribution and the logarithmic asymptotics of ray sequences of the polynomials $Q_{\n,j}$, $j=1,\ldots.m,$ when $\sigma_j' \neq 0$, $j=1,\ldots,m$ (this assumption can be relaxed to requiring that the measures $\sigma_j$ be regular, see \cite[Chap. 3]{Stahl_Totik} for the definition of regular measure). With the assumptions in Lysov's paper it is possible to prove ratio asymptotics (see \cite[Theorem 3.4, Corollary 3.5]{SLL22}).

     $\Lambda \subset (\ZZ_+^m)^*$ is said to be a straight ray sequence when
    \begin{equation}
        \label{const_ray}
        \frac{n_j}{|\n|}= p_j, \qquad j=1,\ldots,m, \qquad   \n\in\Lambda.
    \end{equation}
    Our goal is to give the strong asymptotics of sequences  of polynomials $Q_{\n,j}$, $a_{\n,j-1}$, $j=1,\ldots,m,$ and forms $\mathcal{A}_{\n,j}$, $j=0,\ldots,m$, for $\n$ belonging to a straight  ray sequence of multi-indices assuming that $\sigma_j \in \mathcal{S}(\Delta_j)$, $j=1,\ldots,m$.

\subsection{Statement of the main results}

    Let $ (p_1,\ldots,p_m) \in \RR_{\geq 0}^m$, $p_1 > 0.$  Set
    \begin{equation}
        \label{Ps}
        P_j = \sum_{k=1}^{j}p_k, \qquad j=1,\ldots,m.
    \end{equation}
    Notice that $P_j > 0$, $j=1,\ldots,m$. Let $\vec{\Delta} = (\Delta_1,\ldots,\Delta_m)$ be a collection of bounded intervals  $\Delta_j=[a_j,b_j]$  of the real line which satisfies \eqref{empty}. By $\mathcal{M}_1(\vec{\Delta})$ we denote the cone of all vector probability measures   $\vec{\mu} = (\mu_1,\ldots,\mu_m)$ such that $\supp (\mu_j) \subset \Delta_j$,  $j=1,\ldots,m$.

    Given a finite Borel measure $\mu$ with compact support its logarithmic potential is defined as
    \[ V^\mu(z) := \int\ln\frac{1}{|z-x|}\D \mu (x).
    \]
    In the present paper, we need the solution $\vec{\lambda} = (\lambda_1,\ldots,\lambda_m)\in \mathcal{M}_1(\vec{\Delta})$ of the   vector equilibrium problem
    \begin{equation}
        \label{eq:vector}
        V^{\lambda_j}(x) - \frac{1}{2}\left[ \frac{P_{j-1}}{P_j}\,V^{\lambda_{j-1}}(x) +  \frac{P_{j+1}}{P_j}\,V^{\lambda_{j+1}}(x)\right]
        \begin{cases}
            =\omega_j, & x\in\supp \lambda_j,\\
            \geq \omega_j, & x\in \Delta_j,
        \end{cases}
    \end{equation}
    for $j=1,\ldots,m$. Here, $V^{\lambda_{m+1}}\equiv V^{\lambda_0} \equiv 0$, $P_0 = P_{m+1} = 0$.

    The problem \eqref{eq:vector} is equivalent to
    \begin{equation}
    	\label{eq:vector*}
    	P_j^2 V^{\lambda_j}(x) - \frac{1}{2}\left[ P_jP_{j-1}\,V^{\lambda_{j-1}}(x) +  P_jP_{j+1}\,V^{\lambda_{j+1}}(x)\right]
        \begin{cases}
    	    =  P_j^2\omega_j, & x\in\supp \lambda_j,\\
    	  \geq P_j^2\omega_j, & x\in \Delta_j,
        \end{cases}
    \end{equation}
    for $j=1,\ldots,m$. It is well known that there exists a unique $\vec{\lambda} \in \mathcal{M}_1(\vec{\Delta})$ such that \eqref{eq:vector*} holds for each $j=1,\ldots,m$.      $\vec{\lambda}$ is called the unitary vector equilibrium measure and $\vec{\omega} := (\omega_1,\ldots,\omega_m)$ the vector equilibrium constant corresponding to the vector equilibrium problem with interaction matrix
    \begin{equation}
        \label{interaction}
        \mathcal{C} := \left(
        \begin{array}{ccccc}
            P_1^2 & -\frac{P_1 P_2}{2} & 0 & \cdots & 0 \\
            -\frac{P_2 P_1}{2} & P_2^2 & -\frac{P_2 P_3}{2} & \ddots & 0 \\
            0 & -\frac{P_3P_2}{2} & P_3^2  & \ddots & 0 \\
            \vdots & \ddots & \ddots & \ddots & \vdots \\
            0 & 0 & 0 & \cdots & P_m^2
        \end{array} \right)
    \end{equation}
    on the system of intervals $\vec{\Delta}$.

    The proof of the existence of unique $\vec{\lambda}$ and $\vec{\omega}$ in the present situation follows from results contained in \cite[Chap. 5, \S 4]{NS} (see also \cite[Section 4]{bel}) due to the fact that $\mathcal{C}$ is a symmetric positive definite matrix whose entries $c_{j,k}$ are non-negative whenever $\Delta_j \cap \Delta_k \neq \varnothing$. We  will need that $\supp (\lambda_j) = \Delta_j$, $j=1,\ldots,m$, in order to have equality in \eqref{eq:vector} throughout all $\Delta_j$. It will be shown in Lemma \ref{soportecompleto} that this is true whenever $p_1 > 0$ and $p_1 \geq \cdots \geq p_m$.

    In the sequel,
    \begin{equation}
        \label{compar}
        \Phi_j:= e^{-v_j},\qquad v_j := V^{\lambda_j} + i\widetilde{V}^{\lambda_j}, \qquad C_j = e^{\omega_j},
    \end{equation}
    where $\widetilde{V}^{\lambda_j}$ denotes the harmonic conjugate of $V^{\lambda_j}$ on $\CC\setminus\Delta_j$, $j=1,\ldots,m,$ which equals zero on $ (b_j,+\infty)$. The function $\Phi_j$ is a single-valued analytic function in $\CC \setminus \Delta_j$ (notice that the values of $-v_j$ vary by a multiple of $2\pi i$ as we circulate around $\Delta_j$ along a closed Jordan curve) with a simple pole at $\infty$. These functions will play the same role as $\Psi$ in Szeg\H{o}'s Theorem \ref{Szego}. The functions $\Phi_j^{\eta_{\n,j}}$ will be used to compare  the polynomials $Q_{\n,j}$. We still need to introduce the limiting functions.

    Assume that $\vec{\sigma} = (\sigma_1,\ldots,\sigma_m) \in \mathcal{S}(\vec{\Delta})$; that is, $\sigma_j \in \mathcal{S}(\Delta_j)$, $j=1,\ldots,m$. There exists a unique system of Szeg\H{o} functions $\vec{\mathsf{G}} = (\mathsf{G}_1,\ldots,\mathsf{G}_m)$ (see Theorem \ref{puntofijo} below), where $\mathsf{G}_k \in \mathcal{H}(\overline{\CC} \setminus \Delta_k)$, which verifies the system of boundary value equations
    \begin{equation}
    	\label{boundcond}
    	|\mathsf{G}_j(x)|^2 = \frac{\sqrt{|x -b_{j+1}||x-a_{j+1}|}\,\mathsf{G}_{j-1}(x) \mathsf{G}_{j+1}(x)}{\sqrt{(b_j -x)(x-a_j)} \sigma_j'(x)},\quad \mbox{a.e. on}\,\, \Delta_j, \quad j=1,\ldots,m.
    \end{equation}
    Here, $\mathsf{G}_0 \equiv \mathsf{G}_{m+1} \equiv 1$ and when $j=m$, $\sqrt{|x -b_{m+1}||x-a_{m+1}|}$ is substituted by $1$. Notice that $\mathsf{G}_j(z)$ is the Szeg\H{o} function with respect to a measure on $\Delta_j$ whose Radon-Nikodym devivative is (compare \eqref{boundcond} and \eqref{RN})
    \[  \frac{\sigma_j'(x) }{\sqrt{|x -b_{j+1}||x-a_{j+1}|}\,\mathsf{G}_{j-1}(x)\mathsf{G}_{j+1}(x)}.
    \]

    \begin{theorem}
        \label{th1}
        Let $\Lambda = \Lambda(p_1,\ldots,p_m)$ be a straight ray sequence of multi-indices such that \eqref{const_ray} takes place,  and $p_1 \geq \cdots \geq p_m$. Assume that $\vec{\sigma} \in \mathcal{S}(\vec{\Delta})$.
        Then
        \begin{equation}
 		\label{limfund***}
 		\lim_{\n\in\Lambda} \frac{Q_{\n,j}(z)}{\Phi_j^{\eta_{\n,j}}(z)} = \frac{\mathsf{G}_j(z)}{\mathsf{G}_j(\infty)}, \qquad j=1,\ldots,m,
 	\end{equation}
        uniformly on each compact subset of $\overline{\CC} \setminus \Delta_j$,  where $\vec{\mathsf{G}} = (\mathsf{G}_1,\ldots,\mathsf{G}_m)$ is the vector of Szeg\H{o} functions whose components are determined by the boundary conditions  \eqref{boundcond} and the $\Phi_j$ are as defined in \eqref{compar} constructed from the vector equilibrium problem associated with the given vector $(p_1,\ldots,p_m)$. The normalizing constants $\kappa_{\n,j}$, $j=1,\ldots,m,$  defined in \eqref{def:kappa} verify
        \begin{equation}
        	\label{kappas}
        	\lim_{n \in \Lambda} \frac{\kappa_{\n,j}}{C_j^{\eta_{\n,j}}} = \frac{\mathsf{G}_j(\infty)}{\sqrt{2\pi}},
        \end{equation}
        where the $C_j$ were given in \eqref{compar}. Additionally, the ML Hermite-Pad\'e polynomials satisfy
        	\begin{equation}
        		\label{MLasin}
        		\lim_{\n\in\Lambda} \frac{a_{\n,j}(z)}{\Phi_m^{|\n|}(z)} = \frac{\mathsf{G}_m(z)}{\mathsf{G}_m(\infty)}\widehat{s}_{m,j+1}(z), \qquad j=0,\ldots,m-1,
	\end{equation}
	 uniformly on compact subsets of $\overline{\CC} \setminus \Delta_m$.
    \end{theorem}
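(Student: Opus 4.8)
The plan is to normalize $Q_{\n,j}$ by the conjectured factor $\Phi_j^{\eta_{\n,j}}$, prove that the resulting family is normal (uniformly bounded above and below on compact subsets of $\overline{\CC}\setminus\Delta_j$), show that every limit along a subsequence solves the boundary value system \eqref{boundcond}, and invoke the uniqueness part of Theorem~\ref{puntofijo} to identify the limit with $\mathsf{G}_j/\mathsf{G}_j(\infty)$. A preliminary observation organizes everything: since $\mathcal{A}_{\n,m}=(-1)^m a_{\n,m}$ is monic of degree $|\n|=\eta_{\n,m}$ and has exactly $\eta_{\n,m}$ zeros, all of them simple and in $\mathring{\Delta}_m$, it coincides with $Q_{\n,m}$; hence $a_{\n,m}=Q_{\n,m}$, and \eqref{limfund***}, \eqref{kappas} and \eqref{MLasin} all revolve around this single leading object, which I would treat first and then propagate.

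Second, I would convert the interpolation conditions \eqref{df:HP:ly:1} into a closed ladder of orthogonality relations. Using the recursion $\widehat s_{j,k}(z)=\int\frac{\widehat s_{j+1,k}(x)}{z-x}\,\D\sigma_j(x)$ and peeling polynomial parts off the products $a_{\n,k}\widehat s_{j,k}$, the relations \eqref{df:HP:ly:1} reveal that $Q_{\n,m}=a_{\n,m}$ is exactly the type~\textsc{ii} multiple orthogonal polynomial of the Nikishin system $\mathcal{N}(\sigma_m,\ldots,\sigma_1)$ with multi-index $(n_m,\ldots,n_1)$ (so for $j=m$ one is in the situation of \cite{sasha99}, here in the generality of the interaction matrix \eqref{interaction}), and that each of $a_{\n,0},\ldots,a_{\n,m-1}$, each $Q_{\n,j}$, and each function of the second kind $\mathcal{A}_{\n,j}/Q_{\n,j}$ is obtained from $Q_{\n,m}$ through explicit Cauchy-type integrals against the $\sigma_k$. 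One then rewrites $(Q_{\n,1},\ldots,Q_{\n,m})$ as governed by a single chain of orthogonality relations with varying weights, each weight a ratio of a density $\sigma_k'$ by products of neighbouring polynomials --- the finite-$\n$ shadow of the boundary system \eqref{boundcond}.

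The main obstacle is normality of the normalized family. Dividing $Q_{\n,j}$ by $\Phi_j^{\eta_{\n,j}}$, and the accompanying second-kind functions by the complementary powers of $\Phi_{j\pm1}$ and by powers of $C_{j\pm1}$, the equilibrium relations \eqref{eq:vector} --- which on $\Delta_j$ tie the boundary modulus of $\Phi_j^{\eta_{\n,j}}$ to those of $\Phi_{j-1}^{\eta_{\n,j-1}}$ and $\Phi_{j+1}^{\eta_{\n,j+1}}$ up to the constant $C_j^{-2|\n|}$ (recall \eqref{compar}) --- make these ratios bounded in the $L^2$ sense, with the Szeg\H{o}-type $\sqrt{(b_j-x)(x-a_j)}$ factors of \eqref{boundcond} coming from comparing $\Phi_j$ with the monic growth of $Q_{\n,j}$, as in the classical identities \eqref{RN}. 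Upper bounds on compacts and control of the location and simplicity of the zeros of $Q_{\n,j}$ come from the $n$-th root and ratio asymptotics already in \cite{lysov20} and \cite{SLL22}, together with the exactness of the order of interpolation noted after \eqref{df:HP:ly:1}. The delicate point is the lower bound, i.e.\ non-degeneracy of subsequential limits; I expect it to require a bootstrap through the levels $j=1,\ldots,m$ along the ladder above, and it is here that $p_1\geq\cdots\geq p_m$ enters: by Lemma~\ref{soportecompleto} it forces $\supp\lambda_j=\Delta_j$, so equality holds in \eqref{eq:vector} on all of $\Delta_j$ and $\Phi_j^{\eta_{\n,j}}$ genuinely measures the size of $Q_{\n,j}$ throughout $\Delta_j$.

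With normality in hand, I would select a subsequence along which every normalized quantity converges locally uniformly, pass to the limit in the ladder (dominated convergence for the varying weights, using $\vec{\sigma}\in\mathcal{S}(\vec{\Delta})$ to control the logarithms $\ln\sigma_j'$), and verify that the limits $F_j$ of $Q_{\n,j}/\Phi_j^{\eta_{\n,j}}$ are analytic and non-vanishing on $\overline{\CC}\setminus\Delta_j$, normalized correctly at $\infty$, and satisfy \eqref{boundcond}; the uniqueness in Theorem~\ref{puntofijo} then forces $F_j=\mathsf{G}_j/\mathsf{G}_j(\infty)$ for every subsequence, which is \eqref{limfund***}. Evaluating \eqref{limfund***} at $z=\infty$ (where $Q_{\n,j}$ is monic and $\Phi_j$ has a known positive leading coefficient), together with the identity \eqref{def:kappa} relating $\kappa_{\n,j}$ to the pertinent $L^2$-norm and with $C_j=e^{\omega_j}$, yields \eqref{kappas}. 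For \eqref{MLasin}, I would start from $a_{\n,m}=Q_{\n,m}$ and the now-proved case $j=m$ of \eqref{limfund***}, insert them into the Cauchy-integral representations of $a_{\n,j}$ obtained in the second step, and pass to the limit; the kernel paired with $\widehat s_{m,j+1}$ survives and produces the factor $\widehat s_{m,j+1}(z)$, with local uniformity on $\overline{\CC}\setminus\Delta_m$ inherited from \eqref{limfund***}.
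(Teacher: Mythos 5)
Your overall architecture --- normalize, extract a subsequential limit, identify it via the boundary system \eqref{boundcond} and the uniqueness in Theorem~\ref{puntofijo} --- is a natural first instinct, but it is \emph{not} what the paper does, and the place where your plan is thin is precisely the reason the paper takes a different route. You correctly flag the lower bound (non-degeneracy of subsequential limits of $Q_{\n,j}/\Phi_j^{\eta_{\n,j}}$) as ``delicate,'' but you do not actually prove it, and I do not believe the ingredients you invoke suffice. The ``$L^2$ bounds'' you propose come from the normalization $\int q_{\n,j}^2\,|h_{\n,j}\D\sigma_j|/|Q_{\n,j-1}Q_{\n,j+1}|=1$, whose weight depends on the neighbouring $Q_{\n,j\pm1}$; turning this into a two-sided local uniform bound on $Q_{\n,j}/\Phi_j^{\eta_{\n,j}}$ requires already knowing those neighbours are controlled, which is a circularity a ``bootstrap through the levels'' does not break on its own, because the chain is not triangular: every $\Delta_j$ with $1<j<m$ talks to both neighbours. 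Likewise, $n$-th root and ratio asymptotics from \cite{lysov20,SLL22} control $|Q_{\n,j}|^{1/|\n|}$ and quotients $Q_{\n+\vec e_l,j}/Q_{\n,j}$, neither of which gives an a priori bound on the strong ratio $Q_{\n,j}/\Phi_j^{\eta_{\n,j}}$ itself.

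The paper's proof circumvents exactly this obstruction. Instead of establishing precompactness of the normalized fixed-point sequence directly, it introduces the operator $T_{\n}$ on $H^{+,\n}$ (normalized vector polynomials), shows via Lemma~\ref{prop5} and Lemma~\ref{prop2} that for any $\varepsilon>0$ and $|\n|$ large the set $\omega_{\varepsilon,\n}$ (an $\varepsilon$-neighbourhood of $\vec{\mathsf{G}}$, intersected with $H^{+,\n}$) is nonempty, compact and convex, and that $T_{\n}$ maps $\omega_{\varepsilon,\n}$ into itself (using that $\vec{T}$ becomes strictly contractive on the annulus $\tilde\omega_{\varepsilon,\varepsilon'}$ around its fixed point, together with a uniform estimate \eqref{desig} of the defect between $T_{\n}$ and $\vec{T}$). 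Brouwer's fixed point theorem then places \emph{the} fixed point of $T_{\n}$ --- which by Lemma~\ref{unicoQn} is exactly the normalized $\vec{Q}_{\n}$ --- inside $\omega_{\varepsilon,\n}$, yielding \eqref{fixed} without ever needing a priori compactness of the sequence $Q_{\n,j}/\Phi_j^{\eta_{\n,j}}$. This is the conceptual content you are missing, and it is not a cosmetic difference: it is what replaces the normality step you cannot supply.

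Two smaller remarks. First, $a_{\n,m}=Q_{\n,m}$ is the paper's \emph{definition} (end of Subsection~\ref{2.1}), not a derived identity, though your reasoning for why it must be so is correct. Second, for \eqref{MLasin} the paper simply invokes the known convergence $a_{\n,j}/a_{\n,m}\to\widehat s_{m,j+1}$ from \cite[Prop.~1.2]{lysov20} and \cite[Th.~1.1]{SLL22} and multiplies by the $j=m$ case of \eqref{limfund***}; your plan to pass to the limit inside Cauchy-type integral representations of $a_{\n,j}$ is plausible but more laborious, since those polynomials arise from iterated difference quotients against $\sigma_{j+1},\ldots,\sigma_m$ rather than from a single Cauchy integral over $\Delta_m$, and carrying uniform convergence through that nested structure would need to be justified.
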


    The asymptotic formulas satisfied by the forms $\mathcal{A}_{\n,j}$, $j=0,\ldots,m-1,$ are more entangled and we give them in Corollary \ref{asym_forms}.

    The proof of Theorem \ref{th1} makes use of fixed point theorems. This method was employed by A.I. Aptekarev in  \cite{sasha89} for the study of the strong asymptotics of type \textsc{ii} Hermite-Pad\'e polynomials of Angelesco systems of two measures and used again in \cite{sasha99}   to obtain the strong asymptotics of type \textsc{ii} Hermite-Pad\'e polynomials of Nikishin systems with an arbitrary number of measures. Here, we adapt the scheme proposed in \cite{sasha99}  to the case of ML Hermite-Pad\'e approximation, simplify some proofs, and correct some statements.

    Type II Hermite-Pad\'e polynomials and ML Hermite-Pad\'e polynomials in general do not coincide, but for the multi-indices considered in Theorem \ref{th1} for which $n_1 \geq n_2 \geq \cdots \geq n_m$ their construction is equivalent as follows from V.G. Lysov's result \cite[Proposition 1.1]{lysov20}. In \cite{sasha99} the measures are
    assumed to be absolutely continuous with respect  to Lebesgue measure and the multi-indices are of the form $\n = (n,n,\ldots,n), n \geq 1$. Therefore, our result covers that of A.I. Aptekarev with weaker assumptions. Theorem \ref{th1} extends Szeg\H{o}'s classical result on the strong asymptotic of orthogonal polynomials to the case when $m > 1$.

    Let $\mathcal{C}({\vec{\Delta}})$ be the linear space of all vector functions $\vec{f} = (f_1,\ldots,f_m)$ such that $f_j$ is continuous on $\Delta_{j-1} \cup \Delta_{j+1}$, $(\Delta_0 = \Delta_{m+1} = \varnothing)$. Set
    \[ \|\vec{f}\|_{\vec{\Delta}} :=  \max_{1\leq j\leq m} \left\{ \| f_j \|_{\Delta_{j-1}\cup \Delta_{j+1}}\right\}
    \]
    where $\|\cdot\|_{\Delta_{j-1}\cup \Delta_{j+1}}$ denotes the sup norm on the indicated set. It is well known that
    $(\mathcal{C}({\vec{\Delta}}),\|\cdot\|_{\vec{\Delta}})$ is a Banach space.

    Let $\mathcal{C}^+({\vec{\Delta}})$ denote the set of all vector functions $\vec{f} = (f_1,\ldots,f_m)$ such that $f_j$ is positive and continuous on $\Delta_{j-1} \cup \Delta_{j+1}$.  On $\mathcal{C}^+({\vec{\Delta}})$ we define the distance
    \[ d(\vec{f},\vec{g}) =\max_{1\leq j\leq m} \left\{ \| \ln(f_j/g_j) \|_{\Delta_{j-1}\cup \Delta_{j+1}}\right\}.
    \]
    Since the logarithm establishes a homeomorphism between $\mathcal{C}^+({\vec{\Delta}})$ and $\mathcal{C}({\vec{\Delta}})$, it is easy to verify that
    $(\mathcal{C}^+({\vec{\Delta}}),d)$ is a complete metric space. Furthermore, we have an important relation between the distance $d(\cdot,\cdot)$  and the norm $\|\cdot\|_{\vec{\Delta}}$ both acting on $\mathcal{C}^+({\vec{\Delta}})$; namely, for any sequence of vector functions $(\vec{g}_n)_{n \geq 0} \subset  \mathcal{C}^+({\vec{\Delta}})$  and $\vec{g} \in \mathcal{C}^+({\vec{\Delta}})$, we have
    \begin{equation}
    	\label{equiv}
    	\lim_{n \to \infty} \|\vec{g}_{n}-\vec{g}\|_{\vec{\Delta}} = 0 \quad  \Leftrightarrow \quad \lim_{n \to \infty} d(\vec{g}_{n},\vec{g}) =0.
    \end{equation}

    \begin{definition}
        \label{df:op:Tw}
        Let $\vec{w} = (w_1,\ldots,w_m)$ be a vector of non-negative functions such that $\ln w_j \in L_1(\eta_{\Delta_j})$, $j=1,\ldots,m$. Define
    \[  \vec{T}_{\vec{w}} = (T_1,\ldots,T_m): \mathcal{C}^+({\vec{\Delta}}) \longrightarrow \mathcal{C}^+({\vec{\Delta}}),
    \]
    where for each $\vec{f} = (f_1,\ldots,f_m)$, $T_j \vec{f}$ is the Szeg\H{o} function on $\overline{\CC} \setminus \Delta_j$ whose boundary values on $\Delta_j$ verify
    \begin{equation}
        \label{boundary}
        |T_j\vec{f}(x)|^2 = \frac{f_{j-1}(x) f_{j+1}(x)}{w_j(x)}, \qquad \mbox{a.e. on}\,\, \Delta_j, \qquad j=1,\ldots,m
    \end{equation}
    ($f_0  \equiv f_{m+1} \equiv 1$).
    \end{definition}

    Recall that Szeg\H{o} functions are positive on the real line outside of the support of the defining measure. Let ${\overline{m}} := \lceil m/2 \rceil$ be the smallest integer greater or equal to $m/2$. As usual,  $\vec{T}_{\vec{w}}^j$  denotes the composition of $\vec{T}_{\vec{w}}$ with itself $j$ times.

    \begin{theorem}
        \label{puntofijo}
        The map $\vec{T}_{\vec{w}}^{\overline{m}}$ is a contraction on the complete metric space $( {C}^+({\vec{\Delta}}),d)$; more precisely,
        \begin{equation}
            \label{contraction}
            d(\vec{T}_{\vec{w}}^{\overline{m}}\vec{f},\vec{T}_{\vec{w}}^{\overline{m}}\vec{g}) \leq  \gamma_m\, d( \vec{f}, \vec{g})
        \end{equation}
        where
        \[ \gamma_m :=
        \left\{
        \begin{array}{cc}
        (2^{\overline{m}} -1)/2^{\overline{m}}, & m \,\,\mathrm{even}, \\
        (2^{\overline{m}} -2)/2^{\overline{m}}, & m \,\,\mathrm{odd}.
        \end{array}
        \right.
        \]
        The map $\vec{T}_{\vec{w}}$ has a unique fixed point.
    \end{theorem}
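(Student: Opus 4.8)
The plan is to prove the contraction estimate \eqref{contraction} for $\vec{T}_{\vec{w}}^{\overline{m}}$ and then deduce everything else from Banach's fixed point theorem; the unique fixed point of $\vec{T}_{\vec{w}}$ follows formally because $\vec{T}_{\vec{w}}$ commutes with its powers. The engine of the argument is the representation of a Szeg\H{o} function as an integral against harmonic measure. Fix $\vec{f}\in\mathcal{C}^+(\vec{\Delta})$. By Definition \ref{df:op:Tw}, $T_j\vec{f}$ is the Szeg\H{o} function of the measure $\nu_j$ on $\Delta_j$ with $\sqrt{(b_j-x)(x-a_j)}\,\nu_j'(x)=w_j(x)/(f_{j-1}(x)f_{j+1}(x))$ (with $f_0\equiv f_{m+1}\equiv 1$); this weight has integrable logarithm on $\Delta_j$ because $f_{j-1},f_{j+1}$ are positive and continuous on $\Delta_j$ while $\ln w_j\in L_1(\eta_{\Delta_j})$, so $T_j\vec{f}$ is well defined, zero-free and holomorphic on $\overline{\CC}\setminus\Delta_j$, and positive on $\RR\setminus\Delta_j$. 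Taking real parts in the integral defining $\mathsf{G}$ yields, for real $z\notin\Delta_j$,
\[
\ln T_j\vec{f}(z)=\frac12\int_{\Delta_j}\ln\frac{f_{j-1}(x)f_{j+1}(x)}{w_j(x)}\,\D\omega_j(z,x),\qquad \D\omega_j(z,x):=\frac{|R_j(z)|}{\pi\,|x-z|}\,\D\eta_{\Delta_j}(x),
\]
where $R_j(z):=\sqrt{(z-a_j)(z-b_j)}$ and $\omega_j(z,\cdot)$ is a probability measure on $\Delta_j$ (the harmonic measure of $\overline{\CC}\setminus\Delta_j$ at $z$). Hence, given also $\vec{g}\in\mathcal{C}^+(\vec{\Delta})$ and writing $u_j:=\ln(f_j/g_j)$ with the convention $u_0\equiv u_{m+1}\equiv 0$, one obtains for real $z\notin\Delta_j$ the basic identity
\[
\ln\frac{T_j\vec{f}(z)}{T_j\vec{g}(z)}=\frac12\int_{\Delta_j}(u_{j-1}(x)+u_{j+1}(x))\,\D\omega_j(z,x).
\]

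Next I would iterate this identity. Since $\vec{T}_{\vec{w}}$ maps $\mathcal{C}^+(\vec{\Delta})$ into itself, applying it with $\vec{T}_{\vec{w}}^{\,k-1}\vec{f},\vec{T}_{\vec{w}}^{\,k-1}\vec{g}$ in place of $\vec{f},\vec{g}$ gives, for $u_j^{(k)}:=\ln((\vec{T}_{\vec{w}}^{\,k}\vec{f})_j/(\vec{T}_{\vec{w}}^{\,k}\vec{g})_j)$ and $u_j^{(0)}:=u_j$,
\[
u_j^{(k)}(z)=\frac12\int_{\Delta_j}(u_{j-1}^{(k-1)}(x)+u_{j+1}^{(k-1)}(x))\,\D\omega_j(z,x),\qquad k\ge 1,
\]
with $u_0^{(k)}\equiv u_{m+1}^{(k)}\equiv 0$; this is legitimate because for $k\ge 1$ the function $u_{j\pm1}^{(k)}$ is defined on $\RR\setminus\Delta_{j\pm1}\supset\Delta_j$, while $u_{j\pm1}^{(0)}$ has $\Delta_j$ among the two intervals of its domain $\Delta_{(j\pm1)-1}\cup\Delta_{(j\pm1)+1}$. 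Expanding $\overline{m}$ times one gets
\[
u_j^{(\overline{m})}(z)=\sum_{\gamma}\frac1{2^{\overline{m}}}\int\!\cdots\!\int u_{j_{\overline{m}}}^{(0)}(x_{\overline{m}})\,\D\omega_{j_{\overline{m}-1}}(x_{\overline{m}-1},x_{\overline{m}})\cdots\D\omega_{j_0}(z,x_1),
\]
where $j_0=j$, $x_i\in\Delta_{j_{i-1}}$, and $\gamma$ runs over lattice paths $j=j_0,j_1,\ldots,j_{\overline{m}}$ with $|j_{i+1}-j_i|=1$; since $u_0^{(\cdot)}\equiv u_{m+1}^{(\cdot)}\equiv 0$, every term of a path that ever steps outside $\{1,\ldots,m\}$ vanishes, so effectively $\gamma$ ranges over the $N_j$ such paths that stay inside $\{1,\ldots,m\}$. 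As each $\omega$ is a probability measure, the weights above are nonnegative and sum to $W_j:=N_j/2^{\overline{m}}\le 1$, and for a surviving path $x_{\overline{m}}\in\Delta_{j_{\overline{m}-1}}$, which, since $j_{\overline{m}-1}=j_{\overline{m}}\pm1$, is one of the intervals $\Delta_{j_{\overline{m}}-1},\Delta_{j_{\overline{m}}+1}$ forming the domain of $u_{j_{\overline{m}}}^{(0)}$, where $|u_{j_{\overline{m}}}^{(0)}|\le d(\vec{f},\vec{g})$. Therefore $|u_j^{(\overline{m})}(z)|\le W_j\,d(\vec{f},\vec{g})$ for every real $z\notin\Delta_j$, in particular on $\Delta_{j-1}\cup\Delta_{j+1}$, and taking the maximum over $j$,
\[
d(\vec{T}_{\vec{w}}^{\,\overline{m}}\vec{f},\vec{T}_{\vec{w}}^{\,\overline{m}}\vec{g})\le\Bigl(\max_{1\le j\le m}W_j\Bigr)\,d(\vec{f},\vec{g}).
\]

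It remains to evaluate $\max_j W_j$, which is elementary lattice-path combinatorics. A path of length $\overline{m}$ started at $j$ leaves $\{1,\ldots,m\}$ exactly when it reaches $0$ or $m+1$. Since $\overline{m}\ge m/2$, every $j$ is within $\overline{m}$ steps of $0$ or of $m+1$, so the corresponding monotone path exits; hence $N_j\le 2^{\overline{m}}-1$ for all $j$, with equality when $m$ is even and $j=m/2$ (there $m+1$ is out of reach, and only the all-down path exits). When $m$ is odd, $\overline{m}>m/2$, so a non-midpoint $j$ reaches the nearer endpoint in fewer than $\overline{m}$ steps and the $\ge 1$ remaining free steps produce at least two exiting paths, while the midpoint $j=(m+1)/2$ exits both by the all-down and the all-up path; thus $N_j\le 2^{\overline{m}}-2$ for all $j$, with equality at the midpoint. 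Consequently $\max_j W_j=(2^{\overline{m}}-1)/2^{\overline{m}}$ for $m$ even and $(2^{\overline{m}}-2)/2^{\overline{m}}$ for $m$ odd, i.e. $\max_j W_j=\gamma_m<1$, which proves \eqref{contraction}. Then $\vec{T}_{\vec{w}}^{\,\overline{m}}$ is a contraction of the complete metric space $(\mathcal{C}^+(\vec{\Delta}),d)$, so by Banach's fixed point theorem it has a unique fixed point $\vec{f}^{*}$. Since $\vec{T}_{\vec{w}}\circ\vec{T}_{\vec{w}}^{\,\overline{m}}=\vec{T}_{\vec{w}}^{\,\overline{m}}\circ\vec{T}_{\vec{w}}$, the vector $\vec{T}_{\vec{w}}\vec{f}^{*}$ is again a fixed point of $\vec{T}_{\vec{w}}^{\,\overline{m}}$, whence $\vec{T}_{\vec{w}}\vec{f}^{*}=\vec{f}^{*}$; and any fixed point of $\vec{T}_{\vec{w}}$ is a fixed point of $\vec{T}_{\vec{w}}^{\,\overline{m}}$ and therefore equals $\vec{f}^{*}$. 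Thus $\vec{T}_{\vec{w}}$ has a unique fixed point.

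The step I expect to require the most care is the bookkeeping around the iterated identity: one must confirm that when a surviving path reaches level $0$ at index $j_{\overline{m}}$, the argument of $u_{j_{\overline{m}}}^{(0)}$ indeed lies in its domain $\Delta_{j_{\overline{m}}-1}\cup\Delta_{j_{\overline{m}}+1}$ (it does, being a harmonic-measure variable on $\Delta_{j_{\overline{m}-1}}$), and that the recursion for $u_j^{(k)}$ is valid at each level because $\vec{T}_{\vec{w}}$ preserves $\mathcal{C}^+(\vec{\Delta})$. The representation of $\ln\mathsf{G}$ against harmonic measure and the fact that $\omega_j(z,\cdot)$ has unit mass are classical but should be cited precisely, and the even/odd case distinction must be handled carefully to pin down $\gamma_m$ exactly.
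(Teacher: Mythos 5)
Your proof is correct and takes essentially the same route as the paper: you encode each application of $T_j$ as an average against harmonic measure (the explicit form of the paper's harmonic-extension operators $P_{j,k}$), iterate $\overline{m}$ times, bound each surviving lattice path (the paper's ``chains'') by $d(\vec f,\vec g)$ via the maximum principle and unit total mass, and identify $\gamma_m=\max_j N_j/2^{\overline m}$ where $N_j$ counts length-$\overline m$ paths staying in $\{1,\ldots,m\}$; uniqueness then follows, as in the paper, from Banach's theorem applied to $\vec T^{\overline m}$. The only cosmetic difference is that the paper counts which length-$(\overline m-1)$ chains admit a single versus a double extension, whereas you count paths that escape to $0$ or $m+1$; both yield the same even/odd case analysis and the same $\gamma_m$.
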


In \cite[Proposition 1.1]{sasha99} the author proves that
$d(\vec{T}_{\vec{w}} \vec{f},\vec{T}_{\vec{w}} \vec{g}) <
 d( \vec{f}, \vec{g})$ showing that $\vec{T}_{\vec{w}}$ is non expansive and draws the erroneous conclusion that
 $\vec{T}_{\vec{w}}$ is contractive. Theorem \ref{puntofijo} corrects this last statement.

\subsection{Outline of the paper} In Subsections 2.1 and 2.2, we present some properties of the forms $\mathcal{A}_{\n,j}$ and of the polynomials $Q_{\n,j}$ which carry their zeros.  In particular, it is shown that for each $\n \in (\ZZ_+^m)^*$ the polynomials $Q_{\n,j}$ are orthogonal with respect to certain varying measures (see \eqref{ortvar2}). Notice that in the orthogonality relations the polynomials $Q_{\n,j}$ appear at the same time as orthogonal and in the denominator of the varying part of the measure (with the subindices shifted). This inconvenience is settled introducing
an operator $\tilde{T}_{\n}$ on a set of vector polynomials, denoted $\mathcal{P}_{\n}$, for which $(Q_{\n,1}, \ldots, Q_{\n,m})$ is a unique fixed point. In Subsection 2.3 we normalize the orthogonal polynomials and the varying measures  allowing us to use some known results of the theory of orthogonal polynomials with respect to varying measures developed in \cite{lagober98,bernardo_lago1,LGLago} to obtain in Subsection 2.5 certain asymptotic formulas for multiple orthogonal polynomials which are capital in the proof of Theorem \ref{th1} for the following reasons. Lemma \ref{prop5} allows to reduce the proof of the strong asymptotics on compact subsets of $\CC \setminus \Delta_j$ to the strong asymptotics on $\Delta_{j-1} \cup \Delta_{j+1}$ (since \eqref{limden}
implies \eqref{limfund*}) and hints the introduction of the operator $\vec{T}_{\vec{w}}$ (with $\vec{w}$ as in \eqref{weight:dir}) whose fixed point determines the limiting functions of the strong limits of Theorem \ref{th1}. In order to show that the fixed point of $\vec{T}_{\vec{w}}$  attracts fixed points of the operators $\tilde{T}_{\n}$ a local fixed point theorem due
to Brouwer is used and Lemma \ref{prop2} is needed to guarantee that the neighborhoods on which Brouwer's Theorem is applied are non empty.  In Subsection 2.4 the solution of an equilibrium problem, which allows to construct the comparison functions, is studied. Theorem \ref{puntofijo} is proved in Subsection 3.1 with assumptions more general than needed in this paper having in mind future applications. Theorem \ref{th1} is obtained in Subsection 3.2. Subsections 3.3 and 3.4  contain applications of the main result  to give precise estimates of the rate of convergence of ML Hermite-Pad\'e approximations and the strong asymptotics of Cauchy biorthogonal polynomials, respectively.

\section{Auxiliary results}

\subsection{Some useful relations} \label{2.1} The polynomials $Q_{\n,j}$ play an important role throughout the paper so for the benefit of the reader we summarize their construction and some of its properties (for details, see \cite[Lemmas 2.1, 2.2, and 3.1]{SLL22}).

Let $(s_{1,1},\ldots,s_{1,m}) =\mathcal{N}(\sigma_1,\ldots,\sigma_m)$ be given. Assume that there exist polynomials with real coefficients $a_0,\ldots, a_m$ and a monic polynomial $Q$ with real coefficients whose roots lie in $\CC \setminus\Delta_1$ such that
\begin{equation*}
    \frac{\mathcal{A}_0(z)}{Q(z)}\in\mathcal{H}(\CC \setminus\Delta_1) \quad \textrm{ and }\quad \frac{\mathcal{A}_0(z)}{Q(z)}=\mathcal{O}\left(\frac{1}{z^N}\right),\quad z\rightarrow \infty,
\end{equation*}
where $\displaystyle \mathcal{A}_0:= a_0 +\sum_{k=1}^m a_k\widehat{s}_{1,k}$ and $N\geq 1$. Let $\displaystyle \mathcal{A}_1:= a_1 + \sum_{k=2}^m a_k\widehat{s}_{2,k}$. Using Cauchy's integral formula on a contour surrounding $\Delta_1$ and Fubini's Theorem it readily follows (see \cite[Lemma 2.1]{SLL22}) that
\begin{equation} \label{formred}
    \frac{\mathcal{A}_0(z)}{Q(z)} = \int \frac{\mathcal{A}_1(x)}{z-x}\frac{\D \sigma_1(x)}{Q(x)},
\end{equation}
and if $N\geq 2$, from Cauchy's theorem, Fubini's theorem, and Cauchy's integral formula, we have
\begin{equation}
    \label{A:orth}
    \int x^\nu \mathcal{A}_1(x)\frac{\D \sigma_1(x)}{Q(x)}=0,\qquad \nu=0,1,\ldots, N-2.
\end{equation}
In particular, $\mathcal{A}_1$ has at least $N-1$ sign changes in $\mathring{\Delta}_1$ or it is identically equal to zero.

Fix $\n \in (\ZZ_+^m)^*$. The forms $\mathcal{A}_{\n,j}$ in Definition \ref{df:HP:ly} are symmetric with
respect to the real line (that is, $\mathcal{A}_{\n,j}(\overline{z}) = \overline{\mathcal{A}_{\n,j}({z})}$); therefore, their roots come in conjugate pairs. Fix $\n \in (\ZZ_+^m)^*$. If $\mathcal{A}_{\n,0}$ has a zero in $\CC \setminus \Delta_1$ take $Q_{\n,0}$ as a monic polynomial of degree $\geq 1$ with real coefficients whose roots are zeros of $\mathcal{A}_{\n,0}$ in $\CC \setminus \Delta_1$; otherwise, take $Q_{\n,0} \equiv 1$. Using \eqref{formred}-\eqref{A:orth} and  \eqref{df:HP:ly:1} for $j=0$, it follows that
\[
  \frac{\mathcal{A}_{\n,0}(z)}{Q_{\n,0}(z)} = \int \frac{\mathcal{A}_{\n,1}(x)}{z-x}\frac{\D \sigma_1(x)}{Q_{\n,0}(x)},
\]
and
\[
  \int x^\nu \mathcal{A}_{\n,1}(x)\frac{\D \sigma_1(x)}{Q_{\n,0}(x)}=0,\qquad \nu=0,1,\ldots, \eta_{\n,1}-1.
\]
In the second relation there is at least one more orthogonality relation if either the order in \eqref{df:HP:ly:1} for $j=0$ is higher than $\eta_{\n,1}$ or $\deg Q_{\n,0} \geq 1$.

Therefore, there exists a monic polynomial with real coefficients $Q_{\n,1}$ of degree $\geq \eta_{\n,1}$ (or strictly greater if either the order in \eqref{df:HP:ly:1} for $j=0$ is higher than $\eta_{\n,1}$ or $\deg Q_{\n,0} \geq 1$) whose roots are zeros of $\mathcal{A}_{\n,1}$ in $\CC \setminus \Delta_2$ such that $\mathcal{A}_{\n,1}/Q_{\n,1}$ is holomorphic in $\CC \setminus \Delta_2$. Using \eqref{formred}-\eqref{A:orth} and \eqref{df:HP:ly:1} for $j=1$, it follows that
\[
  \frac{\mathcal{A}_{\n,1}(z)}{Q_{\n,1}(z)} = \int \frac{\mathcal{A}_{\n,2}(x)}{z-x}\frac{\D \sigma_2(x)}{Q_{\n,1}(x)}
\]
and
\[
  \int x^\nu \mathcal{A}_{\n,2}(x)\frac{\D \sigma_2(x)}{Q_{\n,1}(x)}=0,\qquad \nu=0,1,\ldots, \eta_{\n,2}-1.
\]
There is at least one more orthogonality relation if either the order in \eqref{df:HP:ly:1} for $j=0,1$ is higher than prescribed,  $\deg Q_{\n,0} \geq 1$, or $\deg Q_{\n,1} > \eta_{\n,1}$.

One can repeat the arguments proving that there exist monic polynomials $Q_{\n,j}$, $\deg Q_{\n,j} \geq \eta_{\n,j}$, $j=0,\ldots,m-1$, $(\eta_{\n,0} = 0)$ with real coefficients whose roots lie in $\CC \setminus \Delta_{j+1}$ and are zeros of $\mathcal{A}_{\n,j}$, such that
\begin{equation}
    \label{formrec}
    \frac{\mathcal{A}_{\n,j}(z)}{Q_{\n,j}(z)} = \int \frac{\mathcal{A}_{\n,j+1}(x)}{z-x}\frac{\D \sigma_{j+1}(x)}{Q_{\n,j}(x)},
\end{equation}
and
\begin{equation}
    \label{ortvar}
    \int x^\nu \mathcal{A}_{\n,j+1}(x) \frac{\D \sigma_{j+1}(x)}{Q_{\n,j}(x)}=0,\qquad \nu=0,1,\ldots, \eta_{\n,j+1}-1,
\end{equation}
with an additional orthogonality relation if for some $j$ either the order in \eqref{df:HP:ly:1} is greater than that prescribed or $\deg Q_{\n,j} > \eta_{\n,j}$. Since $\mathcal{A}_{\n,m} = (-1)^m a_{\n,m}$ and $\deg a_{\n,m} \leq |\n| = \eta_{\n,m}$ the orthogonality relations for $j=m-1$ imply that there can be no additional orthogonality relation unless $a_{\n,m} \equiv 0$. But this is not possible because \eqref{df:HP:ly:1} would imply that $a_{\n,j} \equiv 0$, $j=0,\ldots,m$ and the solution of that system of equations would be the trivial one against our assumption. Consequently, all the orders in \eqref{df:HP:ly:1} are exact, $\deg Q_{\n,j} = \eta_{\n,j}$, $j=0,\ldots,m-1$. and $\deg a_{\n,m} = |\n|$. In the sequel, $Q_{\n,m} := a_{\n,m}$.

 Set
 \[
   \mathcal{H}_{\n,j}(z) := \frac{Q_{\n,j+1}(z) \mathcal{A}_{\n,j}(z)}{Q_{\n,j}(z)}, \quad j=0,\ldots,m-1, \qquad \mathcal{H}_{\n,m}(z) \equiv (-1)^m.
 \]
From \eqref{formrec}-\eqref{ortvar}, for $j=0,\ldots,m-1$, we get
\begin{equation}
    \label{formrec2}
    \mathcal{H}_{\n,j}(z)  = \int \frac{Q_{\n,j+1}^2(x)}{z-x}\frac{\mathcal{H}_{\n,j+1}(x) \D \sigma_{j+1}(x)}{Q_{\n,j}(x)Q_{\n,j+2}(x)},
\end{equation}
 where $Q_{\n,0} \equiv Q_{\n,m+1}  \equiv 1$, and
\begin{equation}
    \label{ortvar2}
    \int x^\nu Q_{\n,j+1}(x) \frac{\mathcal{H}_{\n,j+1}(x) \D \sigma_{j+1}(x)}{Q_{\n,j}(x)Q_{\n,j+2}(x)}=0,\qquad \nu=0,1,\ldots, \eta_{\n,j+1}-1.
 \end{equation}

 Define $K_{\n,m}:= 1$,
    \[
      K_{\n,j}:= \left(\int (Q_{\n,j+1}(x))^2 \frac{| \mathcal{H}_{\n,j+1}(x)\D \sigma_{j+1}(x)|}{| {Q}_{\n,j}(x) {Q}_{\n,j+2}(x)|}\right)^{-1/2},\quad j=0,\ldots,m-1
    \]
    and
    \begin{equation}
        \label{def:kappa}
        \kappa_{\n,j+1}:= \frac{K_{\n,j}}{K_{\n,j+1}},\qquad j=0,\ldots,m-1.
    \end{equation}
    Set
    \begin{equation*}
        \label{orthonormal}
        q_{\n,j} := \kappa_{\n,j}Q_{\n,j}, \quad j=1,\ldots,m, \qquad  {h}_{\n,j} := (K_{n,j})^2 \mathcal{H}_{\n,j} ,\quad j=0,\ldots,m.
    \end{equation*}
 With the new notation, \eqref{formrec2}-\eqref{ortvar2} become
 \begin{equation}
 	\label{formrec3}
    h_{\n,j}(z)  = \varepsilon_{\n,j+1} \int \frac{q_{\n,j+1}^2(x)}{z-x} \frac{|h_{\n,j+1}(x)\D \sigma_{j+1}(x)|}{|Q_{\n,j}(x)Q_{\n,j+2}(x)|},  \qquad j=0,\ldots,m-1
 \end{equation}
 where $\varepsilon_{\n,j+1} $ denotes the constant sign which the varying measure $\frac{\mathcal{H}_{\n,j+1}(x) \D \sigma_{j+1}(x)}{Q_{\n,j}(x)Q_{\n,j+2}(x)}$ adopts on $\Delta_{j+1}$, and
\[
    \int x^\nu q_{\n,j+1}(x) \frac{|h_{n,j+1}(x)\D \sigma_{j+1}(x)|}{|Q_{\n,j}(x)Q_{\n,j+2}(x)|}=0,\qquad \nu=0,1,\ldots, \eta_{\n,j+1}-1.
 \]
 Notice that $q_{\n,j+1}$ is orthonormal with respect to $ \frac{|h_{\n,j+1}(x) \D \sigma_{j+1}(x)|}{|Q_{\n,j}(x)Q_{\n,j+2}(x)|}$.

\subsection{The operator $\tilde{T}_{\n}$}

    Fix $\n \in (\ZZ_+^m)^*$. Let $ \mathcal{P}_{\n} $ denote the set of all vector polynomials $(Q_1,\ldots,Q_m)$ where $Q_j, j=1,\ldots,m,$ is a monic polynomial  with real coefficients of degree $\eta_{\n,j}$
 whose zeros lie in $\mathbb{C} \setminus (\Delta_{j-1} \cup \Delta_{j+1})$. Recall that $\Delta_0 = \Delta_{m+1} \equiv \varnothing$.

    \begin{definition}
        \label{defTn} Define $\tilde{T}_{\n}: \mathcal{P}_{\n} \longrightarrow \mathcal{P}_{\n}$ such that for each ${\vec{Q}} = ({Q}_1,\ldots, {Q}_m) \in \mathcal{P}_{\n}$,  $\tilde{T}_{\n}(\vec{Q}) = \vec{Q}^* = ({Q}^*_1,\ldots,{Q}^*_m)\in \mathcal{P}_{\n}$ satisfies
        \begin{equation}
            \label{defTn1}
            \int x^\nu Q_{j}^*(x)\frac{\mathcal{H}_{j}( {\vec{Q}};x) \D \sigma_{j}(x)}{ {Q}_{j-1}(x) {Q}_{j+1}(x)} =0,\qquad \nu=0,\ldots,\eta_{\n,j} -1, \qquad j=1,\ldots m,
        \end{equation}
        where $\mathcal{H}_{m}({\vec{Q}};x) \equiv (-1)^m$, $Q_0 = Q_{m+1} \equiv 1$, and
        \begin{equation}
            \label{defTn2}
            \mathcal{H}_{j}( {\vec{Q}};z) = \int \frac{(Q^*_{j+1}(x))^2}{z-x}\frac{\mathcal{H}_{j+1}( {\vec{Q}};x)\D\sigma_{j+1}(x)}{ {Q}_{j}(x) {Q}_{j+2}(x)}, \qquad j=0,\ldots,m-1.
        \end{equation}
    \end{definition}

        The components of $\vec{Q}^*$ as well as the weights $\mathcal{H}_{j}( {\vec{Q}};z)$ must be calculated inductively starting from the value $j=m$ down to $j=0$. Since $\mathcal{H}_{m}({\vec{Q}};x)$ is given, with \eqref{defTn1}-\eqref{defTn2} you can determine $Q_m^*$ and $\mathcal{H}_{m-1}({\vec{Q}};x)$ univocally. With this information, \eqref{defTn1}-\eqref{defTn2} determine $Q_{m-1}^*$ and $\mathcal{H}_{m-2}({\vec{Q}};x)$ and so forth. Therefore, $\tilde{T}_{\n}$ is correctly defined.

        We can define a distance $d_{\n}: \mathcal{P}_{\n} \times \mathcal{P}_{\n} \longrightarrow \RR_{\geq 0}$ as follows.
        \[ d_{\n}(\vec{Q}_1, \vec{Q}_2) = \max_{1 \leq j \leq m}\{\|Q_{1,j} - Q_{2,j}\|_{\Delta_{j-1}\cup \Delta_{j+1}}\},
        \]
        where $\vec{Q}_1 = (Q_{1,1},\ldots,Q_{1,m})$ and $\vec{Q}_2 = (Q_{2,1},\ldots,Q_{2,m})$ are arbitrary vector polynomials in $\mathcal{P}_{\n}$ and the norms are uniform on the indicated sets (recall that by convention $\Delta_0 = \Delta_{m+1} = \varnothing$). It is not difficult to prove that $\tilde{T}_{\n}$ is continuous on $\mathcal{P}_{\n}$ with this distance (for details see the corresponding result in \cite[Subsection 3.6]{LGLago}). This property is used when we apply Brouwer's Theorem in Subsection 3.2 below.

    \begin{lemma}
        \label{unicoQn}
        Fix $\n \in (\ZZ_+^m)^*$. The operator $\tilde{T}_{\n}$ has a unique fixed point which coincides with $\vec{Q}_{\n} := (Q_{\n,1},\ldots,Q_{\n,m})$.
    \end{lemma}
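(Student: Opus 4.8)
The plan is to show that $\vec{Q}_{\n}$ is a fixed point of $\tilde{T}_{\n}$ and then that fixed points are unique. For the first part, recall from Subsection \ref{2.1} that the polynomials $Q_{\n,j}$, $j=1,\ldots,m$, are monic of degree exactly $\eta_{\n,j}$ with real coefficients, and that (taking $Q_{\n,0}=Q_{\n,m+1}\equiv 1$) they satisfy the recursion \eqref{formrec2} and the orthogonality relations \eqref{ortvar2}. Moreover, by the stated properties of the forms, the zeros of $Q_{\n,j}$ lie in $\mathring{\Delta}_j$, hence in $\CC\setminus(\Delta_{j-1}\cup\Delta_{j+1})$ by \eqref{empty}, so $\vec{Q}_{\n}\in\mathcal{P}_{\n}$. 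First I would evaluate $\tilde{T}_{\n}$ at the input $\vec{Q}=\vec{Q}_{\n}$: in \eqref{defTn2}, when $\vec{Q}=\vec{Q}_{\n}$, the recursion for $\mathcal{H}_j(\vec{Q}_{\n};z)$ has the same structure as \eqref{formrec2}, with $Q_{j-1},Q_{j+1},Q_j,Q_{j+2}$ replaced by $Q_{\n,j-1},Q_{\n,j+1},Q_{\n,j},Q_{\n,j+2}$ and $Q^*_{j+1}$ in place of $Q_{\n,j+1}$. The claim is that, solving from $j=m$ downward, one gets $Q^*_j=Q_{\n,j}$ and $\mathcal{H}_j(\vec{Q}_{\n};z)=\mathcal{H}_{\n,j}(z)$ at every level. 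This is a downward induction: at $j=m$ we have $\mathcal{H}_m(\vec{Q}_{\n};z)\equiv(-1)^m=\mathcal{H}_{\n,m}(z)$; assuming $\mathcal{H}_{j+1}(\vec{Q}_{\n};\cdot)=\mathcal{H}_{\n,j+1}$ and $Q^*_{j+2}=Q_{\n,j+2}$ (with the obvious conventions at the top), the orthogonality relations \eqref{defTn1} for index $j$ become exactly \eqref{ortvar2} with the denominator $Q_{\n,j-1}(x)Q_{\n,j+1}(x)$, which characterize $Q_{\n,j}$ as a monic polynomial of degree $\eta_{\n,j}$ orthogonal to $\{1,x,\ldots,x^{\eta_{\n,j}-1}\}$ against the (sign-definite, by the sign considerations after \eqref{formrec3}) varying measure — so $Q^*_j=Q_{\n,j}$ by uniqueness of such an orthogonal polynomial; then \eqref{defTn2} at level $j-1$ reproduces \eqref{formrec2}, giving $\mathcal{H}_{j-1}(\vec{Q}_{\n};\cdot)=\mathcal{H}_{\n,j-1}$. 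Hence $\tilde{T}_{\n}(\vec{Q}_{\n})=\vec{Q}_{\n}$.

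For uniqueness, suppose $\vec{Q}=(Q_1,\ldots,Q_m)\in\mathcal{P}_{\n}$ is any fixed point, so $\vec{Q}^*=\vec{Q}$. Then $Q_j$ itself satisfies, for each $j$, the orthogonality relations \eqref{defTn1} with $Q^*_j$ replaced by $Q_j$, i.e. $Q_j$ is a monic polynomial of degree $\eta_{\n,j}$ orthogonal against the weight $\mathcal{H}_j(\vec{Q};x)\,\D\sigma_j(x)/(Q_{j-1}(x)Q_{j+1}(x))$ to all lower powers. I would then reconstruct, from $\vec{Q}$ and these relations, the associated functions $\mathcal{A}_j$ and show that they must coincide with the ML Hermite-Padé forms $\mathcal{A}_{\n,j}$ — equivalently, run the argument of Subsection \ref{2.1} in reverse. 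Concretely, set $\mathcal{A}_m:=(-1)^m Q_m$ and, descending in $j$, define $\mathcal{A}_j(z):= \mathcal{H}_j(\vec{Q};z)\,Q_j(z)/Q_{j+1}(z)$; by \eqref{defTn2} this is the Cauchy transform of $\mathcal{A}_{j+1}\,\D\sigma_{j+1}/Q_j$, so the pair $(\mathcal{A}_j,\mathcal{A}_{j+1})$ obeys exactly the relation \eqref{formred}, while the orthogonality \eqref{defTn1} for index $j$ is \eqref{A:orth} with $N=\eta_{\n,j+1}+1$ — that is, $\mathcal{A}_j/Q_j$ vanishes at infinity to order $\eta_{\n,j+1}+1$. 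Unwinding the definition of $\mathcal{A}_j$ as a combination of $\widehat{s}_{j+1,k}$'s, this says precisely that the polynomials extracted from $(\mathcal{A}_0,\ldots,\mathcal{A}_m)$ satisfy \eqref{df:HP:ly:1}, hence $(\mathcal{A}_0,\ldots,\mathcal{A}_m)$ is a solution of the ML Hermite-Padé problem for $\n$; by perfectness of the Nikishin system (\cite[Theorem 1.1]{lysov20}, recalled after Definition \ref{df:HP:ly}) this solution is unique up to a constant, and the monic normalization of $Q_m=a_{\n,m}$ fixes the constant, forcing $\mathcal{A}_j=\mathcal{A}_{\n,j}$ for all $j$ and therefore $Q_j=Q_{\n,j}$.

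An alternative, perhaps cleaner, route to uniqueness is to avoid the detour through the forms: from $\vec{Q}^*=\vec{Q}$, the functions $\mathcal{H}_j(\vec{Q};z)$ satisfy the closed system \eqref{formrec2}–\eqref{ortvar2} verbatim with $Q_{\n,j}$ replaced by $Q_j$; one then argues directly that this system has a unique solution in $\mathcal{P}_{\n}$ by the same downward induction as in the existence part — given that $\mathcal{H}_m$ and $Q_{m+1},Q_{m+2}$ are pinned down, the orthogonality at each successive level determines $Q_j$ uniquely (monic orthogonal polynomial against a sign-definite measure, invoking that the relevant varying measures keep constant sign, which is part of the setup of Subsection \ref{2.1}) and then \eqref{formrec2} determines $\mathcal{H}_{j-1}$. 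I expect the main obstacle to be bookkeeping rather than conceptual: one must be careful that at each stage the denominators $Q_{j-1}Q_{j+1}$ do not vanish on $\Delta_j$ (guaranteed by $\vec{Q}\in\mathcal{P}_{\n}$ and the disjointness \eqref{empty}), that the varying measures are genuinely sign-definite on $\Delta_j$ so that classical existence/uniqueness of the orthogonal polynomial applies, and that the degree count $\deg Q^*_j=\eta_{\n,j}$ is consistent all the way down — the last point being exactly where the exactness of the interpolation orders established in Subsection \ref{2.1} is used. None of these steps is deep, but the induction must be set up so that the conventions $Q_0=Q_{m+1}\equiv1$, $\mathcal{H}_m\equiv(-1)^m$ feed correctly into the base case.
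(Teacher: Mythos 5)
Your main argument (``Route 1'') is essentially the paper's proof: for an arbitrary fixed point $\vec{\tilde Q}$ one defines $\tilde{\mathcal A}_j := \tilde Q_j\, \mathcal H_j(\vec{\tilde Q};\cdot)/\tilde Q_{j+1}$, shows that $\tilde{\mathcal A}_j$ is the Cauchy transform of $\tilde{\mathcal A}_{j+1}\,\D\sigma_{j+1}$ with the right vanishing order at infinity, writes $\tilde{\mathcal A}_j$ as a linear form $(-1)^j a_j + \sum_{k>j}(-1)^k a_k\widehat s_{j+1,k}$ with $\deg a_j\le|\n|-1$, and invokes perfectness of the Nikishin system to conclude $\vec{\tilde Q}=\vec Q_{\n}$. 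Be aware, though, that the step you dismiss as ``unwinding the definition of $\mathcal A_j$ as a combination of $\widehat s_{j+1,k}$'s'' is precisely where the paper spends most of its effort: one must produce, by a downward induction on $j$, explicit polynomials $a_j$ via
\[
a_j(z) = \int \frac{a_{j+1}(z)-a_{j+1}(x)}{z-x}\,\D s_{j+1,j+1}(x) + \sum_{k\ge j+2}(-1)^{k-j+1}\int\frac{a_k(z)-a_k(x)}{z-x}\,\D s_{j+1,k}(x),
\]
verify $\deg a_j = |\n|-1$, and check that the passage from \eqref{defTn6} to the pure Cauchy transform \eqref{defTn7} really does use the orthogonality \eqref{defTn5}; none of this is automatic and a blind ``run the argument in reverse'' would leave the reader to reconstruct it.

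Your ``alternative, perhaps cleaner'' route to uniqueness is, however, flawed. You claim that, once $\mathcal H_m$ and $Q_{m+1},Q_{m+2}\equiv 1$ are pinned down, ``the orthogonality at each successive level determines $Q_j$ uniquely'' by the same downward induction that verified existence. But for a fixed point the orthogonality at the top level already reads
\[
\int x^\nu\, Q_m(x)\,\frac{(-1)^m\,\D\sigma_m(x)}{Q_{m-1}(x)}=0, \qquad \nu=0,\ldots,\eta_{\n,m}-1,
\]
and the denominator contains $Q_{m-1}$, which the downward induction has not reached. The same obstruction recurs at every level: the orthogonality for $Q_j$ involves $Q_{j-1}$ in the varying weight. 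This self-referential coupling is exactly why the problem is a genuine fixed-point problem rather than a triangular one; the downward induction in the existence half works only because there one already knows the candidate $\vec Q_{\n}$ and is merely verifying. The paper's closing remark in this subsection --- that replacing $\mathcal H_j(\vec Q;\cdot)$ by the fixed weights $\mathcal H_{\n,j}$ yields an operator for which uniqueness of the fixed point is unclear --- is precisely a warning against the kind of shortcut you propose in Route 2. Drop it, and flesh out the algebraic induction in Route 1.
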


    \begin{proof}
    Indeed, $\vec{Q}_{\n} \in \mathcal{P}_{\n}$ and \eqref{formrec2}-\eqref{ortvar2} show that it is a fixed point of $\tilde{T}_{\n}$ (compare with \eqref{defTn1}-\eqref{defTn2}). Let $\vec{\tilde{Q}} = (\tilde{Q}_1,\ldots,\tilde{Q}_m)$ be an arbitrary fixed point of $\tilde{T}_{\n}$. We must show that $\vec{\tilde{Q}}  = \vec{Q}_{\n}$.

    From the definition of the operator $\tilde{T}_{\n}$ and of a fixed point, it follows that
    \begin{equation}
        \label{defTn3}
        \int x^\nu \tilde{Q}_{j} (x)\frac{\mathcal{H}_{j}( {\vec{\tilde{Q}}};x) \D \sigma_{j}(x)}{ \tilde{Q}_{j-1}(x) \tilde{Q}_{j+1}(x)} =0,\qquad \nu=0,\ldots,\eta_{\n,j} -1, \qquad j=1,\ldots m,
    \end{equation}
    where $\mathcal{H}_{m}({\vec{\tilde{Q}}};x) \equiv (-1)^m$, $\tilde{Q}_0 = \tilde{{Q}}_{m+1} \equiv 1$, and
    \begin{equation}
    	\label{defTn4}
        \mathcal{H}_{j}( {\vec{\tilde{Q}}};z) = \int \frac{(\tilde{Q}_{j+1}(x))^2}{z-x}\frac{\mathcal{H}_{j+1}( {\vec{\tilde{Q}}};x)\D\sigma_{j+1}(x)}{ \tilde{Q}_{j}(x) \tilde{Q}_{j+2}(x)}, \qquad j=0,\ldots,m-1.
    \end{equation}
    Notice that $\mathcal{H}_{j}( {\vec{\tilde{Q}}};z)$, $j=0,\ldots,m$ is analytic, symmetric with respect to the real line, and never equals zero in $\CC \setminus \Delta_{j+1}\, (\Delta_{m+1} = \varnothing)$. For $j=m$ this is trivial. Then, from \eqref{defTn4} for $j=m-1$ the assertion readily follows. If for some $j+1$ the statement holds, from \eqref{defTn4} the same is true for $j$.

    Define
    \begin{equation}
        \label{Aj}
        \tilde{\mathcal{A}}_{j}(z) :=  \frac{ \tilde{Q}_{j}(z)\mathcal{H}_{j}( \vec{\tilde{Q}};z)}{\tilde{Q}_{j+1}(z)}, \qquad j=0,\ldots,m.
    \end{equation}
    From this definition and the properties of $\mathcal{H}_{j}( \vec{\tilde{Q}};z)$, it follows that $\tilde{\mathcal{A}}_{j}$, $j=0,\ldots,m$ is analytic and symmetric in $\CC \setminus \Delta_{j+1}$, and its zeros in this region coincide with the roots of $\tilde{Q}_{j}$ all of which are simple and lie on $\Delta_j$. Let us show that there exist polynomials $a_{0}, a_{1},\ldots,a_{m}$, where $\deg a_{j}\leq |\n|-1$, $j=0,1,\ldots,m-1,$ $a_{m}$ monic, and $\deg a_{m}\leq |\n|$, not all identically equal to zero, such that for each $j=0,\ldots,m-1$
    \begin{equation*}
        \tilde{\mathcal{A}}_{j}(z) = \left((-1)^j a_{j} + \sum_{k=j+1}^m (-1)^k a_{k}\widehat{s}_{j+1,k}\right)(z) = \mathcal{O}\left(\frac{1}{z^{n_{j+1}+1}}\right), \qquad z \to \infty.
    \end{equation*}
    If this is so, from the uniqueness of the multi-level Hermite-Pad\'e approximants it follows that
    \[
      (a_{0}, a_{1},\ldots,a_{m}) = (a_{\n,0}, a_{\n,1},\ldots,a_{\n,m}),\qquad (\tilde{\mathcal{A}}_{0}, \tilde{\mathcal{A}}_{1},\ldots,\tilde{\mathcal{A}}_{m-1}) = ( {\mathcal{A}}_{\n,0}, {\mathcal{A}}_{\n,1},\ldots, {\mathcal{A}}_{m-1})
    \]
    and
    \[ (\tilde{Q}_{1},\ldots,\tilde{Q}_{m}) = (Q_{\n,1},\ldots,Q_{\n,m}).
    \]

    Take $a_{m} := \tilde{Q}_m = (-1)^{m}\tilde{\mathcal{A}}_{m}$. Obviously, $a_m$ is a monic  polynomial of degree $|\n|$. With the notation introduced in \eqref{Aj}, \eqref{defTn3}-\eqref{defTn4} adopt the form
    \begin{equation}
        \label{defTn5}
        \int x^\nu \tilde{\mathcal{A}}_{j}(x) \frac{  \D \sigma_{j}(x)}{ \tilde{Q}_{j-1}(x) } =0,\qquad \nu=0,\ldots,\eta_{\n,j} -1, \qquad j=1,\ldots m,
    \end{equation}
    and
    \begin{equation}
        \label{defTn6}
        \tilde{\mathcal{A}}_{j}(z)  = \frac{\tilde{Q}_{j}(z)}{\tilde{Q}_{j+1}(z)}\int \frac{\tilde{Q}_{j+1}(x)}{z-x}\frac{\tilde{\mathcal{A}}_{j+1}(x)\D\sigma_{j+1}(x)}{ \tilde{Q}_{j}(x) }, \qquad j=0,\ldots,m-1.
    \end{equation}
    Since $\deg \tilde{Q}_{j} = \eta_{\n,j}$ and $\deg \tilde{Q}_{j+1} =\eta_{\n,j+1}$ this relation entails that
    \[ \tilde{\mathcal{A}}_{j}(z) = \mathcal{O}(1/z^{n_{j+1} +1}), \qquad z \to \infty,
    \]
    as needed. Using \eqref{defTn5}-\eqref{defTn6}, we get
    \begin{equation}
        \label{defTn7}
        \tilde{\mathcal{A}}_{j}(z) = \int \frac{\tilde{\mathcal{A}}_{j+1}(x) }{z-x} \D\sigma_{j+1}(x).
    \end{equation}
    For $j= m-1$, \eqref{defTn7} gives
    \[
        \tilde{\mathcal{A}}_{m-1}(z) =   \int \frac{ \tilde{\mathcal{A}}_{m}(x)-  \tilde{\mathcal{A}}_{m}(z)}{z-x} \D\sigma_{m}(x) + \int \frac{  \tilde{\mathcal{A}}_{m}(z)}{z-x} \D\sigma_{m}(x)
    \]
    \[ = (-1)^{m-1}a_{m-1}(z)  + (-1)^m a_m(z) \widehat{s}_{m,m},
    \]
    where
    \[
      a_{m-1}(z):= \int \frac{{a}_{m}(z)-{a}_{m}(x)}{z-x}\D\sigma_{m}(x), \qquad \deg a_{m-1} = |\n| -1.
    \]

    Assume that there exist polynomials $a_{j+1},\ldots,a_{m-1}$, $\deg a_k = |\n| -1$, $k= j+1,\ldots,m-1,$ such that
    \[ \tilde{\mathcal{A}}_{j+1} = (-1)^{j+1} a_{j+1} + \sum_{k=j+2}^m (-1)^k a_{k}\widehat{s}_{j+2,k}.
    \]
    Substituting this expression inside the integral in \eqref{defTn7}, we get
    \[
      \tilde{\mathcal{A}}_{j}(z) = \int \frac{\tilde{\mathcal{A}}_{j+1}(x) \mp\left((-1)^{j+1} a_{j+1}(z) + \sum_{k=j+2}^m (-1)^k a_{k}(z)\widehat{s}_{j+2,k}(x)\right)}{z-x} \D\sigma_{j+1}(x) =
    \]
    \[
      (-1)^{j+1}\int \frac{a_{j+1}(x) -a_{j+1}(z)}{z-x} \D\sigma_{j+1}(x) + \sum_{k= j+2}^m (-1)^k \int \frac{a_{k}(x) -a_{k}(z)}{z-x} \D s_{j+1,k}(x) +
    \]
    \[  \sum_{k=j+1}^m (-1)^{k}a_{k}(z)\widehat{s} _{j+1,k}(z).
    \]
    obtaining  for $ \tilde{\mathcal{A}}_{j}$ an expression similar to the one we had for $ \tilde{\mathcal{A}}_{j+1}$ with the same $a_k$, $k=j+1,\ldots,m$  plus
    \[
      a_j(z) :=  \int \frac{a_{j+1}(z) -a_{j+1}(x)}{z-x} \D {s}_{j+1,j+1}(x) + \sum_{k= j+2}^m (-1)^{k-j+1} \int \frac{a_{k}(z) -a_{k}(x)}{z-x} \D s_{j+1,k}(x),
    \]
    which obviously is a polynomial of degree $|\n|-1$. We have concluded the proof.
    \end{proof}

    In the definition of the operator $\tilde{T}_{\n}$ one could be tempted to replace the weights $\mathcal{H}_j(\vec{Q};x)$ with $\mathcal{H}_{\n,j}(x)$ (independent of $\vec{Q}$). The resulting operator also has $\vec{Q}_{\n}$ as a fixed point but it is not clear whether it has other fixed points or not.

\subsection{Normalization}  \label{normalization}

    Fix $\n \in (\ZZ_+^m)^*$. Let $\vec{Q} = (Q_1,\ldots,Q_m) \in \mathcal{P}_{\n}$ and $\vec{Q}^* = (Q_1^*,\ldots,Q_m^*) = \tilde{T}_{\n}(\vec{Q})$. Define
    \[
      K_j(\vec{Q}^*) := \left(\int (Q^*_{j+1}(x))^2 \frac{| \mathcal{H}_{j+1}(\vec{Q};x)\D \sigma_{j+1}(x)|}{| {Q}_{j}(x) {Q}_{j+2}(x)|}\right)^{-1/2},\quad j=0,\ldots,m-1,
    \]
    \[
      K_{m}(\vec{Q}^*):= 1,\qquad
      \kappa_{j+1}(\vec{Q}^*):= \frac{K_{ j}(\vec{Q}^*)}{K_{ j+1}(\vec{Q}^*)},\qquad j=0,\ldots,m-1.
    \]
    Set
    \begin{equation*}
        q_{j}^* := \kappa_{ j}(\vec{Q}^*)Q_{ j}^*, \quad j=1,\ldots,m, \qquad  {h}_{ j}(\vec{Q};x) := (K_{j}(\vec{Q}^*))^2 \mathcal{H}_{j}(\vec{Q};x) ,\quad j=0,\ldots,m.
    \end{equation*}

    With this normalization, \eqref{defTn1}--\eqref{defTn2} imply
    \[
        \int x^\nu q_{j}^*(x)\frac{| {h}_{j}( {\vec{Q}};x)\D \sigma_{j}(x)|}{ |{Q}_{j-1}(x) {Q}_{j+1}(x)|} =0,\quad \nu=0,\ldots,\eta_{\n,j} -1, \quad j=1,\ldots m,
    \]
    where $ {h}_{m}({\vec{Q}};x) \equiv (-1)^m$, $Q_0 = Q_{m+1} \equiv 1$, and
    \begin{equation}
    	\label{defTn12}
        {h}_{j}( {\vec{Q}};z) = \varepsilon_{j+1}(\vec{Q})\int \frac{(q^*_{j+1}(x))^2}{z-x}\frac{| {h}_{j+1}( {\vec{Q}};x)| |\D\sigma_{j+1}(x)|}{| {Q}_{j}(x) {Q}_{j+2}(x)|}, \qquad j=0,\ldots,m-1,
    \end{equation}
    where $\varepsilon_{j+1}(\vec{Q})$ denotes the constant sign which the varying measure $\frac{{h}_{j+1}( {\vec{Q}};x) \D\sigma_{j+1}(x)}{{Q}_{j}(x) {Q}_{j+2}(x)}$ adopts on $\Delta_{j+1}$. Notice that $q^*_{j+1}$ is orthonormal with respect to $ \frac{|{h}_{j+1}( {\vec{Q}};x)\D\sigma_{j+1}(x)|}{|{Q}_{j}(x) {Q}_{j+2}(x)|}$.

    \begin{lemma}
        \label{lem:hnj}
        Suppose that $\sigma_j'\neq 0 $ a.e. on $\Delta_j = [a_j,b_j]$, $j=1,\ldots,m $.  Let $\Lambda \subset (\ZZ_+^m)^*$ be a sequence of distinct multi-indices such that for some fixed $N \in \mathbb{Z}_+$
         \begin{equation}
        \label{N}
        n_{j+1} \leq n_j + N, \qquad j=1,\ldots,m-1, \qquad \n = (n_1,\ldots,n_m) \in \Lambda,
    \end{equation}
          and let $(\vec{\tilde{Q}}_{\n})_{\n \in \Lambda}, \vec{\tilde{Q}}_{\n} \in \mathcal{P}_{\n},$ be an arbitrary sequence of vector polynomials such that  for each $j=1,\ldots,m$ the zeros of the $j-th$ component $\tilde{Q}_{\n,j}$ of $\vec{\tilde{Q}}_{\n}$ remain uniformly bounded away from $\Delta_{j-1} \cup \Delta_{j+1}$ for $\n \in \Lambda$.    Then
        \begin{equation}
            \label{def:htil}
            \lim_{\n \in \Lambda} |{h}_{j}( {\vec{\tilde{Q}}}_{\n};z)| = \frac{1}{\sqrt{|z- b_{j+1}||z-a_{j+1}|}} =: \tilde{h}_j(z), \qquad j=0,\ldots,m-1,
        \end{equation}
        uniformly on $\Delta_{j} \cup \Delta_{j+2}$, $(\Delta_0 = \Delta_{m+1} = \varnothing)$.
    \end{lemma}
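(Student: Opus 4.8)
The plan is to read $|h_j(\vec{\tilde{Q}}_{\n};\cdot)|$ as the modulus of the Cauchy transform of a \emph{probability} varying measure, and then to argue by downward induction on $j$, from $j=m-1$ to $j=0$, invoking at each step the known asymptotics of orthogonal polynomials with respect to varying measures from \cite{lagober98,bernardo_lago1,LGLago}.

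First I would unwind the definitions. Fix $\n\in\Lambda$ and $j\in\{0,\dots,m-1\}$. By the normalization of Subsection \ref{normalization}, the polynomial $q^*_{j+1}$ built from $\vec{Q}=\vec{\tilde{Q}}_{\n}$ is the orthonormal polynomial of degree $\eta_{\n,j+1}$ for the positive measure
\[
  \D\mu_{\n,j+1}(x):=\frac{|h_{j+1}(\vec{\tilde{Q}}_{\n};x)|\,\D\sigma_{j+1}(x)}{|\tilde{Q}_{\n,j}(x)\,\tilde{Q}_{\n,j+2}(x)|}\qquad\text{on }\Delta_{j+1},
\]
and by \eqref{defTn12},
\[
  |h_j(\vec{\tilde{Q}}_{\n};z)|=\left|\int \frac{(q^*_{j+1}(x))^2}{z-x}\,\D\mu_{\n,j+1}(x)\right|.
\]
Since $\int (q^*_{j+1})^2\,\D\mu_{\n,j+1}=1$, the measure $(q^*_{j+1})^2\,\D\mu_{\n,j+1}$ is a probability measure on the fixed interval $\Delta_{j+1}$, and $|h_j(\vec{\tilde{Q}}_{\n};\cdot)|$ is the modulus of its Cauchy transform. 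I would also record two consequences of \eqref{N}: because $\Lambda$ consists of infinitely many distinct multi-indices, $|\n|=\eta_{\n,m}\to\infty$, and \eqref{N} forces $n_k\le n_1+(k-1)N$, whence $n_1$, and hence every $\eta_{\n,k}$, tends to $\infty$; moreover $\deg(\tilde{Q}_{\n,j}\tilde{Q}_{\n,j+2})=\eta_{\n,j}+\eta_{\n,j+2}\le 2\eta_{\n,j+1}+N$ (with $\tilde{Q}_{\n,m+1}\equiv1$), so the polynomial dividing $\D\sigma_{j+1}$ in $\D\mu_{\n,j+1}$ has degree at most twice $\deg q^*_{j+1}$ up to the fixed constant $N$, and its zeros stay at uniformly positive distance from $\Delta_{j+1}$ (those of $\tilde{Q}_{\n,j}$ avoid $\Delta_{j-1}\cup\Delta_{j+1}$, those of $\tilde{Q}_{\n,j+2}$ avoid $\Delta_{j+1}\cup\Delta_{j+3}$).

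Now the induction. The base case $j=m-1$ is the instance $\tilde{h}_m\equiv1$ of the inductive step: there $h_m\equiv(-1)^m$ and $\tilde{Q}_{\n,m+1}\equiv1$, so $\D\mu_{\n,m}=\D\sigma_m/|\tilde{Q}_{\n,m-1}|$. For the step, assume \eqref{def:htil} at level $j+1$, i.e. $|h_{j+1}(\vec{\tilde{Q}}_{\n};\cdot)|\to\tilde{h}_{j+1}$ uniformly on $\Delta_{j+1}\cup\Delta_{j+3}$, hence uniformly on $\Delta_{j+1}$; since $\Delta_{j+1}\cap\Delta_{j+2}=\varnothing$, the limit $\tilde{h}_{j+1}$ is positive and continuous there. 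Thus $\D\mu_{\n,j+1}$ is a varying measure of the form $\rho_{\n}\,\D\sigma_{j+1}/|W_{\n}|$ with $\rho_{\n}=|h_{j+1}(\vec{\tilde{Q}}_{\n};\cdot)|$ converging uniformly on $\Delta_{j+1}$ to a positive continuous function, $W_{\n}=\tilde{Q}_{\n,j}\tilde{Q}_{\n,j+2}$ of degree $\le 2\eta_{\n,j+1}+N$ with zeros uniformly away from $\Delta_{j+1}$, $\sigma_{j+1}'\neq0$ a.e.\ on $\Delta_{j+1}$, and $\deg q^*_{j+1}=\eta_{\n,j+1}\to\infty$. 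Under exactly these hypotheses the varying-orthogonal-polynomial results of \cite{lagober98,bernardo_lago1,LGLago} give that $(q^*_{j+1})^2\,\D\mu_{\n,j+1}$ converges, as $\n\in\Lambda$, in the weak-$*$ topology to the equilibrium (arcsine) measure $\omega_{\Delta_{j+1}}$ of $\Delta_{j+1}$; equivalently, the Cauchy transforms converge uniformly on compact subsets of $\overline{\CC}\setminus\Delta_{j+1}$ to $\widehat{\omega}_{\Delta_{j+1}}(z)=\bigl((z-a_{j+1})(z-b_{j+1})\bigr)^{-1/2}$, the branch being positive for $z>b_{j+1}$. By \eqref{empty}, $\Delta_j$ and $\Delta_{j+2}$ are each disjoint from $\Delta_{j+1}$, so $\Delta_j\cup\Delta_{j+2}$ is a compact subset of $\overline{\CC}\setminus\Delta_{j+1}$; taking moduli yields
\[
  \lim_{\n\in\Lambda}|h_j(\vec{\tilde{Q}}_{\n};z)|=\frac{1}{\sqrt{|z-a_{j+1}|\,|z-b_{j+1}|}}=\tilde{h}_j(z)
\]
uniformly on $\Delta_j\cup\Delta_{j+2}$, which closes the induction. (These limiting weights are precisely the data that will define the operator $\vec{T}_{\vec{w}}$ used in the proof of Theorem \ref{th1}.)

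The main obstacle is the clean invocation of the varying-orthogonal-polynomial machinery in exactly the generality needed here. The outer factor $|h_{j+1}(\vec{\tilde{Q}}_{\n};\cdot)|$ is itself $\n$-dependent, so one must rely on the \emph{relative} asymptotics in \cite{lagober98,bernardo_lago1,LGLago}, which permit multiplying the varying weight by a sequence of positive continuous functions converging uniformly to a positive limit without disturbing the limiting equilibrium behaviour; one must also check that the degree bound $\deg W_{\n}\le 2\eta_{\n,j+1}+N$ (rather than $\le 2\eta_{\n,j+1}$) is within the scope of those statements, and that the zeros of the $\tilde{Q}_{\n,k}$ merely remaining at uniformly positive distance from the relevant intervals suffices (in the applications of this lemma those zeros in fact lie in a fixed compact set, so the point is harmless there). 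Everything else is routine: the signs $\varepsilon_{j+1}(\vec{\tilde{Q}}_{\n})$ and the absolute values in \eqref{defTn12} are immaterial, since only $|h_j(\vec{\tilde{Q}}_{\n};\cdot)|$ is sought and $(q^*_{j+1})^2\,\D\mu_{\n,j+1}$ is a positive measure.
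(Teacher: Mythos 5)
Your proof is correct and follows the same route as the paper's: reading $|h_j(\vec{\tilde{Q}}_{\n};\cdot)|$ as the modulus of the Cauchy transform of the probability measure $(q^*_{j+1})^2\,\D\mu_{\n,j+1}$, verifying the degree bound $n_{j+2}-n_{j+1}\le N$ and the uniform separation of the zeros of $W_{\n}=\tilde{Q}_{\n,j}\tilde{Q}_{\n,j+2}$ from $\Delta_{j+1}$, and then invoking the varying-measure asymptotics (the paper cites \cite[Theorem~8]{bernardo_lago1}) in a downward induction starting from $|h_m|\equiv 1$. The caveats you raise—the $\n$-dependence of the outer factor $|h_{j+1}|$ and the $O(1)$ excess in $\deg W_{\n}$ over $2\eta_{\n,j+1}$—are exactly within the scope of that theorem, so no gap remains.
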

    \begin{proof}
    In the sequel, $\vec{{Q}}_{\n}^* = \tilde{T}_{\n}(\vec{\tilde{Q}}_{\n} )$, where $\vec{{Q}}_{\n}^* = ({Q}_{\n,1}^* ,\ldots,{Q}_{\n,m}^* )$, $\vec{\tilde{Q}}_{\n} = (\tilde{Q}_{\n,1},\ldots,\tilde{Q}_{\n,m})$, and $({q}_{\n,1}^* ,\ldots,{q}_{\n,m}^* )$ is the vector of orthonormal polynomials associated with $({Q}_{\n,1}^* ,\ldots,{Q}_{\n,m}^* )$ using the normalization introduced previously. According to \eqref{defTn12}
    \begin{equation}
        \label{defTn13}
        {h}_{j}( {\vec{\tilde{Q}}}_{\n};z) = \varepsilon_{j+1}(\vec{\tilde{Q}}_{\n})\int \frac{(q^*_{\n,j+1}(x))^2}{z-x}\frac{| {h}_{j+1}( {\vec{\tilde{Q}}}_{\n};x)\D\sigma_{j+1}(x)|}{|\tilde{Q}_{\n,j}(x) \tilde{Q}_{\n,j+2}(x)|}, \qquad j=0,\ldots,m-1.
    \end{equation}
    As usual, $\tilde{Q}_{\n,0} = \tilde{Q}_{\n,m+1} \equiv 1$.

   By assumption, the zeros of the polynomials $\tilde{Q}_{\n,j} \tilde{Q}_{\n,j+2}$ are uniformly bounded away from $\Delta_{j+1}$. Due to \eqref{N}, we have
    \[ \deg\tilde{Q}_{\n,j}(x) + \deg \tilde{Q}_{\n,j+2}(x)-2\deg q^*_{\n,j+1} = n_{j+2}-n_{j+1} \leq N
    \]
    for all $\n \in \Lambda$,  $q^*_{\n,j+1}(x)$ is orthonormal with respect to the varying measure $\frac{| {h}_{j+1}( {\vec{\tilde{Q}}}_n;x)\D\sigma_{j+1}(x)|}{|\tilde{Q}_{\n,j}(x) \tilde{Q}_{\n,j+2}(x)|}$, and $\sigma_{j+1}' > 0$ a.e. on $\Delta_{j+1}$. We have all the ingredients to use \cite[Theorem 8]{bernardo_lago1} (see also \cite{lagober98}) and we obtain
    \begin{multline*}
        \lim_{\n \in \Lambda} \int \frac{(q^*_{\n,j+1}(x))^2}{z-x}\frac{| {h}_{j+1}( {\vec{\tilde{Q}}}_{\n};x)\D\sigma_{j+1}(x)|}{|\tilde{Q}_{\n,j}(x) \tilde{Q}_{\n,j+2}(x)|} = \\
        \frac{1}{\pi} \int_{a_{j+1}}^{b_{j+1}} \frac{1}{z-x} \frac{\D x}{\sqrt{(b_{j+1}-x)(x-a_{j+1})}} = \frac{1}{\sqrt{(z-b_{j+1})(z - a_{j+1})}},
    \end{multline*}
    uniformly on compact subsets of $\overline{\CC} \setminus \Delta_{j+1}$, provided   $|{h}_{j+1}( {\vec{\tilde{Q}}}_{\n};x)|$ converges uniformly to a positive function on $\Delta_{j+1}$. This can be proved by induction.

When $j=m-1$, $|{h}_{m}( {\vec{\tilde{Q}}}_{\n};x)| \equiv 1$ and the property holds; therefore,
\[
  \lim_{\n \in \Lambda} |{h}_{m-1}( {\vec{\tilde{Q}}}_{\n};z)| =\lim_{\n \in \Lambda}  \left|\int \frac{(q^*_{\n,m}(x))^2}{z-x}\frac{| {h}_{m}( {\vec{\tilde{Q}}}_{\n};x) \D\sigma_{m}(x)|}{|\tilde{Q}_{\n,m-1}(x)|~}\right| =  \frac{1}{|\sqrt{(z-a_m)(z-b_m)}|} ,
\]
uniformly on compact subsets of $\overline{\CC} \setminus \Delta_m$; in particular, on $\Delta_{m-1}$. Now, if we assume that the property holds for some $j+1$, repeating the arguments and using \eqref{defTn13}, we obtain that it also holds for $j$.
\end{proof}

\subsection{Some properties of the vector equilibrium measure}

The equilibrium measure which is relevant in our study satisfies some additional properties.

\begin{lemma}
    \label{soportecompleto}
    Let $ (p_1,\ldots,p_m) \in (\ZZ_+^m)^*$ where $p_1 > 0$ and $p_1 \geq \cdots \geq p_m.$  Let $\vec{\lambda} \in \mathcal{M}_1(\vec{\Delta})$ be the unitary vector equilibrium measure which solves \eqref{eq:vector}  with interaction matrix \eqref{interaction}  over the system of intervals $\vec{\Delta}$ and $P_j, j=1,\ldots,m,$ is defined by \eqref{Ps}. Then, for each $j=1,\ldots, m$,
    \begin{equation}
    	\label{soporte}
    	\supp (\lambda_j) = \Delta_j = [a_j,b_j],
    \end{equation}
    where $\vec{\lambda}= (\lambda_1,\ldots,\lambda_m)$. Moreover, $\D \lambda_j(x) = \lambda_j'(x) \D x$, $j=1,\ldots,m,$ is absolutely continuous with respect to  Lebesque measure on $\Delta_j$ and there exist constants  $A> 0, \beta =-1/2,$ and $\beta_0$ such that
	\begin{equation}
		\label{cond:a*}
		A^{-1}((b_j-x)(x-a_j))^{\beta_0}\leq \lambda_j'(x) \leq A((b_j-x)(x-a_j))^\beta, \qquad x \in (a_j,b_j).
	\end{equation}
\end{lemma}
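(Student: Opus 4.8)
The plan is to establish the two parts of Lemma~\ref{soportecompleto} in turn: first the support identity \eqref{soporte}, and then the absolute continuity and the two-sided bounds \eqref{cond:a*}. Throughout, I will exploit the variational characterization of $\vec{\lambda}$ as the minimizer of the energy functional associated with the positive definite interaction matrix $\mathcal{C}$ in \eqref{interaction}, together with the equilibrium conditions \eqref{eq:vector}--\eqref{eq:vector*}. The key structural feature to use is that $\mathcal{C}$ is a tridiagonal, diagonally dominant matrix, and that the chain of intervals is a ``Nikishin-type'' (path) graph, so the system decouples in a controllable inductive way.

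First I would prove \eqref{soporte}. I would argue by contradiction: suppose $\supp(\lambda_j) \subsetneq \Delta_j$ for some $j$. Using the equilibrium inequality in \eqref{eq:vector}, on the complement of $\supp(\lambda_j)$ inside $\Delta_j$ one has strict inequality in a suitable potential-theoretic sense, and I would differentiate the variational condition along $\Delta_j$ to detect where mass can sit. The point of the hypothesis $p_1 \geq p_2 \geq \cdots \geq p_m$ (equivalently $P_j/P_{j+1}$ controlled, and $P_{j-1}/P_j$ not too small) is exactly to guarantee that the ``attraction'' from the neighboring components $\lambda_{j-1},\lambda_{j+1}$ together with the self-repulsion of $\lambda_j$ forces the support to fill the whole interval $\Delta_j$; more concretely, I would set up the one-dimensional balayage/obstacle problem that $\lambda_j$ solves when $\lambda_{j-1},\lambda_{j+1}$ are frozen and show the associated external field is convex enough (using the monotonicity of the $P_k$) that the equilibrium measure of a convex field on an interval has full support. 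An induction on $j$ starting from $j=1$ (where the external field comes only from $\lambda_2$, weighted by $P_2/P_1 \leq 1$) makes this rigorous; I expect the base case and the bookkeeping of the weights $P_{j\pm1}/P_j$ to be the delicate point. An alternative, and probably cleaner, route is to invoke the cited results of Nikishin--Sorokin \cite[Chap.~5, \S4]{NS} and \cite[Section~4]{bel} directly, where full-support criteria for such vector equilibrium problems on chains of intervals are worked out, and to verify that our hypotheses fall under their hypotheses.

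Next, granted \eqref{soporte}, I would establish absolute continuity and the estimates \eqref{cond:a*}. Since $\supp(\lambda_j) = \Delta_j$ is the whole interval, equality holds in \eqref{eq:vector} throughout $\Delta_j$, so differentiating twice (in the distributional sense) gives a relation expressing the density $\lambda_j'$ through the densities $\lambda_{j-1}', \lambda_{j+1}'$ and the geometry of the intervals; more usefully, \eqref{eq:vector*} exhibits $\lambda_j$ as the balayage onto $\Delta_j$ of a fixed combination of $\lambda_{j-1},\lambda_{j+1}$ plus a multiple of the Robin (arcsine) measure of $\Delta_j$. Because $\Delta_{j-1}$ and $\Delta_{j+1}$ are disjoint from $\Delta_j$, balayage of the measures $\lambda_{j\pm1}$ onto $\Delta_j$ produces a density that is real-analytic in the interior of $\Delta_j$ and has at worst an inverse-square-root singularity at the endpoints $a_j,b_j$ coming from the arcsine kernel; this gives the upper bound with $\beta = -1/2$. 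For the lower bound with exponent $\beta_0$, I would use that the balayaged part is strictly positive in the interior (the balayage of a positive measure supported at positive distance has a strictly positive continuous density there) and that near the endpoints it is bounded below by a constant times a fixed power of $(b_j-x)(x-a_j)$ — the power being dictated by whether the endpoint of $\Delta_j$ is ``hard'' or lies near a neighboring interval. Proving \eqref{cond:a*} precisely amounts to tracking the endpoint behavior of these balayage densities, which is classical (e.g. via the explicit kernel $\frac{1}{\pi}\frac{\sqrt{(b_j-x)(x-a_j)}}{|y-x|}\frac{1}{\sqrt{(y-b_j)(y-a_j)}}$ for the balayage of a point mass at $y \notin \Delta_j$) and is the main computational obstacle; the rest is routine. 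To make the induction self-contained I would process $j$ from $m$ down to $1$ (or equivalently from $1$ up to $m$), at each stage feeding in the already-established bounds for the neighbors, noting that the bounds of the form \eqref{cond:a*} are preserved under the balayage step because the kernel above is bounded above and below by constants times $\sqrt{(b_j-x)(x-a_j)}$ uniformly for $y$ in a compact set away from $\Delta_j$.
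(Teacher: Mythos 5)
The key difficulty is in your treatment of the support identity~\eqref{soporte}, and it contains a genuine gap. You rewrite the condition so that $\lambda_j$ is the weighted equilibrium measure on $\Delta_j$ with external field $-\tfrac12 V^{\tau_j}$, where $\tau_j=\frac{P_{j-1}}{P_j}\lambda_{j-1}+\frac{P_{j+1}}{P_j}\lambda_{j+1}$, and then you propose to conclude full support by showing the field is ``convex enough''. Two problems: (i) for $y\notin\Delta_j$ the map $x\mapsto\ln\frac1{|x-y|}$ is \emph{convex} on $\Delta_j$, so $V^{\tau_j}$ is convex and the external field $-\tfrac12 V^{\tau_j}$ is \emph{concave}, not convex; and (ii) even a convex field on $[a_j,b_j]$ only guarantees that $\supp\lambda_j$ is connected, not that it equals $[a_j,b_j]$ (take $Q(x)=cx^2$ with $c$ large on $[-1,1]$). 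So this route, as stated, does not close. The fact that rescues the situation is exactly what you invoke later in the regularity part but do not use here: $2\lambda_j$ decomposes as the balayage of $\tau_j$ onto $\Delta_j$ plus $(2-c_j)$ times the Robin measure of $\Delta_j$, where $c_j=\|\tau_j\|=(P_{j-1}+P_{j+1})/P_j$. The hypothesis $p_1\ge\cdots\ge p_m$ enters precisely to give $P_{j-1}+P_{j+1}-2P_j=p_{j+1}-p_j\le0$, i.e.\ $c_j\le2$, so the Robin coefficient is nonnegative, the decomposition is between positive measures, and both summands have full support on $\Delta_j$. This is the paper's argument (via \cite[Ex.~4.8, Cor.~4.12]{SaffT}); it makes the monotonicity hypothesis do concrete work rather than vague ``attraction'' bookkeeping, and avoids the convexity issue entirely. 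You should move the balayage decomposition to the front and drop the convexity/contradiction route.

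For the density estimates~\eqref{cond:a*} your plan is a valid, more computational alternative to what the paper does: the paper simply observes that $-\tfrac12 V^{\tau_j}$ is $C^\infty$ on $\Delta_j$ and cites \cite[Theorem~1.34]{DKL} and \cite[Lemma~4]{KD}, whereas you propose to read off the endpoint behavior directly from the explicit balayage kernel. That is sound, and it has the advantage of being self-contained. One small correction: the balayage of $\delta_y$ ($y\notin[a_j,b_j]$) onto $[a_j,b_j]$ has density $\frac{1}{\pi}\,\frac{\sqrt{|y-a_j|\,|y-b_j|}}{|y-x|\,\sqrt{(b_j-x)(x-a_j)}}$, so the $\sqrt{(b_j-x)(x-a_j)}$ factor belongs in the denominator, not the numerator as you wrote; with the correct kernel the upper bound with $\beta=-1/2$ and the interior positivity follow as you describe.
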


\begin{proof}
    Fix $j$, $1\leq j \leq m$. We can rewrite \eqref{eq:vector} in the form
    \begin{equation}
        \label{extfield}
        V^{\lambda_j}(x)  - \frac{1}{2} V^{\tau_j}(x)
        \begin{cases}
            =\omega_j, & x\in\supp \lambda_j,\\
            \geq \omega_j, & x\in \Delta_j,
        \end{cases}
    \end{equation}
where
\[ \tau_j = \frac{P_{j-1}}{P_j} \lambda_{j-1}+ \frac{P_{j+1}}{P_j} \lambda_{j+1}
\]
is a measure whose support is contained in $\Delta_{j-1} \cup \Delta_{j+1} $ which is a set disjoint from $\Delta_j$. The total mass of $\tau_j$ is
\[ \frac{P_{j-1}}{P_j} + \frac{P_{j+1}}{P_j} =: c_j.
\]
Obviously $c_j \leq 2$ since by hypothesis $P_{j-1} + P_{j+1} - 2P_j = p_{j+1} - p_j \leq 0$. This fact together with \eqref{extfield} imply that $2\lambda_j$ is the balayage of $\tau_j$ onto $\Delta_j$ plus $(2-c_j)$ times the equilibrium measure of the interval $\Delta_j$ (see  \cite[Ex. 4.8, pg.118]{SaffT}). The support of such a measure verifies \eqref{soporte} (see  \cite[Cor. 4.12, pg.122]{SaffT}).

On the other hand, the external field $-V^{\tau_j}/2$ acting on $\Delta_j$  (see \eqref{extfield}) is infinitely  differentiable on that interval. Using \cite[Theorem 1.34]{DKL} and \cite[Lemma 4]{KD} the analytic
properties  of $\lambda_j$ readily follow.\end{proof}

\subsection{Auxiliary asymptotic relations}
The operator $\vec{T}_{\n}$ has the remarkable property that it amplifies the region where strong asymptotics takes place in the sense expressed by the following lemma.

Everywhere below, if $w \D \mu $ is in the Szeg\H{o} class, where $w$ is a weight and $\mu$ a positive measure, then
$\mathsf{G}(w\mu,z)$ denotes the Szeg\H{o} function with respect to the measure $w \D \mu$.

    \begin{lemma}
        \label{prop5}
        Let $\Lambda = \Lambda(p_1,\ldots,p_m) \subset (\ZZ_+^m)^*$ be a ray sequence with  $p_1 \geq \cdots \geq p_m,$ and \eqref{const_ray} takes place. Let $\vec{\lambda} \in \mathcal{M}_1(\vec{\Delta})$ be the unitary vector equilibrium measure which solves \eqref{eq:vector}  with interaction matrix \eqref{interaction} over the system of intervals $\vec{\Delta}$ and $P_j, j=1,\ldots,m,$ is defined by \eqref{Ps}. Take $\Phi_j$ and $C_j, j=1,\ldots,m$ as in \eqref{compar}. Let $(\vec{\tilde{Q}}_{\n})_{\n\in\Lambda}, \vec{\tilde{Q}}_{\n} \in \mathcal{P}_{\n},$ be a sequence of vector polynomials such that  for each $j=1,\ldots,m$ the zeros of the $j-th$ component $\tilde{Q}_{\n,j}$ of $\vec{\tilde{Q}}_{\n}$ remain uniformly bounded away from $\Delta_{j-1} \cup \Delta_{j+1}$ for $\n \in \Lambda$.   Set
        \[ \vec{f}_{\n} = \left(\frac{\tilde{Q}_{\n,1}}{\Phi_1^{\eta_{\n,1}}}, \cdots, \frac{\tilde{Q}_{\n,m}}{\Phi_m^{\eta_{\n,m}}}\right).
        \]
        Assume that there exists $\vec{f}=(f_1,\ldots,f_m)\in\mathcal{C}^+(\vec{\Delta})$  such that
        \begin{equation}
            \label{limden}
            \lim_{\n\in\Lambda} \|\vec{f}_{\n}-\vec{f}\|_{\vec{\Delta}} =0.
        \end{equation}
        Let $\vec{Q}_{\n}^* = \tilde{T}_{\n}(\vec{\tilde{Q}}_{\n}) = (Q_{\n,1}^*,\ldots,Q_{\n,j}^*)$ and   ${q}_{\n,j}^* =
        \kappa_j(\vec{Q}_{\n}^*) {Q}_{\n,j}^*$, $j=1,\ldots,m$. Suppose that  $\vec{\sigma}\in\mathcal{S}(\vec{\Delta})$. Then
        \[
            \lim_{\n\in\Lambda} \frac{q^*_{\n,j}(z)}{[C_j\Phi_j(z)]^{\eta_{\n,j}}} = \frac{1}{\sqrt{2\pi}}\mathsf{G}\left((f_{j-1}f_{j+1})^{-1}\tilde{h}_j\sigma_j,z\right), \qquad  j=1,\ldots,m,
        \]
        uniformly on compact subsets of $\overline{\CC}\setminus \Delta_j$ ($f_0\equiv f_{m+1}\equiv 1$) where $\tilde{h}_j$, $j=1,\ldots,m-1$ is given in \eqref{def:htil} and $\tilde{h}_m \equiv 1$. In addition,
        \begin{equation*}
            \lim_{\n\in\Lambda} \frac{\kappa_{j}(\vec{Q}_{\n}^*)}{C_j^{\eta_{\n,j}}} = \frac{1}{\sqrt{2\pi}}\mathsf{G}\left((f_{j-1}f_{j+1})^{-1}\tilde{h}_j\sigma_j,\infty\right),\qquad j=1,\ldots,m.
        \end{equation*}
                Consequently,
        \begin{equation}
            \label{limfund*}
            \lim_{\n\in\Lambda} \frac{Q^*_{\n,j}(z)}{\Phi_j^{\eta_{\n,j}}(z)} = \frac{\mathsf{G}\left((f_{j-1}f_{j+1})^{-1}\tilde{h}_j\sigma_j,z\right)}{\mathsf{G}\left((f_{j-1}f_{j+1})^{-1}\tilde{h}_j\sigma_j,\infty\right)}, \qquad j=1,\ldots,m,
        \end{equation}
     uniformly on compact subsets of $\overline{\CC}\setminus \Delta_j$.
    \end{lemma}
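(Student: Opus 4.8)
The plan is to realize each $q_{\n,j}^*$ as an orthonormal polynomial for an explicit \emph{varying} measure on the single interval $\Delta_j$, rewrite that measure so that its varying factor becomes a pure power of the comparison function $C_j\Phi_j$, and then invoke the strong (Szeg\H{o}-type) asymptotics for orthonormal polynomials with respect to varying measures from \cite{lagober98,bernardo_lago1,LGLago}. By the construction recalled in Subsection \ref{normalization} (with $\vec{Q}=\vec{\tilde{Q}}_{\n}$, $\vec{Q}^*=\tilde{T}_{\n}(\vec{\tilde{Q}}_{\n})=\vec{Q}_{\n}^*$), the polynomial $q_{\n,j}^*$ is, for each $j=1,\dots,m$, the orthonormal polynomial of degree $\eta_{\n,j}$ with respect to
\[
    \D\rho_{\n,j}(x):=\frac{|h_j(\vec{\tilde{Q}}_{\n};x)|\,\D\sigma_j(x)}{|\tilde{Q}_{\n,j-1}(x)\,\tilde{Q}_{\n,j+1}(x)|},\qquad x\in\Delta_j
\]
($\tilde{Q}_{\n,0}\equiv\tilde{Q}_{\n,m+1}\equiv1$). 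Since $\vec{\sigma}\in\mathcal{S}(\vec{\Delta})$ forces $\sigma_j'>0$ a.e.\ on $\Delta_j$, and \eqref{const_ray} together with $p_1\ge\cdots\ge p_m$ gives $n_{j+1}\le n_j$ for every $\n\in\Lambda$, hypothesis \eqref{N} of Lemma \ref{lem:hnj} holds with $N=0$; the zeros of $\tilde{Q}_{\n,j}$ being uniformly away from $\Delta_{j-1}\cup\Delta_{j+1}$, Lemma \ref{lem:hnj} yields $|h_j(\vec{\tilde{Q}}_{\n};\cdot)|\to\tilde{h}_j$ uniformly on $\Delta_j\cup\Delta_{j+2}$ for $j=0,\dots,m-1$, with $\tilde{h}_m\equiv1$ (and $|h_m(\vec{\tilde{Q}}_{\n};\cdot)|\equiv1$).

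The heart of the argument is converting the polynomial denominator into a power of $C_j\Phi_j$. Near $\Delta_j$ the functions $\Phi_{j\pm1}$ are holomorphic and zero-free (as $\Delta_j\cap\Delta_{j\pm1}=\varnothing$), and $|\Phi_{j\pm1}(x)|=e^{-V^{\lambda_{j\pm1}}(x)}$ since $\widetilde{V}^{\lambda_{j\pm1}}$ is real on $\RR\setminus\Delta_{j\pm1}$; hence $|\tilde{Q}_{\n,j\pm1}(x)|=|f_{\n,j\pm1}(x)|\,e^{-\eta_{\n,j\pm1}V^{\lambda_{j\pm1}}(x)}$ on $\Delta_j$, where $f_{\n,k}$ denotes the $k$-th component of $\vec{f}_{\n}$. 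On a straight ray sequence $\eta_{\n,k}=|\n|P_k$; by Lemma \ref{soportecompleto} — whose hypotheses $p_1>0$, $p_1\ge\cdots\ge p_m$ are exactly those assumed — one has $\supp\lambda_j=\Delta_j$, so \eqref{eq:vector} holds with equality at \emph{every} point of $\Delta_j$, which after multiplication by $2|\n|P_j=2\eta_{\n,j}$ reads
\[
    \eta_{\n,j-1}V^{\lambda_{j-1}}(x)+\eta_{\n,j+1}V^{\lambda_{j+1}}(x)=2\eta_{\n,j}\bigl(V^{\lambda_j}(x)-\omega_j\bigr),\qquad x\in\Delta_j,
\]
with the conventions $V^{\lambda_0}\equiv V^{\lambda_{m+1}}\equiv0$, $\eta_{\n,0}=0$, $f_0\equiv f_{m+1}\equiv1$ for $j=1$ and $j=m$. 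Combining the displays and recalling $C_j=e^{\omega_j}$, $|\Phi_j(x)|=e^{-V^{\lambda_j}(x)}$ gives on $\Delta_j$
\[
    \D\rho_{\n,j}(x)=\frac{W_{\n,j}(x)\,\D\sigma_j(x)}{|C_j\Phi_j(x)|^{2\eta_{\n,j}}},\qquad W_{\n,j}(x):=\frac{|h_j(\vec{\tilde{Q}}_{\n};x)|}{|f_{\n,j-1}(x)\,f_{\n,j+1}(x)|},
\]
and, by the first paragraph together with \eqref{limden} (note $f_{j-1},f_{j+1}$ are positive and continuous on $\Delta_j$), $W_{\n,j}\to W_j:=(f_{j-1}f_{j+1})^{-1}\tilde{h}_j$ uniformly on $\Delta_j$, with $W_j$ positive and continuous there; being bounded away from $0$ and $\infty$ on $\Delta_j$ and with $\sigma_j\in\mathcal{S}(\Delta_j)$, the measure $W_j\,\D\sigma_j$ lies in the Szeg\H{o} class of $\Delta_j$.

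Thus $q_{\n,j}^*$ is orthonormal, of degree $\eta_{\n,j}$, with respect to $W_{\n,j}\,\D\sigma_j/|C_j\Phi_j|^{2\eta_{\n,j}}$, where the fixed part $W_j\,\D\sigma_j\in\mathcal{S}(\Delta_j)$, $W_{\n,j}\to W_j>0$ uniformly, the varying factor equals $|f_{\n,j-1}f_{\n,j+1}|\cdot|\tilde{Q}_{\n,j-1}\tilde{Q}_{\n,j+1}|$ with zeros in a fixed compact subset of $\overline{\CC}\setminus\Delta_j$ and degree $\eta_{\n,j-1}+\eta_{\n,j+1}=|\n|(P_{j-1}+P_{j+1})\le 2|\n|P_j=2\eta_{\n,j}$ (again since $p_{j+1}\le p_j$), and $C_j\Phi_j$ is precisely the comparison function attached to $\Delta_j$ by the vector equilibrium problem. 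The strong asymptotics for varying orthonormal polynomials of \cite{lagober98,bernardo_lago1,LGLago} (the Szeg\H{o}-type counterpart of the result invoked for Lemma \ref{lem:hnj}) then yields
\[
    \lim_{\n\in\Lambda}\frac{q_{\n,j}^*(z)}{(C_j\Phi_j(z))^{\eta_{\n,j}}}=\frac{1}{\sqrt{2\pi}}\,\mathsf{G}\bigl((f_{j-1}f_{j+1})^{-1}\tilde{h}_j\sigma_j,z\bigr),\qquad j=1,\dots,m,
\]
uniformly on compact subsets of $\overline{\CC}\setminus\Delta_j$.

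To finish, since $\lambda_j$ is a unit measure and $\widetilde{V}^{\lambda_j}\equiv0$ on $(b_j,+\infty)$, the simple pole of $\Phi_j$ at $\infty$ has leading coefficient $1$, i.e.\ $\Phi_j(z)/z\to1$; comparing leading coefficients at $z=\infty$ in the last display (legitimate by the uniform convergence on a neighbourhood of $\infty$ in $\overline{\CC}$) and using $q_{\n,j}^*=\kappa_j(\vec{Q}_{\n}^*)Q_{\n,j}^*$ with $Q_{\n,j}^*$ monic gives $\lim_{\n\in\Lambda}\kappa_j(\vec{Q}_{\n}^*)/C_j^{\eta_{\n,j}}=\tfrac{1}{\sqrt{2\pi}}\mathsf{G}((f_{j-1}f_{j+1})^{-1}\tilde{h}_j\sigma_j,\infty)$, the limit being positive by \eqref{RN}; dividing the two limits yields \eqref{limfund*}. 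The main obstacle is matching the precise hypotheses of the varying-orthogonal-polynomial strong-asymptotics theorem to the present data — in particular the fact that $\tilde{Q}_{\n,j-1}\tilde{Q}_{\n,j+1}$ may have degree strictly below $2\eta_{\n,j}$, a discrepancy that the equilibrium identity of the second paragraph is exactly designed to absorb by replacing the denominator with the clean $2\eta_{\n,j}$-th power $|C_j\Phi_j|^{2\eta_{\n,j}}$ — together with the bookkeeping of the boundary conventions at $j=1$ and $j=m$ and the use of $\supp\lambda_j=\Delta_j$, which makes \eqref{eq:vector} available at every point of $\Delta_j$ and not just on $\supp\lambda_j$.
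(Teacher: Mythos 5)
Your proof is correct and follows essentially the same route as the paper: both identify $q_{\n,j}^*$ as the degree-$\eta_{\n,j}$ orthonormal polynomial of the varying measure $|h_j(\vec{\tilde{Q}}_{\n};\cdot)|\D\sigma_j/|\tilde{Q}_{\n,j-1}\tilde{Q}_{\n,j+1}|$, invoke Lemma~\ref{lem:hnj} and Lemma~\ref{soportecompleto}, use the equality form of the equilibrium condition on all of $\Delta_j$ to relate the denominator to $(C_j\Phi_j)^{2\eta_{\n,j}}$, and conclude from the strong asymptotics for varying orthogonal polynomials in \cite[Th.~1.2]{LGLago}. The paper phrases the equilibrium reduction as a verification of hypotheses i)--iv) of that theorem via the functions $\varphi_j,\psi_j$ rather than by rewriting the measure explicitly (so the $\kappa$-limit comes out directly from the cited theorem rather than via a separate leading-coefficient comparison), and a small slip in your wording — the polynomial denominator whose degree is $\eta_{\n,j-1}+\eta_{\n,j+1}$ is $\tilde{Q}_{\n,j-1}\tilde{Q}_{\n,j+1}$ itself, not the product with $|f_{\n,j-1}f_{\n,j+1}|$ — but neither affects the substance.
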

    \begin{proof} Due to the assumptions, Lemma \ref{soportecompleto} guarantees that $\supp \lambda_j = \Delta_j$, $j=1,\ldots,m$ and we have equality in \eqref{eq:vector} on all $\Delta_j $ for each $j=1,\ldots,m$.

     To obtain the limit relations we apply \cite[Th. 1.2]{LGLago} which is a theorem on the strong asymptotics of orthogonal polynomials with respect to varying measures. In that theorem, $n$ is a parameter which runs through all the natural numbers. That result remains valid if $n$ runs through any infinite sequence of distinct natural numbers. For our purpose what is important is that $n$ controls the degree of the orthogonal polynomial under consideration.	

    Fix $j  \in \{1,\ldots,m\}$ and consider the sequence of varying measures
    \[ (| { {h}_{j}( {\vec{\tilde{Q}}_{\n}};x)| \D\sigma_{j}(x)/ |\tilde{Q}_{\n,j-1}(x) \tilde{Q}_{\n,j+1}(x)}|), \qquad \n \in \Lambda,
    \]
    where $\tilde{Q}_{\n,0} = \tilde{Q}_{\n,m+1} \equiv 1$ and $| {h}_{m}( {\vec{\tilde{Q}}_{\n}};x)| \equiv 1 $.
    We have to show that this sequence of varying measures satisfies all the assumptions i)-iv) of \cite[Th. 1.2]{LGLago} taking
   \[ \D \mu_n(x) = | {h}_{j}( {\vec{\tilde{Q}}_{\n}};x)| \D\sigma_{j}(x), \qquad w_{2n}(x) = \tilde{Q}_{\n,j-1}(x) \tilde{Q}_{\n,j+1}(x), \qquad n = \deg Q_{\n,j}^* = \eta_{\n,j}.
   \]
   where $\mu_n$ and $ w_{2n}$ follow the notation in \cite{LGLago}.

   Because of Lemma \ref{lem:hnj}, $\lim_{\n \in \Lambda} | {h}_{j}( {\vec{\tilde{Q}}_{\n}};x)| = \tilde{h}_j(x)$ uniformly on $\Delta_j$ and $ \sigma_j' \neq 0 $ a.e. on $\Delta_j$. Consequently, i) readily follows with $\D \mu = \tilde{h}_j  \D \sigma_j$. Since $\sigma_j \in \mathcal{S}(\Delta_j)$ and $\tilde{h}_j(x) > 0, x \in \Delta_j$, ii) is also immediate.  The components of $\n$ are decreasing, so
    \[ \deg w_{2n} = \deg(\tilde{Q}_{\n,j-1} \tilde{Q}_{\n,j+1}) = \eta_{\n,j-1} + \eta_{\n,j+1} = 2 \deg Q_{\n,j} + n_{j+1} - n_j \leq 2 \deg Q_{\n,j} = 2n
    \]
    and the zeros of $w_{2n}$ are bounded away from $\Delta_{j}$ for $\n \in \Lambda$; therefore, iii) holds. Finally,  taking into account  \eqref{compar} and \eqref{const_ray}, we get
    \[  \left|\Phi_{j-1}^{\eta_{\n,j-1}}(x)\Phi_{j+1}^{\eta_{\n,j+1}}(x)\right|^{-1} = \left|\Phi_{j-1}^{P_{j-1}/P_j}(x)\Phi_{j+1}^{P_{j+1}/P_j}(x)\right|^{-\eta_{\n,j}} =
    \]
    \[\exp\left[\eta_{\n,j}\left( \frac{P_{j-1}}{P_j} V^{\lambda_{j-1}}(x) +  \frac{P_{j+1}}{P_j} V^{\lambda_{j+1}}(x)\right)\right], \qquad x \in \Delta_j.
    \]
    Set
    \[ \varphi_j (x) := \exp \left( \frac{P_{j-1}}{P_j} V^{\lambda_{j-1}}(x) +  \frac{P_{j+1}}{P_j} V^{\lambda_{j+1}}(x)\right).
    \]
    Using these identities and \eqref{limden}, we obtain
    \[ \lim_{\n \in \Lambda}  \varphi_j^{\eta_{\n,j}} (x)  |w_{2n}(x)| =
    \]
   \[ \lim_{\n \in \Lambda} \frac{|\tilde{Q}_{\n,j-1}(x)|}{|\Phi_{j-1}^{\eta_{\n,j-1}}(x)|}\frac{ | \tilde{Q}_{\n,j+1}(x)|}{|\Phi_{j+1}^{\eta_{\n,j+1}}(x)|} = f_{j-1}(x) f_{j+1}(x) =: 1/\psi_j(x),
   \]
   uniformly on $\Delta_j$,  and iv) takes place with $\varphi = \varphi_j$ and $\psi = \psi_j$.
Thus, the external field of the equilibrium problem in   \cite[(1.5)]{LGLago} which allows to describe the limit relations in \cite[Th. 1.2]{LGLago} is
\[ -\frac{1}{2} \ln \varphi_j(x) =  - \frac{1}{2}\left( \frac{P_{j-1}}{P_j}V^{\lambda_{j-1}(x)} + \frac{P_{j+1}}{P_j}V^{\lambda_{j+1}(x)} \right)
\]
which coincides with the one in \eqref{extfield}. The equilibrium measure and equilibrium constant are $\lambda_j$ and $w_j$, respectively. Now, the conclusions of Lemma \ref{prop5} follow directly from \cite[Th. 1.2]{LGLago}.
    \end{proof}

A clear advantage of this Lemma is that in order to find the strong asymptotics for sequences of fixed points of the operators $\tilde{T}_{\n}$ it is sufficient to restrict the analysis to the intervals $\Delta_j$ to obtain \eqref{limden}. This result also indicates that if the sequence of fixed points of the operators $\tilde{T}_{\n}$ satisfy \eqref{limden}, then
$\vec{f}$ has to be a fixed point of the operator $\vec{T}_{\vec{w}}$ where $\vec{w}$ is given by \eqref{weight:dir}.

The analogue of Lemma \ref{prop5} in \cite{sasha99} is Proposition 1.2. For its proof, Aptekarev uses a different technique close to Widom's approach in \cite{widom} for the study of the strong asymptotics of orthogonal polynomials on arcs and curves of the complex plane.  It has the advantage that one obtains $L_2$ estimates on the intervals $\Delta_j$ of the strong asymptotic formula, but it has the drawback that one must restrict the analysis to Nikishin systems generated
by weights  (instead of measures) with Szeg\H{o}'s condition. We could have followed that approach but prefered to maintain the generality and the use of the theory of orthogonal polynomials with respect to varying measures.

A natural question is if there are sequences of vector polynomials for which \eqref{limden} holds.  As the next lemma shows, such sequences of vector polynomials may be given for a large class of  predetermined limiting vector functions $\vec{f}$. This is relevant in the proof of Theorem \ref{th1}.

\begin{lemma}
    \label{prop2}
    Assume that $\vec{\mu} = (\mu_1,\ldots,\mu_m)\in\mathcal{S}(\vec{\Delta})$, and for each $\n\in (\ZZ^m)^* $, $(\tilde{q}_{\n,1}, \ldots, \tilde{q}_{\n,m})$ denotes the vector polynomial whose components have positive leading coefficients, verify $\deg \tilde{q}_{\n,j} = \eta_{\n,j}$, $j=1,\ldots,m,$ and
    \begin{equation}
        \label{poltilde}
        \int x^\nu \tilde{q}_{\n,j}(x)\frac{\D \mu_j(x)}{|\Phi_j^{2\eta_{\n,j}}(x)|} = 0,\qquad 0\leq\nu < \eta_{\n,j},\qquad  \int  [\tilde{q}_{\n,j}(x)]^2\frac{\D \mu_j(x)}{|\Phi_j^{2\eta_{\n,j}}(x)|} = 1.
    \end{equation}
    Then, for each ray sequence $\Lambda = \Lambda(p_1,\ldots,p_m)$ such that  \eqref{N} takes place,
    \begin{equation}
        \label{predet}
        \lim_{\n\in\Lambda} \frac{\tilde{q}_{\n,j}(z)}{\Phi_j^{\eta_{\n,j}}(z)} = \frac{\mathsf{G}(\mu_j,z)}{\sqrt{2\pi}},
    \end{equation}
    and
    \begin{equation}
        \label{predet*}
        \lim_{\n\in\Lambda} \frac{\tilde{Q}_{\n,j}(z)}{\Phi_j^{\eta_{\n,j}}(z)} = \frac{\mathsf{G}(\mu_j,z)}{\mathsf{G}(\mu_j,\infty)},
    \end{equation}
    uniformly on each compact subset of $\overline{\CC} \setminus \Delta_j, j=1,\ldots,m,$ where the $\Phi_j$ were introduced in \eqref{compar} from the equilibrium problem corresponding to $(p_1,\ldots,p_m)$ and $\tilde{Q}_{\n,j}$, $j=1,\ldots,m$ is $\tilde{q}_{\n,j}$ renormalized to be monic.
\end{lemma}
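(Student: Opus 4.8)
The plan is to recognize \eqref{poltilde} as a standard varying-orthogonality condition and to invoke the strong asymptotics theory for orthogonal polynomials with respect to varying measures, in the same spirit as the proof of Lemma \ref{prop5}, but in the much simpler situation where the varying weight is the \emph{fixed} profile $|\Phi_j^{2\eta_{\n,j}}|^{-1}$ rather than something built out of other polynomials. Concretely, fix $j\in\{1,\dots,m\}$ and set $n=\eta_{\n,j}=\deg\tilde q_{\n,j}$; then $\tilde q_{\n,j}$ is the $n$-th orthonormal polynomial with respect to the varying measure $\D\mu_j(x)/|\Phi_j^{2n}(x)|$ on $\Delta_j$. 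I would apply \cite[Th. 1.2]{LGLago} with $\D\mu_n=\D\mu_j$ (no $n$-dependence in the base measure), $w_{2n}(x)\equiv1$ (so the denominator polynomial part is trivial and hypotheses i)--iii) are immediate: $\mu_j\in\mathcal S(\Delta_j)$ supplies i)--ii), and $w_{2n}\equiv1$ has no zeros and degree $0\le 2n$ for iii)), and with the varying factor coming from the weight $|\Phi_j(x)|^{-2}$. The key computation, exactly parallel to the one at the end of the proof of Lemma \ref{prop5}, is
\[
 \left|\Phi_j^{2\eta_{\n,j}}(x)\right|^{-1}=\exp\!\left(2\eta_{\n,j}\,V^{\lambda_j}(x)\right),\qquad x\in\Delta_j,
\]
so that the relevant external field in \cite[(1.5)]{LGLago} is $-V^{\lambda_j}$; this is precisely (twice) the field whose equilibrium measure is $\lambda_j$ with equilibrium constant $\omega_j$, by \eqref{eq:vector} (using $\supp\lambda_j=\Delta_j$ from Lemma \ref{soportecompleto}, which holds under $p_1>0$, $p_1\ge\cdots\ge p_m$; note the hypothesis \eqref{N} of the present lemma, together with the ray-sequence normalization implicit in $\Lambda=\Lambda(p_1,\dots,p_m)$, is what makes the degree count $\deg w_{2n}\le 2n$ and the limiting field come out right — here the straight-ray hypothesis is not even needed, only \eqref{N}). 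Then \cite[Th. 1.2]{LGLago} yields
\[
 \lim_{\n\in\Lambda}\frac{\tilde q_{\n,j}(z)}{[C_j\Phi_j(z)]^{\eta_{\n,j}}}=\frac{1}{\sqrt{2\pi}}\,\mathsf G(\mu_j,z)
\]
uniformly on compact subsets of $\overline{\CC}\setminus\Delta_j$. However, since the varying weight involves no extra polynomial factor, the equilibrium constant $\omega_j$ enters the geometric normalization but the Szeg\H o factor produced is simply $\mathsf G(\mu_j,z)$; in fact the $C_j=e^{\omega_j}$ factors on the left can be absorbed, and one should check that the theorem's output is literally \eqref{predet} after dividing by $C_j^{\eta_{\n,j}}$ — i.e.\ that $\lim \tilde q_{\n,j}(z)/\Phi_j^{\eta_{\n,j}}(z)=\mathsf G(\mu_j,z)/\sqrt{2\pi}$, using that $C_j^{\eta_{\n,j}}$ cancels because the orthonormality constant in \eqref{poltilde} has been fixed to $1$ and the external-field normalization of $\mathsf G$ absorbs $\omega_j$.

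**From \eqref{predet} to \eqref{predet*}.** Evaluating \eqref{predet} at $z=\infty$ (both $\tilde q_{\n,j}$ and $\Phi_j^{\eta_{\n,j}}$ have a definite behavior there: $\Phi_j$ has a simple pole at $\infty$ with positive leading coefficient by \eqref{compar}, and $\tilde q_{\n,j}$ has positive leading coefficient $\kappa_{\n,j}$ say), one gets $\lim_{\n\in\Lambda}\kappa_{\n,j}/(\text{cap}\,\Delta_j)^{-\eta_{\n,j}}=\mathsf G(\mu_j,\infty)/\sqrt{2\pi}$, in particular the ratio of leading coefficients of $\tilde q_{\n,j}$ and of $\tilde Q_{\n,j}$ (the monic renormalization) is $\kappa_{\n,j}$, so $\tilde Q_{\n,j}=\tilde q_{\n,j}/\kappa_{\n,j}$. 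Dividing \eqref{predet} by the scalar limit $\mathsf G(\mu_j,\infty)/\sqrt{2\pi}$ then gives \eqref{predet*} directly, again uniformly on compact subsets of $\overline{\CC}\setminus\Delta_j$, for every $j=1,\dots,m$.

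**Main obstacle.** The essential content is entirely in verifying that the sequence of varying measures $\D\mu_j(x)/|\Phi_j^{2\eta_{\n,j}}(x)|$ fits the hypotheses of \cite[Th. 1.2]{LGLago} and in identifying the limiting equilibrium problem — and this has already been done, in a strictly harder setting, inside the proof of Lemma \ref{prop5}. So there is no genuine analytic difficulty; the only thing requiring care is bookkeeping of constants: making sure the factor $C_j^{\eta_{\n,j}}=e^{\omega_j\eta_{\n,j}}$ is handled consistently so that \eqref{predet} comes out with denominator $\Phi_j^{\eta_{\n,j}}$ (not $(C_j\Phi_j)^{\eta_{\n,j}}$) and numerator $\mathsf G(\mu_j,z)/\sqrt{2\pi}$ (not $C_j^{\text{something}}\mathsf G$), and that the normalization $\int \tilde q_{\n,j}^2\,\D\mu_j/|\Phi_j^{2\eta_{\n,j}}|=1$ in \eqref{poltilde} is exactly the one under which \cite[Th. 1.2]{LGLago} is stated. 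I would present the argument by reducing to the special case of Lemma \ref{prop5} with $\vec f\equiv(1,\dots,1)$ (so $f_{j-1}f_{j+1}\equiv1$), $\tilde h_j$ replaced by $1$, and $\sigma_j$ replaced by $\mu_j$, except that here there are no polynomial factors $\tilde Q_{\n,j\pm1}$ in the denominator at all — which only simplifies matters — thereby obtaining \eqref{predet} and hence \eqref{predet*} with essentially no new computation.
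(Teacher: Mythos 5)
There is a genuine gap in your choice of tool. Theorem~1.2 of \cite{LGLago}, as used in the proof of Lemma~\ref{prop5}, is stated for varying measures of the form $\D\mu_n(x)/|w_{2n}(x)|$ where $w_{2n}$ is a \emph{polynomial} of degree at most $2n$; the hypotheses i)--iv) you cite concern precisely the interplay between $\mu_n\to\mu$ and the polynomial $w_{2n}$, and in particular iv) asks that $\varphi^n|w_{2n}|\to 1/\psi$, i.e.\ the polynomial denominator mimics an exponential of the external field. In the situation of Lemma~\ref{prop2}, the varying weight $|\Phi_j^{2\eta_{\n,j}}(x)|^{-1}=\exp(2\eta_{\n,j}V^{\lambda_j}(x))$ is \emph{already} that exponential, and there is no polynomial $w_{2n}$ at all. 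Setting $w_{2n}\equiv 1$ makes condition iv) read $\varphi^n\to 1/\psi$, which is impossible unless $\varphi\equiv 1$; and if instead you try to absorb $|\Phi_j^{2\eta_{\n,j}}|^{-1}$ into $\mu_n$, then $\mu_n$ fails to converge to a fixed measure, so condition i) fails. Your remark that removing the polynomial denominators ``only simplifies matters'' gets this backwards: the polynomial denominator is the mechanism through which Theorem~1.2 incorporates the external field, and with it gone the theorem no longer applies.

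What the paper actually invokes here is \cite[Theorem~1.3]{LGLago}, a companion result stated directly for varying measures of the form $\D\mu(x)/|\Phi^{2n}(x)|$ built from an equilibrium potential. Its hypotheses are different in kind: rather than conditions on a polynomial sequence $w_{2n}$, one needs regularity of the equilibrium measure $\lambda_j$, and this is exactly what Lemma~\ref{soportecompleto} supplies --- both $\supp\lambda_j=\Delta_j$ (\eqref{soporte}) and, crucially, the two-sided density bounds \eqref{cond:a*}, which your proposal never mentions. The observation that \eqref{N} forces $p_1\geq\cdots\geq p_m$ with $p_1>0$ (so that Lemma~\ref{soportecompleto} is applicable) is the genuine content of the reduction; once that is in place, \eqref{predet} and \eqref{predet*} are direct consequences of Theorem~1.3, and the passage from \eqref{predet} to \eqref{predet*} by evaluating at $\infty$ is as you describe. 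So the skeleton of your write-up is right, but the central citation is to the wrong theorem, and the hypothesis-checking you perform (i)--(iii) for $w_{2n}\equiv 1$) has no bearing on the theorem that actually does the work.
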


\begin{proof} Let $\Lambda$ be a ray sequence with the indicated restrictions. In particular, \eqref{N} implies that
$p_1\geq \cdots \geq p_m$ with $p_1 > 0$. Then, the assumptions of Lemma \ref{soportecompleto} are satisfied and, consequently, \eqref{soporte} and \eqref{cond:a*} take place. This means that for each $j=1,\ldots,m$ the hypotheses  of \cite[Theorem 1.3]{LGLago} hold true and that result guarantees that the sequences of polynomials  $(\tilde{q}_{\n,j})_{\n \in \Lambda}$ defined by \eqref{poltilde} fulfill \eqref{predet} and \eqref{predet*}. \end{proof}

In \cite{sasha99}, Theorem 2 is the analogue of Lemma \ref{prop2}. The proof of  \cite[Theorem 2]{sasha99} is based on an intricate theoretical construction using meromorphic functions on a Riemann surface which in our opinion has its weak points. Here Aptekarev also uses that the generating measures of the Nikishin system are absolutely continuous with respect to Lebesgue measure. Our proof gives an explicit solution in terms of orthogonal polynomials, the arguments are quite simple, and do not require absolute continuity of the generating measures.

\section{Proof of main results}

\subsection{Proof of Theorem \ref{puntofijo}}
For simplicity, in the proof we write $\vec{T}_{\vec{w}} = \vec{T}$. Let us assume that \eqref{contraction} takes place. It is known that if such an $\overline{m}$ exists then  $\vec{T}$ has a unique fixed point (see, for example,  \cite[Ex. (A.1), pg. 17]{GG}). The proof is simple and for completeness we include it.

 Since $( {C}^+ (\vec{\Delta}),d)$ is a complete metric space and $\gamma_m < 1$, by Banach's fixed point theorem    $\vec{T}^{\overline{m}}$ has a unique fixed point, say $\vec{f}^*$. In particular, $\vec{T}^{\overline{m}}\vec{f}^* = \vec{f}^*$. Applying $\vec{T}$ one more time, we get
 \[ \vec{T}^{\overline{m}+1}\vec{f}^* = \vec{T}\vec{f}^* = \vec{T}^{\overline{m} } \vec{T}\vec{f}^*.
 \]
 Consequently, $\vec{T}\vec{f}^*$ is also a fixed point of $\vec{T}^{\overline{m}}$. But the fixed point of $\vec{T}^{\overline{m}}$ is unique, so $\vec{T}\vec{f}^* = \vec{f}^*$; therefore, $\vec{f}^*$ is a fixed point of $\vec{T}$. On the other hand, if $\vec{f}$ is any fixed point of $\vec{T}$ we have
 \[ \vec{T}^{\overline{m}}\vec{f} = \vec{T}^{\overline{m}-1}\vec{f} = \cdots = \vec{T}\vec{f} = \vec{f}.
 \]
 So, $ \vec{f}$ is also a fixed point of $\vec{T}^{\overline{m}}$. Consequently, $ \vec{T}$ cannot have two distinct fixed points since they would be distinct fixed points of $\vec{T}^{\overline{m}}$.

 Let us prove \eqref{contraction}. For general $m$ the notation becomes a bit cumbersome so to fix ideas we begin with the simple case $m=2$. Thus $\overline{m}=1$ and we must show that $\vec{T}$ is contractive with $\gamma_2 = 1/2$. Take arbitrary functions $\vec{f} = (f_1,f_2), \vec{g} = (g_1,g_2) \in  {C}^+({\vec{\Delta}})$, and $\vec{\Delta} = (\Delta_1,\Delta_2)$. We have
 \[
   d(\vec{T}\vec{f},\vec{T}\vec{g}) :=  \max\left\{  \|\ln (|T_1\vec{f}|/|T_1\vec{g}|)\|_{\Delta_2},   \|\ln (|T_2\vec{f}|/|T_2\vec{g}|)\|_{\Delta_1} \right\}.
  \]
 Now, $\ln (|T_1\vec{f}|/|T_1\vec{g}|)$ and $ \ln (|T_2\vec{f}|/|T_2\vec{g}|)$ are harmonic functions on $\overline{\CC} \setminus \Delta_1$ and $\overline{\CC} \setminus \Delta_2$, respectively. Using the maximum principle for harmonic functions it follows that
 \begin{multline*}
 	\max\left\{  \|\ln (|T_1\vec{f}|/|T_1\vec{g}|)\|_{\Delta_2},   \|\ln (|T_2\vec{f}|/|T_2\vec{g}|)\|_{\Delta_1} \right\} \\ \leq \max\left\{\sup_{\Delta_1} |\ln (|T_1\vec{f}|/|T_1\vec{g}|)| , \sup_{\Delta_2} |\ln (|T_2\vec{f}|/|T_2\vec{g}|)|\right\}.
 \end{multline*}
  Due to \eqref{boundary}
  \begin{multline*}
  	\max\left\{\sup_{\Delta_1} |\ln (|T_1\vec{f}|/|T_1\vec{g}|)| , \sup_{\Delta_2} |\ln (|T_2\vec{f}|/|T_2\vec{g}|)|\right\} \\
  	= \frac{1}{2} \max\left\{\|\ln (f_2/g_2)\|_{\Delta_1} ,\|\ln (f_1/ g_1)\|_{\Delta_2}\right\} = \frac{1}{2} d(\vec{f},\vec{g}).
  \end{multline*}

  Notice that in the quotient the weights cancel out and what remains is a continuous function on the corresponding intervals and  the supremum becomes a maximum. The last equality is simply by definition. Adding up these relations \eqref{contraction} follows with the value prescribed on $\gamma_2$. In the sequel, we assume that $m \geq 3$ and, therefore, $\overline{m} > 1$.

  Now, let us see why $\vec{T}$ is not contractive when $m=3$. Proceeding as above with $\vec{f} = (f_1,f_2,f_3),\; \vec{g} = (g_1,g_2,g_3) \in  {C}^+({\vec{\Delta}})$, and $\vec{\Delta} = (\Delta_1,\Delta_2,\Delta_3)$, we get
  \begin{align*}
  	d(\vec{T}\vec{f},\vec{T}\vec{g}) =& \max\left\{  \|\ln (|T_1\vec{f}|/|T_1\vec{g}|)\|_{\Delta_2},   \|\ln (|T_2\vec{f}|/|T_2\vec{g}|)\|_{\Delta_1\cup \Delta_3},  \|\ln (|T_3\vec{f}|/|T_3\vec{g}|)\|_{\Delta_2} \right\}\\
  	                              \leq& \max\left\{  \|\ln (|T_1\vec{f}|/|T_1\vec{g}|)\|_{\Delta_1},   \|\ln (|T_2\vec{f}|/|T_2\vec{g}|)\|_{\Delta_2},  \|\ln (|T_3\vec{f}|/|T_3\vec{g}|)\|_{\Delta_3} \right\}\\
  	                                 =& \frac{1}{2}\max\left\{  \|\ln (f_2/g_2)\|_{\Delta_1},   \|\ln (f_1 /g_1 ) + \ln ( f_3/ g_3)\|_{\Delta_2},  \|\ln (f_2/g_2)\|_{\Delta_3} \right\} \\
  	                              \leq& \frac{1}{2}\max\left\{ d(\vec{f},\vec{g}),   \|\ln (f_1 /g_1 ) + \ln ( f_3/ g_3)\|_{\Delta_2},  d(\vec{f},\vec{g}) \right\}.
  \end{align*}

  The problem comes with the norm appearing in the last line because the natural way to bound it is to use the triangle inequality obtaining
  \[
    \|\ln (f_1 /g_1 ) + \ln ( f_3/ g_3)\|_{\Delta_2} \leq \|\ln (f_1 /g_1 )\|_{\Delta_2} + \|\ln ( f_3/ g_3)\|_{\Delta_2} \leq 2 d(\vec{f},\vec{g}).
  \]
  Putting everything together, we get that $T$ is non expansive; that is,
  \begin{equation} \label{non-expansive} d(\vec{T}\vec{f},\vec{T}\vec{g}) \leq d( \vec{f}, \vec{g}).
  \end{equation}
  (The previous arguments allow to deduce that $\vec{T}$ is non expansive for any $m\geq 3$.)

  Let us see that equality is actually attained.  Indeed, taking $\vec{f} \equiv (e,e,e)$, $\vec{g} \equiv (1,1,1)$, where $e$ denotes Euler's constant, the vector of harmonic functions
  \[
    \left(\ln (|T_1\vec{f}|/|T_1\vec{g}|), \ln (|T_2\vec{f}|/|T_2\vec{g}|) , \ln (|T_3\vec{f}|/|T_3\vec{g}|)\right)
  \]
  in $\prod_{k=1}^3(\overline{\CC} \setminus \Delta_k)$
 reduces to the constant vector function $(1/2,1,1/2)$  in which case
 \[ d(\vec{T}\vec{f},\vec{T}\vec{g}) = 1 = d( \vec{f}, \vec{g}).
 \]
 Similarly, one can prove that $\vec{T}$ is not contractive for any $m \geq 3$. Therefore, there is an error  in the statement of \cite[Proposition 1.1]{sasha99}. However, the conclusion that $\vec{T}$ has a unique fixed point is correct since \eqref{contraction} is true as we show next.

 We return to the general case. Take two functions $\vec{f} = (f_1,\ldots,f_m), \vec{g} = (g_1,\ldots,g_m) \in {C}^+( \vec{\Delta}) $. For simplicity in writing, in the sequel we use the notation
 \[ \ln \vec{f} := (\ln f_1,\ldots, \ln f_m), \qquad  { \vec{f}}/{ \vec{g}} := (f_1/g_1,\ldots,f_m/g_m),
 \]
 and
 \[ |{ \vec{f}}/{ \vec{g}}| := (|f_1/g_1|,\ldots,|f_m/g_m|), \qquad \|\ln({ \vec{f}}/{ \vec{g}} )\|_{\vec{\Delta}} := d(\vec{f},\vec{g}).
 \]

According to the definition of the metric $d$, we have
 \begin{equation}
 	\label{formulad}
 	d( \vec{T}^{\overline{m}}\vec{f}, \vec{T}^{\overline{m}} \vec{g} ) =  \| \ln (|\vec{T}^{\overline{m}} \vec{f}|/|\vec{T}^{\overline{m}} \vec{g}| )\|_{\vec{\Delta}}.
 \end{equation}
 Indeed, the $k$-th component $T_k \vec{T}^{\overline{m}-1}\vec{f} $ of $\vec{T}^{\overline{m}}\vec{f}$ is a Szeg\H{o} function in $\overline{\CC} \setminus \Delta_k$ which is real and positive on $\RR \setminus \Delta_k$. Consequently,
 \[
   \ln T_k \vec{T}^{\overline{m}-1}\vec{f}= \ln |T_k \vec{T}^{\overline{m}-1}\vec{f}|\qquad  \mbox{on}\qquad {\RR \setminus \Delta_k}.
 \]
 However, $\ln |T_k \vec{T}^{\overline{m}-1}\vec{f}|$ extends to a harmonic function on $\overline{\CC} \setminus \Delta_k$ and
 \[
   \| \ln  (|T_k \vec{T}^{\overline{m}-1}\vec{f}|/|T_k \vec{T}^{\overline{m}-1}\vec{g}|) \|_{(\Delta_{k-1} \cup \Delta_{k+1})} = \| \ln  |T_k \vec{T}^{\overline{m}-1}\vec{f}| -  \ln  |T_k \vec{T}^{\overline{m}-1}\vec{g}| \|_{(\Delta_{k-1} \cup \Delta_{k+1})}.
 \]
  Consequently, the norm $\|\cdot\|_{\vec{\Delta}}$ of $\ln (\vec{T}^{\overline{m}}\vec{f}/ \vec{T}^{\overline{m}}  \vec{g})$ coincides with the norm of $\ln (|\vec{T}^{\overline{m}} \vec{f}|/|\vec{T}^{\overline{m}} \vec{g}| )$ whose components are the harmonic functions $\ln  (|T_k \vec{T}^{\overline{m}-1}\vec{f}|/|T_k \vec{T}^{\overline{m}-1}\vec{g}|)$. We must find these harmonic functions.

 Fix $k =1,\ldots,m$. For each $u_k \in \mathcal{C}({\Delta_{k-1} \cup \Delta_{k+1}})$ (the space of continuous functions on ${\Delta_{k-1} \cup \Delta_{k+1}}$), we define $P_{j,k}(u_k)$, $j=k-1,k+1$, to be the harmonic function on $\overline{\CC} \setminus \Delta_{j}$ whose boundary values on $\Delta_j$ coincide with the values of $u_k$ on $\Delta_j$ (recall that $\Delta_0 = \Delta_{m+1} = \varnothing$). Let us introduce the matrix map
 \begin{equation} \label{P}
    P := \left(
    \begin{array}{cccccc}
    0 & P_{1,2} & 0 & \cdots & 0 & 0 \\
    P_{2,1}& 0 & P_{2,3} & \cdots & 0 & 0 \\
    0 & P_{3,2} & 0 & \ddots & 0 & 0 \\
    \vdots & \ddots & \ddots & \ddots& \ddots & \vdots \\
    0 & 0 & 0 & \ddots & 0& P_{m-1,m} \\
    0 & 0 & 0 & \cdots & P_{m,m-1}& 0
    \end{array}
    \right).
 \end{equation}
 (Only the entries in the sub-diagonal and supra-diagonal differ from $0$.) Set $u_k = \ln (f_k/g_k)$.

In the following, $(\cdot)^t$ denotes the transpose of the vector $(\cdot)$. Using \eqref{boundary}, we have
 \[ (\ln |\vec{T}^{\overline{m}} \vec{f}|/|\vec{T}^{\overline{m}}\vec{g}|)^t = \frac{1}{2} P (\ln |\vec{T}^{\overline{m}-1} \vec{f}|/|\vec{T}^{\overline{m}-1}\vec{g}|)^t = \cdots = \]
 \[\frac{1}{2^{\overline{m}-1}} P^{\overline{m}-1} (\ln |\vec{T} \vec{f}/\vec{T}\vec{g}|)^t = \frac{1}{2^{\overline{m}}} P^{\overline{m}}(u_1,\ldots,u_m)^t.
 \]
 This formula allows us to bound $\| \ln (|\vec{T}^{\overline{m}} \vec{f}|/|\vec{T}^{\overline{m}} \vec{g}| )\|_{\vec{\Delta}}$.

To illustrate, let us consider first the cases when $m=3,4$ and thus $\overline{m}=2$. We have
\[  \left(
\begin{array}{ccc}
0 & P_{1,2} & 0 \\
P_{2,1} & 0 & P_{2,3} \\
0 & P_{3,2} & 0
\end{array}
\right)^2
\left(
\begin{array}{c}
u_1 \\
u_2 \\
u_3
\end{array}
\right) =
\left(
\begin{array}{c}
P_{1,2}P_{2,1} u_1 + P_{1,2}P_{2,3}u_3 \\
P_{2,1}P_{1,2} u_2 + P_{2,3}P_{3,2} u_2   \\
 P_{3,2}P_{2,1} u_1 + P_{3,2}P_{2,3} u_3
\end{array}
\right)
\]
and
\[  \left(
\begin{array}{cccc}
0 & P_{1,2} & 0 &0 \\
P_{2,1} & 0 & P_{2,3} &0\\
0 & P_{3,2} & 0 & P_{3,4} \\
0 & 0 & P_{4,3} & 0
\end{array}
\right)^2
\left(
\begin{array}{c}
u_1 \\
u_2 \\
u_3 \\
u_4
\end{array}
\right) =
\left(
\begin{array}{c}
P_{1,2}P_{2,1} u_1 + P_{1,2}P_{2,3}u_3 \\
P_{2,1}P_{1,2} u_2 + P_{2,3}P_{3,2} u_2   + P_{2,3}P_{3,4}u_4\\
 P_{3,2}P_{2,1} u_1 + P_{3,2}P_{2,3} u_3  + P_{3,4}P_{4,3} u_3 \\
 P_{4,3}P_{3,2} u_2 + P_{4,3}P_{3,4} u_4
\end{array}
\right).
\]
Using \eqref{formulad}, the maximum principle for harmonic functions, and the triangle inequality, when $m=3$ we get
\[ 4d( \vec{T}^2\vec{f}, \vec{T}^2 \vec{g} ) =\]
\[ \max \{\|P_{1,2}P_{2,1} u_1 + P_{1,2}P_{2,3}u_3\|_{\Delta_2},
\|P_{2,1}P_{1,2} u_2 + P_{2,3}P_{3,2} u_2\|_{\Delta_1 \cup \Delta_3},
 \|P_{3,2}P_{2,1} u_1 + P_{3,2}P_{2,3} u_3\|_{\Delta_2}\}
\]
\[\leq  \max \{\|P_{1,2}P_{2,1} u_1 + P_{1,2}P_{2,3}u_3\|_{\Delta_1},
\|P_{2,1}P_{1,2} u_2 + P_{2,3}P_{3,2} u_2\|_{\Delta_2},
 \|P_{3,2}P_{2,1} u_1 + P_{3,2}P_{2,3} u_3\|_{\Delta_3}\}
\]
\[\leq  \max \{\|P_{1,2}P_{2,1} u_1\|_{\Delta_1} + \|P_{1,2}P_{2,3}u_3\|_{\Delta_1},
\|P_{2,1}P_{1,2} u_2\|_{\Delta_2} + \|P_{2,3}P_{3,2} u_2\|_{\Delta_2},
\]
\[
 \|P_{3,2}P_{2,1} u_1\|_{\Delta_3} + \|P_{3,2}P_{2,3} u_3\|_{\Delta_3}\}
\]

Using the maximum principle for harmonic functions it is easy to deduce that any one of the norms appearing in the last two lines can be bounded by $d(\vec{f},\vec{g})$. For example,
\[
  \|P_{1,2}P_{2,3}u_3\|_{\Delta_1} \leq \|P_{2,3}u_3\|_{\Delta_2} \leq \|\ln (f_3/g_3)\|_{\Delta_3} \leq d(\vec{f},\vec{g})
\]
where the last equality follows from the fact that $\ln (f_3/g_3)$ is one of the components of the vector $(u_1,u_2,u_3)$ whose norm coincides with $d(\vec{f},\vec{g})$. Consequently,
\[ d( \vec{T}^2\vec{f}, \vec{T}^2 \vec{g} ) \leq \frac{1}{2} d(\vec{f},\vec{g})
\]
and $1/2$  is the value assigned to $\gamma_3$.

When $m=4$, reasoning as above you get
\[ 4d( \vec{T}^2\vec{f}, \vec{T}^2 \vec{g} ) \leq 3 d(\vec{f},\vec{g})
\]
which is equivalent to \eqref{contraction} with $\gamma_4 = 3/4$. The $3$ in the previous inequality comes from the fact that the second and third entries of $P^2(u_1,\ldots,u_4)^t$ have three terms while the first and last only two.

Following the ideas expressed in the two examples, in order to estimate $\| \ln (|\vec{T}^{\overline{m}} \vec{f}|/|\vec{T}^{\overline{m}} \vec{g}| )\|_{\vec{\Delta}}$ all we have to do is to count the number of terms in each component of the vector $P^{\overline{m}}(u_1,\ldots,u_m)^t $ and divide the largest number by $2^{\overline{m}}$ to get $\gamma_m$.

The component $r$ of $P^{\overline{m}}(u_1,\ldots,u_m)^t $ is the sum of all the expressions  of the form
\[
  P_{\alpha_1,\beta_1}P_{\alpha_2,\beta_2}\cdots P_{\alpha_{\overline{m}},\beta_{\overline{m}}} u_{\beta_{\overline{m}}}
\]
satisfying the  rules:
\begin{itemize}
\item $\alpha_1 = r$,
\item $\beta_i = \alpha_i \pm 1, 1 \leq \beta_i \leq m, i=1,\ldots,{\overline{m}}$,
\item $\alpha_{i+1} = \beta_i, i=1,\ldots,{\overline{m}}-1$.
\end{itemize}
If this is so, notice that using the maximum principle for harmonic functions and \eqref{boundary} we have
\begin{multline*}
   \|P_{\alpha_1,\beta_1}P_{\alpha_2,\beta_2}\cdots P_{\alpha_{\overline{m}},\beta_{\overline{m}}} u_{\beta_{\overline{m}}}\|_{\Delta_{r-1} \cup \Delta_{r+1}} \leq \|P_{\alpha_1,\beta_1}P_{\alpha_2,\beta_2}\cdots P_{\alpha_{\overline{m}},\beta_{\overline{m}}} u_{\beta_{\overline{m}}}\|_{\Delta_{\alpha_2}}\\
    = \|P_{\alpha_2,\beta_2}\cdots P_{\alpha_{\overline{m}},\beta_{\overline{m}}} u_{\beta_{\overline{m}}}\|_{\Delta_{\alpha_2}} \leq \|P_{\alpha_2,\beta_2}\cdots P_{\alpha_{\overline{m}},\beta_{\overline{m}}} u_{\beta_{\overline{m}}}\|_{\Delta_{\beta_2}} \leq \cdots \leq	\\
	\|P_{\alpha_{\overline{m}},\beta_{\overline{m}}} u_{\beta_{\overline{m}}}\|_{\Delta_{\alpha_{\overline{m}}}} \leq  \|P_{\alpha_{\overline{m}},\beta_{\overline{m}}} u_{\beta_{\overline{m}}}\|_{\Delta_{\beta_{\overline{m}}}} = \|u_{\beta_{\overline{m}}}\|_{\Delta_{\beta_{\overline{m}}}} \leq d(\vec{f},\vec{g}).
\end{multline*}

The assertion of the previous sentence is a consequence of the definition of the product of two matrices and the structure of the matrix $P$. To prove it we introduce some terminology. We say that an expression of the form $P_{\alpha_1,\beta_1}P_{\alpha_2,\beta_2}\cdots P_{\alpha_k,\beta_k}$ is a chain with $k$ links. Only the entries of the form $P_{k,k\pm1}$, where $k+1 \leq m, k-1 \geq 1$, of $P$ are different from zero. The first subindex indicates the row and the second subindex the column where they are located. When we multiply $P$ by itself the entry $P_{k,k+1}$ with $k + 1 \leq m$ gets multiplied by all the entries of $P$ in  row $k +1$, but the only products that survive (are different from zero) are the chains with two links $P_{k,k+1}P_{k+1,k+2}$ (provided $k+2 \leq m$) and $P_{k,k+1}P_{k+1,k}$ which are located in $P^2$ in the positions $(k,k+2)$ and $(k,k)$, respectively. The entry $P_{k,k-1}$ with $k - 1 \geq 1$ gets multiplied by all the entries of $P$ in  row $k -1$, but the only products that survive   are the chains with two links $P_{k,k-1}P_{k-1,k}$ and $P_{k,k-1}P_{k-1,k-2}$ (provided $k-2 \geq 1$) which are located in $P^2$ in the positions $(k,k)$ and $(k,k-2)$, respectively.
We can go on in the same manner arriving to the rules stated above.

Summarizing, whenever $2 \leq \beta_i \leq  {m}-1$, the link $P_{\alpha_i,\beta_i}$ may be followed by $P_{\beta_i,\beta_i +1}$ or $P_{\beta_i,\beta_i -1}$; otherwise, there is only one option, either $P_{1,2}$ when $\beta_i = 1$ or $P_{m,m-1}$ when $\beta_i = m$. Consequently, the central components of  $P^{\overline{m}}(u_1,\ldots,u_m)^t $ are the ones with the largest number of terms since $1$ and $m$ are furthest away from the initial value $r$ determined by the component in question.

When $m$ is odd, the central component is the ${\overline{m}}$-th one. In that case, the number of terms in the ${\overline{m}}$-th component of  $P^{\overline{m}}(u_1,\ldots,u_m)^t $ equals $2^{\overline{m}} -2$ because  there are only two chains with ${\overline{m}}-1$ links  (namely $P_{{\overline{m}},{\overline{m}}-1}P_{{\overline{m}}-1,{\overline{m}}-2}\cdots P_{2,1}$ and $P_{{\overline{m}},{\overline{m}}+1}P_{{\overline{m}}+1,{\overline{m}}+2}\cdots P_{m-1,m}$), which can be 
extended with only one link. Therefore, $\gamma_m = (2^{\overline{m}} -2)/2^{\overline{m}}$.

If $m$ is even the central components are the ${\overline{m}}$ and ${\overline{m}}+1$ ones. In the first case,   only one chain with ${\overline{m}}-1$ links can be extended with only one link, namely $P_{{\overline{m}},{\overline{m}}-1}P_{{\overline{m}}-1,{\overline{m}}-2}\cdots P_{2,1}$. In the second case, the only chain  with ${\overline{m}}-1$ links which can be extended with only one link is $P_{{\overline{m}}+1,{\overline{m}}+2}P_{{\overline{m}}+2,{\overline{m}}+3}\cdots P_{m-1,m}$. Thus, the total number of terms in these two components is $2^{\overline{m}} -1$. This implies that $\gamma_m = (2^{\overline{m}} -1)/2^{\overline{m}}$. With this we conclude the proof.
 \hfill $\Box$

 Incidentally, for any fixed $m$ the maps $T^{{\overline{m}}'}, {\overline{m}}' \geq 2,$ can be defined. They are contractive if and only if ${\overline{m}}' \geq {\overline{m}}$. The constants of contraction are (not necessarily strictly) decreasing  with ${\overline{m}}'$ and tend to zero as ${\overline{m}}' \to \infty$.

 \subsection{Proof of Theorem \ref{th1}}

 First, we will deal with the asymptotics of the polynomials $Q_{\n,j}$, $j=1,\ldots,m$ and then we will obtain the asymptotics of the ML Hermite-Padé polynomials $a_{\n,j}$.

 In order to obtain the strong asymptotics of the polynomials $Q_{\n,j}$ we consider the particular case of the operator $\vec{T}_{\vec{w}}$ for the vector weight
 \begin{equation}
     \label{weight:dir}
     \vec{w}(x) = (\sqrt{(b_1-x)(x-a_1)}\tilde{h}_1(x)\sigma_1'(x),\ldots,\sqrt{(b_m-x)(x-a_m)}\tilde{h}_m(x)\sigma_m'(x))
 \end{equation}
 where $\tilde{h}_m\equiv 1$ and the functions $\tilde{h}_j$, $j=1,\ldots,m-1$ are given by \eqref{def:htil}. Since $\vec{w}$ is fixed throughout this subsection, to simplify the notation, from now on $\vec{T}_{\vec{w}} = \vec{T}$.

 Let $\vec{\mathsf{G}} = (\mathsf{G}_1,\ldots,\mathsf{G}_m)$ be the fixed point of the operator $\vec{T} = (T_1,\ldots,T_m)$. As  above, let $\mathcal{P}_{\n,k}$ be the set of all monic polynomials   of degree $\eta_{\n,k}$
 with real coefficients whose zeros lie in  $\mathbb{C} \setminus (\Delta_{k-1} \cup \Delta_{k+1}) $. As usual $\Delta_0 = \Delta_{m+1} \equiv \varnothing$. Set
 \[  H^{+,\n} :=
   \left\{ \left(\frac{\mathsf{G}_1(\infty)P_{\n,1}}{\Phi_1^{\eta_{\n,1}}},\cdots,\frac{\mathsf{G}_m(\infty)P_{\n,m}}{\Phi_m^{\eta_{\n,m}}}\right) : P_{\n,k}\in \mathcal{P}_{\n,k}, \, k=1,\ldots,m \right\}.
 \]
 Notice that $\frac{\mathsf{G}_k(\infty)P_{\n,k}(x)}{\Phi_k^{\eta_{\n,k}}(x)} > 0, x \in  \Delta_{k-1} \cup \Delta_{k+1}$.

 Let $\tilde{T}_{\n,k}$, $k=1,\ldots,m$, be  the operators defined on $\mathcal{P}_{\n} = \mathcal{P}_{\n,1}\times\cdots\times\mathcal{P}_{\n,m}$ which determine the components of $\tilde{T}_{\n}$; namely,
 \[  \tilde{T}_{\n} = (\tilde{T}_{\n,1},\ldots,\tilde{T}_{\n,m}).
 \]
 Define $T_{\n}:H^{+,\n} \to H^{+,\n} $ where
 \begin{multline*}
 	T_{\n}\left(\frac{\mathsf{G}_1(\infty)P_{\n,1}}{\Phi_1^{\eta_{\n,1}}},\cdots,\frac{\mathsf{G}_m(\infty)P_{\n,m}}{\Phi_m^{\eta_{\n,m}}}\right) =\\
 	\left(\mathsf{G}_1(\infty)\frac{\tilde{T}_{\n,1}(P_{\n,1},\ldots,P_{\n,m})}{\Phi_1^{\eta_{\n,1}}},\cdots,\mathsf{G}_m(\infty)\frac{\tilde{T}_{\n,m}(P_{\n,1},\ldots,P_{\n,m})}{\Phi_m^{\eta_{\n,m}}}\right).
 \end{multline*}
 Notice that any fixed point of $T_{\n}$ generates a fixed point of $\tilde{T}_{\n}$ and reciprocally any fixed point of $\tilde{T}_{\n}$ generates a fixed point of ${T}_{\n}$. The continuity of $T_{\n}$ follows from the continuity of $\tilde{T}_{\n}$.

 \begin{proof}
 	Let $\vec{g}_0^{(\n)}$ be the fixed point of $T_{\n}$. This means that
 \[  \vec{g}_0^{(\n)} =   \left(\frac{\mathsf{G}_1(\infty)Q_{\n,1}}{\Phi_1^{\eta_{\n,1}}},\cdots,\frac{\mathsf{G}_m(\infty)Q_{\n,m}}{\Phi_m^{\eta_{\n,m}}}\right)
 \]
 where $(Q_{\n,1},\ldots,Q_{\n,m})$ is the unique fixed point of the operator $\tilde{T}_{\n}$, see Lemma \ref{unicoQn}. We will prove that
 	\begin{equation}
 		\label{fixed}
 		\lim_{\n\in\Lambda}\|\vec{g}_0^{(\n)} - \vec{\mathsf{G}}\|_{ \vec{\Delta}} =0.
 	\end{equation}
 	Recall that the components of $\vec{\mathsf{G}}$ are Szeg\H{o} functions on $\Omega_k = \overline{\mathbb{C}} \setminus \Delta_k
 $, $k=1,\ldots,m$.

 In order to prove this, we will show that any neighborhood of $\vec{\mathsf{G}}$ in the $\|\cdot\|_{ \vec{\Delta}}$ norm contains a fixed point of the operator $T_{\n}$ for all $|\n| \geq n_0$, $\n \in \Lambda$. This is done using Brouwer's fixed point theorem.

 Fix $\delta > 0$. For each $k=1,\ldots,m$ set
 \[ \Delta_{k,\delta} := \{z \in \mathbb{C}: \min_{x \in \Delta_k} |z - x| \leq \delta\}.
 \]
 Take $\delta $ sufficiently small so that
 \[ \Delta_{k,\delta} \cap (\Delta_{k-1} \cup \Delta_{k+1}) =  \varnothing.
 \]
 Recall that $\Delta_0 = \varnothing= \Delta_{m+1}$. Set
 \[ \Omega_{k,\delta} := \overline{\mathbb{C}} \setminus \Delta_{k,\delta},
 \]
 which obviously contains $\Delta_{k-1} \cup \Delta_{k+1}$. Denote
 \[ \vec{\Omega}_{\delta} := (\Omega_{1,\delta},\ldots,\Omega_{m,\delta}).
 \]
 	
 	Let $H^+(\vec{\Omega}_{\delta})$ denote the cone of all $m$-tuples $(g_1,\ldots,g_m)$ of vector functions such that $g_k$, $k=1,\ldots,m,$ is holomorphic in $\Omega_{k,\delta}$, symmetric (with respect to the real line), and positive on $\Omega_{k,\delta}\cap\RR$. For $\vec{g} = (g_1,\ldots,g_m) \in H^+(\vec{\Omega}_{\delta})$, we define
 	\[ \|\vec{g}\|_{\vec{\Omega}_{\delta}} := \max\{\sup\{|g_k(z)|: z\in \Omega_{k,\delta}\}:k=1,\ldots,m\}\,\,(\leq \infty)
 	\]
 	and
 	\[  \min_{\vec{\Delta}} \vec{g} := \min \{\min  \{g_k(z): z\in \Delta_{k-1}\cup\Delta_{k+1}\}: k=1,\ldots,m\},
 	\]
 	where $\Delta_0=\Delta_{m+1}=\varnothing$.
 	
 	Fix a constant $C\geq 1$ so that $C \geq 2 \|\vec{\mathsf{G}}\|_{\vec{\Omega}_{\delta}} $ and $ C^{-1} \leq \frac{1}{2} \min_{\vec{\Delta}} \vec{\mathsf{G}}$. Define
 	\[  H^+(\vec{\Omega}_{\delta}, C) := \{\vec{g}\in H^+(\vec{\Omega}_{\delta}): \|\vec{g}\|_{\vec{\Omega}_{\delta}}\leq C,\; \min_{\vec{\Delta}} \vec{g}\geq C^{-1}\}.
 	\]
 	Obviously, $\vec{\mathsf{G}}\in H^+(\vec{\Omega}_{\delta}, C)$.
 	

 Let $(\vec{\rho}_n )_{n \in \mathbb{N}}$ be an arbitrary sequence of elements in $ H^+(\vec{\Omega}_{\delta}, C)$. For each $k = 1,\ldots,m$  the sequence of functions made up by  the $k$-th component of $\vec{\rho}_k$ is uniformly bounded on $\Omega_{k,\delta}$. Therefore, there exists  $\mathbb{I} \subset \mathbb{N}$ such that $(\vec{\rho}_n)_{n \in \mathbb{I}}$ converges componentwise to some vector function $\vec{\rho}$ uniformly on each compact subset of $\vec{\Omega}_{\delta} $, The $k$-th component of $\vec{\rho}$ is, therefore, holomorphic and symmetric on $\Omega_{k,\delta}$, and non-negative on $\Omega_{k,\delta}\cap \mathbb{R}, k=1,\ldots,m$. Additionally,
 	\[
 	\min_{  \vec{\Delta}} \vec{\rho} = \lim_{n \in \mathbb{I}} \min_{\vec{\Delta}} \vec{\rho}_n \geq C^{-1},	
 	\]
 	and
 	\[
 	 \|\vec{\rho}\|_{\vec{\Omega}_{\delta}} = \lim_{n\in \mathbb{I}} \|\vec{\rho}_n\|_{\vec{\Omega}_{\delta}} \leq C.
 	\]
 	Consequently, $\vec{\rho} \in H^+(\vec{\Omega}_{\delta},C)$. We conclude that $H^+(\vec{\Omega}_{\delta},C)$ is compact with the topology of uniform convergence on compact subsets on the space $H(\vec{\Omega}_{\delta})$ of vector analytic functions on $\vec{\Omega}_{\delta}$.

 	Fix an arbitrary $\theta > 0$. Let
 	\[
 	\omega(\theta)= \{ \vec{g} \in H^+(\vec{\Omega}_{\delta}, C):\|\vec{g} - \vec{\mathsf{G}}\|_{\vec{\Delta}}\leq \theta \}.
 	\]
  Analogously, for every $\varepsilon > 0$, set
 	\[
 	\omega_\varepsilon := \{ \vec{g}\in H^+(\vec{\Omega}_{\delta}, C) : d(\vec{g},\vec{\mathsf{G}}) \leq \varepsilon\}.
 	\]
 	There exists $\varepsilon_0$ such that
 	\[
 	\omega_{\varepsilon} \subset \omega(\theta), \qquad 0 < \varepsilon \leq \varepsilon_0,
 	\]
 	for, otherwise, we could find a sequence of vector functions in $H^+(\vec{\Omega}_{\delta}, C) \subset \mathcal{C}^+(\vec{\Delta})$ which converges to $\vec{\mathsf{G}}$ in the $d$ metric but not in the $\|\cdot\|_{ \vec{\Delta}}$ norm which would contradict \eqref{equiv}.
 	
 	Consider
 	\[
 	\omega_{\varepsilon,\n} := \omega_\varepsilon \cap H^{+,\n}.
 	\]
 	Let $\mu_k$ be the representing measure of $\mathsf{G}_k$; that is, $\mathsf{G}_k(z) = \mathsf{G}_k(\mu_k,z)$, $z\in\Omega_k$. Taking into account Lemma \ref{prop2}, in particular \eqref{predet*} and \eqref{equiv}, we get
 	\[
 	\lim_{\n\in\Lambda} d(\vec{\tilde{g}}_1^{(\n)},\vec{\mathsf{G}}) = 0,
 	\]
 	where
 	\[
 	\vec{\tilde{g}}_1^{(\n)} := \left(\frac{\mathsf{G}_1(\infty)\tilde{Q}_{\n,1}}{\Phi_1^{\eta_{\n,1}}}, \cdots, \frac{\mathsf{G}_m(\infty)\tilde{Q}_{\n,m}}{\Phi_m^{\eta_{\n,m}}}\right)
 	\]
 and the $\tilde{Q}_{\n,j}$, $j=1,\ldots,m,$ are the orthonormal polynomials $\tilde{q}_{\n,j}$
 given in \eqref{poltilde} re-normalized to be monic. Because of \eqref{predet*} we conclude that, for every $\varepsilon$, $0<\varepsilon<\varepsilon_0,$ there exists $n_0$ such that $\omega_{\varepsilon,\n}\neq\varnothing$ for $\n\in\Lambda$, $|\n|>n_0$.

Let  $H(\vec{\Omega})$ be the locally convex space  of all vector analytic functions on $\vec{\Omega} = (\Omega_1, \ldots,\Omega_m)$ with the topology of uniform convergence on compact subsets of $\vec{\Omega}$.
 Let us show that for each $\n \in \Lambda$ fixed, $\omega_{\varepsilon,\n}$ is a compact subset of $H(\vec{\Omega})$ with this topology.

 For $\n \in \Lambda$ fixed, consider an arbitrary sequence
 \[  (\rho_{n,1} ,\ldots,\rho_{n,m} )_{n \in \mathbb{N}} := \left(\frac{\mathsf{G}_1(\infty) {P}_{n,1}}{\Phi_1^{\eta_{\n,1}}}, \cdots, \frac{\mathsf{G}_m(\infty) {P}_{n,m}}{\Phi_m^{\eta_{\n,m}}}\right)_{n \in \mathbb{N}}
 \subset \omega_{\varepsilon,\n}.
 \]
 We must show that it has a subsequence which converges to an element of $\omega_{\varepsilon,\n}$.

Fix $k \in \{1,\ldots,m\}$.   Consider the sequence of monic polynomials  with real coefficients  $(P_{n,k})_{n\in \mathbb{N}}$ of degree $\eta_{\n,k}$ whose zeros lie in $ \mathbb{C} \setminus (\Delta_{k-1} \cup \Delta_{k+1})$. Since $\omega_{ \epsilon,\n} \subset H^+(\vec{\Omega}_{\delta},C)$ it readily follows that the zeros of the polynomials $P_{n,k}$
remain uniformly bounded away from $\Delta_{k-1} \cup \Delta_{k+1} \cup \{\infty\}$ in $n\in \mathbb{N}$.
Therefore, there exists a subsequence of indices $I \subset \mathbb{N}$ and a monic polynomial $P_k$ with real coefficients of degree $\eta_{\n,k}$, whose zeros lie on $\mathbb{C} \setminus (\Delta_{k-1} \cup \Delta_{k+1})$, such that $\lim_{n \in I} P_{n,k} = P_k$ uniformly on every compact subset of $\mathbb{C}$.  Since this is true for every $k = 1,\ldots,m$, it readily follows that there exists a subsequence $I' \subset \mathbb{N}$ such that
\[ \lim_{n \in I'}  (\rho_{n,1},\ldots,\rho_{n,m}) = \left(\frac{\mathsf{G}_1(\infty) {P}_{1}}{\Phi_1^{\eta_{\n,1}}}, \cdots, \frac{\mathsf{G}_m(\infty) {P}_{m}}{\Phi_m^{\eta_{\n,m}}}\right) \in  \omega_{\varepsilon,\n},
\]
uniformly on every compact subset of $\vec{\Omega}$.   With this we conclude the proof that $\omega_{\varepsilon,\n}$ is compact as claimed.

Fix $\n \in \Lambda$. Let us show that $\omega_{\varepsilon,\n}$ is convex. Take two arbitrary vector functions in $\omega_{\varepsilon,\n}$
\[ \vec{p}_j =  \left(\frac{\mathsf{G}_1(\infty) {P}_{j,1}}{\Phi_1^{\eta_{\n,1}}}, \cdots, \frac{\mathsf{G}_m(\infty) {P}_{j,m}}{\Phi_m^{\eta_{\n,m}}}\right) , \qquad j=1,2.
\]
 Let us show that for every $\beta, 0 \leq \beta \leq 1$
\[ \beta \vec{p}_1 + (1 - \beta) \vec{p}_2  \in \omega_{\varepsilon,\n}.
\]
Obviously, $\beta \vec{p}_1 + (1 - \beta) \vec{p}_2 \in H^{+,\n} \cap H^+(\vec{\Omega}_{\delta}, C)$.   In particular, for each $k = 1,\ldots,m,$
\begin{equation} \label{positive} {\mathsf{G}_k(\infty) (\beta {P}_{1,k}(x)+(1-\beta)P_{2,k}(x))}{\Phi_k^{-\eta_{\n,k}}(x)} > 0, \qquad x \in \Delta_{k-1} \cup \Delta_{k+1}.
\end{equation}
It remains to prove that
\begin{equation} \label{desig*}
\left\| \ln \left( \frac{\mathsf{G}_k(\infty) (\beta {P}_{1,k}+(1-\beta)P_{2,k})}{\Phi_k^{\eta_{\n,k}}G_k} \right)\right\|_{\Delta_{k-1} \cup \Delta_{k+1}} \leq \varepsilon.
\end{equation}
For each $x \in \Delta_{k-1} \cup \Delta_{k+1}$ fixed, $\beta {P}_{1,k}(x)+(1-\beta)P_{2,k}(x)$ is an intermediate
point between ${P}_{1,k}(x)$ and ${P}_{2,k}(x)$. This fact, together with \eqref{positive}, implies that for $x \in \Delta_{k-1} \cup \Delta_{k+1}$
\[ 0 <  \min \left\{ \frac{\mathsf{G}_k(\infty) ({P}_{1,k}(x)) }{\Phi_k^{\eta_{\n,k}(x)}G_k(x)}, \frac{\mathsf{G}_k(\infty) ({P}_{2,k}(x)) }{\Phi_k^{\eta_{\n,k}(x)}G_k(x)} \right\} \leq
\]
\[ \frac{\mathsf{G}_k(\infty) (\beta {P}_{1,k}(x)+(1-\beta)P_{2,k}(x))}{\Phi_k^{\eta_{\n,k}(x)}G_k(x)}  \leq \max \left\{ \frac{\mathsf{G}_k(\infty) ({P}_{1,k}(x)) }{\Phi_k^{\eta_{\n,k}(x)}G_k(x)}, \frac{\mathsf{G}_k(\infty) ({P}_{2,k}(x)) }{\Phi_k^{\eta_{\n,k}(x)}G_k(x)} \right\}.
\]
Using the monotonicity of the logarithm \eqref{desig*} readily follows.

Now we need to show that $T_{\n}(\omega_{\varepsilon, \n})\subset \omega_{\varepsilon, \n}$ for all $\n \in \Lambda$ with $|\n|$ large enough. Indeed, there exist $0 < \varepsilon' < \varepsilon$ and $n_0$ such that for all $\n \in \Lambda, |\n| \geq n_0$, we have
 \begin{equation}
 	\label{include1}
 	T_{\n}(\omega_{\varepsilon', \n})\subset \omega_{\varepsilon, \n}.
 \end{equation}
 If we assume the contrary, we could find a sequence $(\vec{g}^{(\n)})_{\n \in \Lambda'}$, $\Lambda' \subset \Lambda$,
 \[ \vec{g}^{(\n)} =  \left(\frac{\mathsf{G}_1(\infty) \tilde{Q}_{\n,1}}{\Phi_1^{\eta_{\n,1}}}, \cdots, \frac{\mathsf{G}_m(\infty) \tilde{Q}_{\n,m}}{\Phi_m^{\eta_{\n,m}}}\right)
  \in \omega_{\varepsilon, \n},
 \] such that
 \[
 \lim_{\n \in \Lambda'} d(\vec{\mathsf{G}}, \vec{g}^{(\n)}) = 0, \qquad \mbox{and} \qquad d(\vec{\mathsf{G}},T_{\n}(\vec{g}^{(\n)})) = d(T(\vec{\mathsf{G}}),T_{\n}(\vec{g}^{(\n)}))\geq \varepsilon.
 \]
 Since $ (\vec{g}^{(\n)})_{\n \in \Lambda'} \subset H^+(\vec{\Omega}_{\delta}, C)$ it follows that for every $j=1,\ldots,m$ the zeros of $(\tilde{Q}_{\n,j})_{\n \in \Lambda'}$ remain uniformly bounded away from $\Delta_{j-1} \cup \Delta_{j+1}$. Indeed, if we assume the contrary, taking a convenient subsequence, we would conclude that $\mathsf{G}_j$ must have some zero in $\Delta_{j-1} \cup \Delta_{j+1}$. Therefore, we can apply Lemma \ref{prop5} and from
 \eqref{limfund*}   and \eqref{equiv} it follows that
 \[
 \lim_{\n \in \Lambda'} d(T(\vec{\mathsf{G}}),  {T}_{\n}( \vec{g}^{(\n)})) = 0.
 \]
 which is not possible because the limit value should have to be $\geq \varepsilon$. So, such $\varepsilon' > 0$ exists.

 It remains to prove that
 \begin{equation}
 	\label{include2}
 	T_{\n}(\omega_{\varepsilon, \n} \setminus   \omega_{\varepsilon', \n}) \subset \omega_{\varepsilon, \n}.
 \end{equation}
 for all $\n \in \Lambda$ with $|\n|$ sufficiently large. An inmediate consequence of the maximum principle is that
 the operator $\vec{T}$ is non-expansive, see \cite[Proposition 1.1]{sasha99} or \eqref{non-expansive} for the special  case $m=3$. Therefore,
 \[
 d( \vec{\mathsf{G}}, \vec{T}(\vec{g})) = d(\vec{T}(\vec{\mathsf{G}}), \vec{T}(\vec{g}))  \leq d(\vec{\mathsf{G}},  \vec{g}), \qquad \vec{ g} \in \omega_{\varepsilon}.
 \]
 Consequently,
 \[
 \sup_{\vec{g} \in \omega_{\varepsilon} \setminus   \omega_{\varepsilon'}}
 \frac{d(\vec{T}(\vec{\mathsf{G}}), \vec{T}(\vec{g}))}{d(\vec{\mathsf{G}},  \vec{g})} \leq 1.
 \]
 Consider
 \[ \tilde{\omega}_{\varepsilon,\varepsilon'} = \{\vec{g} \in  \omega_{\varepsilon}\setminus \omega_{\varepsilon'}: g_k(\infty) = \mathsf{G}_k(\infty), k=1,\ldots,m\}.
 \]
 Suppose that
 \[ \sup_{\vec{g} \in  \tilde{\omega}_{\varepsilon,\varepsilon'}}
 \frac{d(\vec{T}(\vec{\mathsf{G}}), \vec{T}(\vec{g}))}{d(\vec{\mathsf{G}},  \vec{g})} = 1.
 \]
  Then, there exist a sequence
  $(\vec{g}_n)_{n \in \mathbb{N}} \subset  \tilde{\omega}_{\varepsilon,\varepsilon'}$ such that
 \[
 \lim_{n \to \infty} \frac{d(\vec{T}(\vec{\mathsf{G}}), \vec{T}(\vec{g}_n))}{d(\vec{\mathsf{G}},  \vec{g}_n )} = 1,
 \]
 a subsequence $(\vec{g}_n )_{n \in \mathbb{I} \subset \mathbb{N}} $, and
  $\vec{g} \in H^{+}(\vec{\Omega}_\delta,C)$ such that
 \[
  \lim_{n \in \mathbb{I}} \|\vec{g}_n  - \vec{g}\|_{\vec{\Omega}_{\delta}} = 0,
  \qquad \lim_{n \in \mathbb{I}} d(\vec{T}(\vec{g}_n  ), \vec{T}( \vec{g})) = 0,
 \]
 and
 \[
 {d(\vec{T}(\vec{\mathsf{G}}), \vec{T}(\vec{g}))} = {d(\vec{\mathsf{G}},  \vec{g} )}.
 \]
 That is
 \begin{equation}
 	\label{igualnormas}
 	\|\ln(\vec{T}(\vec{g})/\vec{T}(\vec{\mathsf{G}}))\|_{\vec{\Delta}} =  \|\ln(\vec{g}/\vec{\mathsf{G}})\|_{\vec{\Delta}}.
 \end{equation}
 We will show that this implies that $\vec{g} = \vec{\mathsf{G}}$ which is not possible because
 $d(\vec{g},\vec{\mathsf{G}}) \geq \varepsilon' >0$.

 Let $\vec{u} := \ln(\vec{g}/\vec{\mathsf{G}}) = (\ln(g_1/\mathsf{G}_1),\ldots,\ln(g_m/\mathsf{G}_m))$ be the vector function whose $k$-th component  $u_k$ is the restriction to $\Delta_{k-1} \cup \Delta_{k+1}$ of $\ln(g_k/G_k)$.  By the definition of the operator $\vec{T}$, we have  $\ln(|\vec{T}(\vec{g})|/|\vec{T}(\vec{\mathsf{G}})|)$ is the vector function whose $j$-th component is the harmonic function on $\Omega_j$ whose  boundary values on $\Delta_j$ coincide with the restriction on $\Delta_j$ of $(u_{j-1} + u_{j+1})/2$. If \eqref{igualnormas} takes place, there exists $k$, $1\leq k\leq m$ such that (recall the definition of the operators $P_{j,k}$ just before the  introduction of the matrix \eqref{P})
 \begin{align*}
 	\|\vec{u}\|_{\vec{\Delta}} =& \frac{1}{2}\|P_{k,k-1}u_{k-1} + P_{k,k+1}u_{k+1}\|_{\Delta_{k-1}\cup\Delta_{k+1}}\\
 	\leq& \frac{1}{2}\left(\|P_{k,k-1}u_{k-1}\|_{\Delta_{k-1}\cup\Delta_{k+1}} + \|P_{k,k+1}u_{k+1}\|_{\Delta_{k-1}\cup\Delta_{k+1}} \right)\\
 	\leq& \frac{1}{2}\left(\|P_{k,k-1}u_{k-1}\|_{\Delta_k} + \|P_{k,k+1}u_{k+1}\|_{\Delta_{k}}\right) \leq \|\vec{u}\|_{\vec{\Delta}}.
 \end{align*}
If $k=1$ or $k=m$,   you actually get $\|\vec{u}\|_{\vec{\Delta}} \leq \frac{1}{2} \|\vec{u}\|_{\vec{\Delta}}$ because in all the intermediate steps there is only one term, so $\|u\|_{\vec{\Delta}} = 0$ and we would be done.

Now, for any other value of $k$ the chain of inequalities implies that there is equality on each step; in particular
 \[
 \|P_{k,k-1}u_{k-1}\|_{\Delta_{k-1}\cup\Delta_{k+1}} =  \|P_{k,k-1}u_{k-1}\|_{\Delta_{k} } = \|\vec{u}\|_{\vec{\Delta}}, \qquad
 \]
 and
 \[
 \|P_{k,k+1}u_{k+1}\|_{\Delta_{k-1}\cup\Delta_{k+1}} =  \|P_{k,k+1}u_{k+1}\|_{\Delta_{k}} = \|\vec{u}\|_{\vec{\Delta}}.
 \]
 We conclude that $P_{k,k-1}u_{k-1}$ and $P_{k,k+1}u_{k+1}$ are constant because they are harmonic functions in $\Omega_k$ which attain their maximum at interior points. The constant value they take is $\|\vec{u}\|_{\vec{\Delta}}$ (or  $-\|\vec{u}\|_{\vec{\Delta}}$). In turn this implies that $u_{k+1}$ and $u_{k-1}$ restricted to $\Delta_k$ are constantly equal to $\|\vec{u}\|_{\vec{\Delta}}$ (or  $-\|\vec{u}\|_{\vec{\Delta}}$).  Consequently,
 \[
 \ln(g_{k+1}(x)/\mathsf{G}_{k+1}(x)) = \ln(g_{k-1}(x)/\mathsf{G}_{k-1}(x)) = \|\vec{u}\|_{\vec{\Delta}}, \qquad x \in \Delta_k.
 \]
 However, $\ln(g_{k+1} /\mathsf{G}_{k+1} ) \in H^+(\Omega_{\delta,k+1}), \ln(g_{k-1} /\mathsf{G}_{k-1} ) \in H^+(\Omega_{\delta,k-1}) $. Since they are constant on the interval $\Delta_k$ of their regions of holomorphy, they are constantly equal to $\|u\|_{\vec{\Delta}}$.
 Now,
 \[
 g_{k+1}(\infty) =   \mathsf{G}_{k+1}(\infty) \qquad \mbox{and} \qquad
   g_{k-1}(\infty)   =   \mathsf{G}_{k-1}(\infty).
 \]
 It follows that
 \[
 \|\vec{u}\|_{\vec{\Delta}} = \ln(g_{k+1}(\infty)/\mathsf{G}_{k+1}(\infty))  =  \ln(g_{k-1}(\infty)/\mathsf{G}_{k-1}(\infty)) = 0
 \]
 Therefore, $\vec{g} \equiv \vec{\mathsf{G}}$, which is impossible as pointed out above. This means that
 \[
 \sup_{\vec{g} \in \tilde{\omega}_{\varepsilon,\varepsilon'}} \frac{d(\vec{T}(\vec{\mathsf{G}}), \vec{T}(\vec{g}))}{d(\mathsf{G},  \vec{g})} = \gamma < 1.
 \]
 In other words,
 \begin{equation}
 	\label{gamma}
 	d(\vec{T}(\vec{\mathsf{G}}), \vec{T}(\vec{g})) \leq \gamma d(\vec{ \mathsf{G}},  \vec{g}), \qquad \vec{g} \in \tilde{\omega}_{\varepsilon,\varepsilon'}.
 \end{equation}

Now, we are going to show that there exists $n_0$ such that for all $\n \in \Lambda$ with $|\n|>n_0$ and $\vec{g}\in\omega_{\varepsilon, \n}$, we have
 	\begin{equation}
 		\label{desig}
 		d(T_{\n}(\vec{g}), \vec{T} (\vec{g}))< (1-\gamma)\varepsilon.
 	\end{equation}
 	
 Indeed, if we assume the contrary, we can find  $(\vec{g}^{(\n)})_{\n\in\Lambda'}$, $\Lambda' \subset \Lambda$, $\vec{g}^{(\n)} \in \omega_{\varepsilon,\n}$ such that
 	\begin{equation}
 		\label{contra}
 		d(T_{\n}(\vec{g}^{(\n)} ), \vec{T} (\vec{g}^{(\n)}) )\geq  (1-\gamma)\varepsilon.
 	\end{equation}
 	Since $\omega_{\varepsilon, \n}\subset H^+(\vec{\Omega}_{\delta}, C)$   the sequence $(\vec{g}^{(\n)})_{\n\in\Lambda'}$ is uniformly bounded in the norm $\|\cdot\|_{\vec{\Omega}_\delta}$. Therefore, there exists a function $\vec{g}\in H^+(\vec{\Omega}_{\delta}, C)$ and a subsequence $\mathbf{\Lambda}''\subset \Lambda'$ such that
 	\[
 	  \lim_{\n\in\Lambda''} \|\vec{g}^{(\n)}-\vec{g}\|_{\vec{\Omega}_\delta} = 0.
 	\]
 	In particular,
 	\[
 	  \lim_{\n\in\Lambda''} \|\vec{g}^{(\n)}-\vec{g}\|_{ \vec{\Delta}} = 0,
 	\]
 and \eqref{limden} in Lemma \ref{prop5} takes place with $\vec{f}_{\vec{n}} = \vec{g}^{(\n)}$, $\vec{f} = \vec{g}$, and $\Lambda = \Lambda''$. For each $j=1,\ldots,m$, the zeros of the $j$-th component of the vector
 functions $\vec{g}^{(\n)}$ remain uniformly bounded away from $\Delta_{j-1} \cup \Delta_{j+1}$ for $\n \in \Lambda''$ . Indeed, the contrary  implies  that the $j$-th component of $\vec{g}$ has a zero on $\Delta_{j-1} \cup \Delta_{j+1}$ which is not the case. 	According to \eqref{limfund*}
 	\[
 	  \lim_{\n\in\mathbf{\Lambda}''} \|T_{\n}(\vec{g}^{(\vec{n})}) -  \vec{T} (\vec{g})\|_{ \vec{\Delta}} =0
 	\]
 	which contradicts \eqref{contra} due to \eqref{equiv} and the continuity of the operator $\vec{T}$.

  For $\vec{g} \in \omega_{\varepsilon,\n} \setminus \omega_{\varepsilon',\n} \subset \tilde{\omega}_{\varepsilon,\varepsilon'}$, on account of \eqref{gamma} and \eqref{desig}, we obtain
 \[
 d(\vec{\mathsf{G}},T_{\n}(\vec{g})) \leq d(\vec{T}(\vec{\mathsf{G}}), \vec{T}(\vec{g})) + d(\vec{T}(\vec{g}),T_{\n}(\vec{g})) \leq \gamma d(\vec{ \mathsf{G}},  \vec{g}) + (1 - \gamma)\varepsilon < \varepsilon.
 \]
 This implies \eqref{include2} which together with \eqref{include1} give that there exists $n_0$ such that
 \[
   T_{\n}(\omega_{\varepsilon, \n} ) \subset \omega_{\varepsilon, \n}, \qquad \n \in \Lambda, \qquad |\n| \geq n_0.
 \]

 We have shown that for each fixed $\n \in \Lambda$, $|\n| > n_0$, $\omega_{\varepsilon,\n}$ is a non-empty convex, compact subset of
 the locally convex space $H(\vec{\Omega})$ of vector analytic functions on $\vec{\Omega} = (\Omega_1, \ldots,\Omega_m)$ with the topology of uniform convergence on compact subsets of $\vec{\Omega}$, and $T_{\n}$ is a continuous operator which maps $\omega_{\varepsilon,\n} \subset H(\vec{\Omega})$ into itself; therefore, Brouwer's fixed point theorem asserts that  $T_{\n}$ has at least one fixed point in $\omega_{\varepsilon,\n}$ (see \cite[Theorem V.19]{ReedSimon}). This combined with Lemma \ref{unicoQn} gives us that the   fixed point is unique and  equals
 \[
   \left(\frac{\mathsf{G}_1(\infty)Q_{\n,1}}{\Phi_1^{\eta_{\n,1}}},\cdots,
   \frac{\mathsf{G}_m(\infty)Q_{\n,m}}{\Phi_m^{\eta_{\n,m}}}\right) \in \omega_{\varepsilon,\n},
 \]
 where  $(Q_{\n,1},\ldots,Q_{\n,m})$ is the vector  whose components  are the polynomials whose roots coincide with the zeros of the forms $\mathcal{A}_{\n,j}$, $j=1,\ldots,m$. Since $\varepsilon$ can be taken arbitrarily small \eqref{fixed} follows and the statements \eqref{limfund***} and \eqref{kappas} of Theorem \ref{th1} hold using again Lemma \ref{prop5}.

 From \cite[Prop. 1.2]{lysov20} and \cite[Th 1.1]{SLL22}, we know that for $j=0,\ldots,m-1$
\[
  \lim_{\n\in\Lambda} \frac{a_{\n,j}(z)}{a_{\n,m}(z)} = \widehat{s}_{m,j+1}(z)
\]
uniformly on compact subsets of $\Omega_m$. Since $a_{\n,m}$ coincides with $Q_{\n,m}$ we have
\[
  \lim_{\n\in\Lambda} \frac{a_{\n,j}(z)}{\Phi_m^{|\n|}(z)}\frac{\Phi_m^{|\n|}(z)}{Q_{\n,m}(z)} =\widehat{s}_{m,j+1}(z).
\]
Now, using  \eqref{limfund***},  \eqref{MLasin} follows and we conclude the proof of Theorem 1.4. \end{proof}

With the asymptotic behavior of the vector sequence $(Q_{\n,1},\ldots,Q_{\n,m}), \n \in \Lambda,$ it is easy to derive the asymptotics of the linear forms $\mathcal{A}_{\n,j}$, $j=0,\ldots,m-1$.

\begin{corollary}
	\label{asym_forms}
	Suppose that the assumptions of Theorem \ref{th1} hold and the $\mathcal{A}_{\n,j}$, $j=0,\ldots,m-1$, are defined by \eqref{df:HP:ly:1}. Let $\varepsilon_{\n,j+1}$, $j=0,\ldots,m-1$ denote the constant sign which the varying measure $\frac{\mathcal{H}_{\n,j+1}(x) \D \sigma_{j+1}(x)}{Q_{\n,j}(x)Q_{\n,j+2}(x)}$ adopts on $\Delta_{j+1}$. Then,
	\begin{equation}
		\label{asym_An0}
		\lim_{\n\in\Lambda}  {\varepsilon_{\n,1}K_{\n,0}^2\Phi_{1}^{\eta_{\n,1}}(z)\mathcal{A}_{\n,0}(z)}  = \frac{\mathsf{G}_{1}(\infty)}{\mathsf{G}_{1}(z)} \frac{1}{\sqrt{(z-a_{1})(z-b_{1})}},
	\end{equation}
  uniformly on compact subsets of $\overline{\CC}\setminus\Delta_1$, and for $j=1,\ldots,m-1$
	\begin{equation}
		\label{asym_Anj}
	\lim_{\n\in\Lambda} \frac{\varepsilon_{\n,j+1}K_{\n,j}^2\Phi_{j+1}^{\eta_{\n,j+1}}(z)\mathcal{A}_{\n,j}(z)}{\Phi_j^{\eta_{\n,j}}(z)} = \frac{\mathsf{G}_{j}(z)}{\mathsf{G}_{j}(\infty)} \frac{\mathsf{G}_{j+1}(\infty)}{\mathsf{G}_{j+1}(z)} \frac{1}{\sqrt{(z-a_{j+1})(z-b_{j+1})}},
	\end{equation}
	uniformly on compact subsets of $\overline{\CC}\setminus(\Delta_j \cup \Delta_{j+1})$.  	
\end{corollary}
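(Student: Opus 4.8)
The plan is to reduce \eqref{asym_An0} and \eqref{asym_Anj} to the asymptotics of the polynomials $Q_{\n,j}$ already obtained in Theorem \ref{th1} together with the behaviour of the functions $h_{\n,j}$ extracted from Lemma \ref{lem:hnj}. First I would rewrite the quantity to be estimated in terms of these objects. Recalling that $\mathcal{H}_{\n,j}(z) = Q_{\n,j+1}(z)\mathcal{A}_{\n,j}(z)/Q_{\n,j}(z)$ and $h_{\n,j} = (K_{\n,j})^2\mathcal{H}_{\n,j}$, one gets
\[
   \varepsilon_{\n,j+1}K_{\n,j}^2\mathcal{A}_{\n,j}(z) = \varepsilon_{\n,j+1}\,\frac{Q_{\n,j}(z)}{Q_{\n,j+1}(z)}\,h_{\n,j}(z),
\]
so that for $j=1,\ldots,m-1$
\[
   \frac{\varepsilon_{\n,j+1}K_{\n,j}^2\Phi_{j+1}^{\eta_{\n,j+1}}(z)\mathcal{A}_{\n,j}(z)}{\Phi_j^{\eta_{\n,j}}(z)}
   = \frac{Q_{\n,j}(z)/\Phi_j^{\eta_{\n,j}}(z)}{Q_{\n,j+1}(z)/\Phi_{j+1}^{\eta_{\n,j+1}}(z)}\;\varepsilon_{\n,j+1}h_{\n,j}(z),
\]
while for $j=0$, using $Q_{\n,0}\equiv 1$ and $\eta_{\n,0}=0$, the left-hand side of \eqref{asym_An0} equals $\varepsilon_{\n,1}h_{\n,0}(z)\big/\big(Q_{\n,1}(z)/\Phi_1^{\eta_{\n,1}}(z)\big)$; thus \eqref{asym_An0} is simply the $j=0$ instance of \eqref{asym_Anj} with the conventions $Q_{\n,0}\equiv 1$, $\mathsf{G}_0\equiv1$, $\Delta_0=\varnothing$.

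Next I would control the two factors. By Theorem \ref{th1}, \eqref{limfund***} gives $Q_{\n,j}/\Phi_j^{\eta_{\n,j}}\to \mathsf{G}_j/\mathsf{G}_j(\infty)$ uniformly on compact subsets of $\overline{\CC}\setminus\Delta_j$; since the limit is nonvanishing there, the zeros of $Q_{\n,j}$ stay uniformly bounded away from $\Delta_{j-1}\cup\Delta_{j+1}$ along $\Lambda$. Consequently the hypotheses of Lemma \ref{lem:hnj} hold with the choice $\vec{\tilde Q}_{\n}=\vec{Q}_{\n}$: indeed $\vec{Q}_{\n}$ is the fixed point of $\tilde T_{\n}$ by Lemma \ref{unicoQn}, so $\tilde T_{\n}(\vec{Q}_{\n})=\vec{Q}_{\n}$, the associated weights $\mathcal{H}_j(\vec{Q}_{\n};\cdot)$ coincide with $\mathcal{H}_{\n,j}$ and the normalizing constants with $K_{\n,j}$; the straight ray condition $p_1\geq\cdots\geq p_m$ yields \eqref{N} with $N=0$; and $\sigma_j\in\mathcal{S}(\Delta_j)$ forces $\sigma_j'>0$ a.e. Now $\varepsilon_{\n,j+1}h_{\n,j}(z)$ is precisely the Cauchy transform $\int \frac{q_{\n,j+1}^2(x)}{z-x}\frac{|h_{\n,j+1}(x)\D\sigma_{j+1}(x)|}{|Q_{\n,j}(x)Q_{\n,j+2}(x)|}$ occurring in \eqref{formrec3}, and the computation carried out in the proof of Lemma \ref{lem:hnj} (which applies the theory of orthogonal polynomials with respect to varying measures to this integral) shows, before passing to moduli, that
\[
   \lim_{\n\in\Lambda}\varepsilon_{\n,j+1}h_{\n,j}(z) = \frac{1}{\sqrt{(z-a_{j+1})(z-b_{j+1})}}
\]
uniformly on compact subsets of $\overline{\CC}\setminus\Delta_{j+1}$, for $j=0,\ldots,m-1$ (this is the signed refinement of \eqref{def:htil}).

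Finally I would multiply the limits. On a compact subset of $\overline{\CC}\setminus(\Delta_j\cup\Delta_{j+1})$ all the factors converge uniformly, the limiting functions being analytic and nonvanishing there, so the product converges uniformly to
\[
   \frac{\mathsf{G}_j(z)}{\mathsf{G}_j(\infty)}\,\frac{\mathsf{G}_{j+1}(\infty)}{\mathsf{G}_{j+1}(z)}\,\frac{1}{\sqrt{(z-a_{j+1})(z-b_{j+1})}},
\]
which is \eqref{asym_Anj}; specializing to $j=0$ and cancelling gives \eqref{asym_An0}. I do not anticipate a serious obstacle: the only delicate points are the verification that one may take $\vec{\tilde Q}_{\n}=\vec{Q}_{\n}$ in Lemma \ref{lem:hnj}, which rests on the already-proven Theorem \ref{th1} to keep the zeros of $Q_{\n,j}$ away from the relevant intervals, and the routine bookkeeping of the sign constants $\varepsilon_{\n,j+1}$ and the normalizations $K_{\n,j}$; the passage from the modulus statement \eqref{def:htil} to the signed limit of $h_{\n,j}$ is immediate because the proof of Lemma \ref{lem:hnj} already produces the Cauchy transform itself, uniformly on $\overline{\CC}\setminus\Delta_{j+1}$.
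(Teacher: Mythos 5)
Your proof is correct and follows essentially the same route as the paper: rewrite $\varepsilon_{\n,j+1}K_{\n,j}^2\mathcal{A}_{\n,j}$ as $(Q_{\n,j}/\Phi_j^{\eta_{\n,j}})(\Phi_{j+1}^{\eta_{\n,j+1}}/Q_{\n,j+1})\,\varepsilon_{\n,j+1}h_{\n,j}$ via \eqref{formrec3}, then invoke Theorem \ref{th1} for the polynomial ratios and Lemma \ref{lem:hnj} for the weight factor. You were in fact slightly more careful than the published proof in noting that one needs the \emph{signed} convergence of $\varepsilon_{\n,j+1}h_{\n,j}$ on compact subsets of $\overline{\CC}\setminus\Delta_{j+1}$ (which comes out of the proof of Lemma \ref{lem:hnj} rather than its stated modulus-on-$\Delta_j\cup\Delta_{j+2}$ conclusion) and in verifying the applicability of Lemma \ref{lem:hnj} to $\vec{\tilde Q}_{\n}=\vec{Q}_{\n}$ via Theorem \ref{th1}.
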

\begin{proof}
	From \eqref{formrec2} and the definition of $\mathcal{H}_{\n,j}$ we have
	\[
	\mathcal{A}_{\n,j}(z) = \frac{Q_{\n,j}(z)}{Q_{\n,j+1}(z)} \int \frac{Q_{\n,j+1}^2(x)}{z-x}\frac{\mathcal{H}_{\n,j+1}(x)\D\sigma_{j+1}(x)}{Q_{\n,j}(x)Q_{\n,j+2}(x)}, \qquad j=0,\ldots,m-1,
	\]
	where the equality holds in $\Omega_{j+1}$. Due to \eqref{formrec3},
	\[
	  \frac{\varepsilon_{\n,j+1}K_{\n,j}^2\mathcal{A}_{\n,j}(z)}{(\Phi_j^{\eta_{\n,j}}/\Phi_{j+1}^{\eta_{\n,j+1}})(z)} = \frac{Q_{\n,j}(z)}{\Phi_j^{\eta_{\n,j}}(z)}\frac{\Phi_{j+1}^{\eta_{\n,j+1}}(z)}{Q_{\n,j+1}(z)}h_{\n,j}(z).
	\]
Now, from Lemma \ref{lem:hnj} and Theorem \ref{th1}, \eqref{asym_Anj} is immediate. For \eqref{asym_An0} take into account that $Q_{\n,0}\equiv 1, \Phi_0 \equiv 1,$ and $\Delta_0=\varnothing$.
\end{proof}

One may be tempted to think that condition  \eqref{const_ray} on the sequence of multi-indices $\Lambda$ in Theorem \ref{th1} is too restrictive. The fact is that the strong asymptotics of multilevel Hermite-Pad\'e polynomials is very sensitive to the way the sequence of multi-indices is taken. For example, if $\Lambda$ verifies \eqref{const_ray} and we define ${\Lambda}^l$ as
\[
  {\Lambda}^l := \{{\n}^l \in (\mathbb{Z}_+^m)^*: {\n}^l = \n + \vec{e}_l, \n \in \Lambda\},
\]
where $\vec{e}_l$ is the unitary vector with $1$ in $l$-th component, then for each $l$ fixed there exists
\begin{equation}
	\label{lambdal}
	\lim_{\n^l \in \Lambda^l} \frac{Q_{\n^l,j}(z)}{\Phi_{j}^{\eta_{\n^l,j}}(z)}, \qquad j=1,\ldots,m,
\end{equation}
but the limiting functions will depend on $l$. To prove this, write
\[
  \frac{Q_{\n^l,j}(z)}{\Phi_{j}^{\eta_{\n^l,j}}(z)} = \frac{Q_{\n,j}(z)}{\Phi_{j}^{\eta_{\n,j}}(z)}  \frac{ \Phi_{j}^{\eta_{\n,j}}(z)}{\Phi_{j}^{\eta_{\n^l,j}}(z)}\frac{Q_{\n^l,j}(z)}{Q_{\n,j}(z)}.
\]
Since $\Lambda $ verifies \eqref{const_ray}
\[
  \lim_{\n \in \Lambda} \frac{Q_{\n,j}(z)}{\Phi_{j}^{\eta_{\n,j}}(z)} = \frac{\mathsf{G}_j (z)}{\mathsf{G}_j (\infty)},
\]
uniformly on each compact subset of $\overline{\CC} \setminus \Delta_j$.
The second factor satisfies
\[
  \frac{ \Phi_{j}^{\eta_{\n,j}}(z)}{\Phi_{j}^{\eta_{\n^l,j}}(z)} =
\left\{
\begin{array}{cc}
1,  & j = 1,\ldots,l-1 , \\
1/\Phi_{j}(z), & j=l,\ldots,m .
\end{array}
\right.
\]
Finally, using \cite[Theorem 3.4]{SLL22}
\[ \lim_{\n \in \Lambda}  \frac{Q_{\n^l,j}(z)}{Q_{\n,j}(z)} =  \frac{{F}_j^{(l)}(z)}{{F}_j^{(l)}(\infty)},
\]
uniformly on each compact subset of $\overline{\CC} \setminus \Delta_j$, where
$ {F}_j^{(l)}, j=1,\ldots,m,$ is the solution of a system of boundary value problems defined in  \cite[Lemma 3.3]{SLL22}. Consequently, the limit in \eqref{lambdal} can be given explicitly. We will not dwell into more details and simply mention that using \cite[Theorem 3.4]{SLL22} one can extend Theorem \ref{th1} to sequences of multi-indices of the form
\[
  \Lambda^{\vec{v}} := \{\n +\vec{v}: \n \in \Lambda\},
\]
where $\Lambda \subset (\ZZ_+^m)^*$ verifies \eqref{const_ray} and $\vec{v} \in \ZZ_+^m$ is fixed.

\subsection{Rate of convergence of ML Hermite-Pad\'e approximants}

In \cite{lysov20}, the author obtains the logarithmic asymptotics of the linear forms $\mathcal{A}_{\n,j}$, $j=0,\ldots,m-1,$ and with it  estimates the rate of convergence of the  rational functions $\frac{a_{\n,j}}{a_{\n,m}}$, $j=0,\ldots,m-1,$ as $|\n| \to \infty$, along ray sequences of multi-indices. For straight ray sequences and Nikishin systems with Szeg\H{o}'s condition, using the results on strong asymptotics, we are able to give the exact rate of convergence.

\begin{corollary}
	\label{convaprox}
	Under the assumptions of Theorem \ref{th1}, we have
\begin{equation}
	\label{rate3}
	\lim_{\n\in\Lambda} \frac{\varepsilon_{\n,m}K_{\n,m-1}^2\Phi_m^{2|\n|}(z)}{\Phi_{m-1}^{\eta_{\n,m-1}}(z)}\left(\frac{a_{\n,j}}{a_{\n,m}}- \widehat{s}_{m,j+1}\right)(z) = \frac{\mathsf{G}_{m-1}(z)\mathsf{G}_m^{2}(\infty)}{\mathsf{G}_m^{2}(z)\mathsf{G}_{m-1}(\infty)}\frac{(-1)^{m-1} \widehat{s}_{m-1,j+1}(z)}{\sqrt{(z-a_m)(z-b_m)}},
\end{equation}
uniformly on compact subsets of $\overline{\CC}\setminus \cup_{i=j+1}^m \Delta_{i}$.
\end{corollary}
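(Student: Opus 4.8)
The plan is to express $\frac{a_{\n,j}}{a_{\n,m}}-\widehat{s}_{m,j+1}$ in terms of the forms $\mathcal{A}_{\n,k}$, whose strong asymptotics are known from Corollary~\ref{asym_forms}, and then to show that only one of them survives in the limit. The algebraic step is the identity
\[
  a_{\n,j}(z)-Q_{\n,m}(z)\,\widehat{s}_{m,j+1}(z)=(-1)^j\mathcal{A}_{\n,j}(z)+\sum_{k=j+1}^{m-1}(-1)^k\widehat{s}_{k,j+1}(z)\,\mathcal{A}_{\n,k}(z),
\]
valid on $\CC\setminus\bigcup_{i=j+1}^m\Delta_i$ (for $j=m-1$ the sum is empty and this is just the definition of $\mathcal{A}_{\n,m-1}$ together with $a_{\n,m}=Q_{\n,m}$, $\widehat{s}_{m,m}=\widehat\sigma_m$). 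I would prove it by downward induction on $j$: from \eqref{df:HP:ly:1} one has $a_{\n,j}=(-1)^j\mathcal{A}_{\n,j}-\sum_{k>j}(-1)^{k-j}a_{\n,k}\widehat{s}_{j+1,k}$; substituting this and the inductive expressions for $a_{\n,j+1},\dots,a_{\n,m-1}$ into the left-hand side and simplifying with the bilinear relations for Cauchy transforms on disjoint intervals (in particular $\widehat\sigma_l\widehat\sigma_{l+1}=\widehat{s}_{l,l+1}+\widehat{s}_{l+1,l}$ and $\widehat\sigma_l(z)\widehat{s}_{l+1,k}(z)-\widehat{s}_{l,k}(z)=\widehat\sigma_k(z)\widehat{s}_{k-1,l}(z)-\widehat{s}_{k,l}(z)$, both obtained from $\tfrac1{(z-x)(z-y)}=\tfrac1{x-y}(\tfrac1{z-x}-\tfrac1{z-y})$ and Fubini, exactly as in \cite[Lemma~2.1]{SLL22}) produces the asserted formula. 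Dividing by $a_{\n,m}=Q_{\n,m}$ gives
\[
  \Big(\frac{a_{\n,j}}{a_{\n,m}}-\widehat{s}_{m,j+1}\Big)(z)=\frac{(-1)^j\mathcal{A}_{\n,j}(z)}{Q_{\n,m}(z)}+\sum_{k=j+1}^{m-1}(-1)^k\widehat{s}_{k,j+1}(z)\,\frac{\mathcal{A}_{\n,k}(z)}{Q_{\n,m}(z)}.
\]

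Write $L_{\n}(z)$ for the normalizing factor $\varepsilon_{\n,m}K_{\n,m-1}^2\Phi_m^{2|\n|}(z)/\Phi_{m-1}^{\eta_{\n,m-1}}(z)$ of \eqref{rate3}, recall $\eta_{\n,m}=|\n|$, and factor each term through $\mathcal{A}_{\n,m-1}$:
\[
  L_{\n}(z)\frac{\mathcal{A}_{\n,k}(z)}{Q_{\n,m}(z)}=\Big[\frac{\varepsilon_{\n,m}K_{\n,m-1}^2\Phi_m^{\eta_{\n,m}}(z)\mathcal{A}_{\n,m-1}(z)}{\Phi_{m-1}^{\eta_{\n,m-1}}(z)}\Big]\Big[\frac{\Phi_m^{\eta_{\n,m}}(z)}{Q_{\n,m}(z)}\Big]\frac{\mathcal{A}_{\n,k}(z)}{\mathcal{A}_{\n,m-1}(z)}.
\]
By \eqref{asym_Anj} with index $m-1$ and \eqref{limfund***} with index $m$ the first two brackets converge, uniformly on compact subsets of $\overline{\CC}\setminus(\Delta_{m-1}\cup\Delta_m)$, to $\frac{\mathsf{G}_{m-1}(z)}{\mathsf{G}_{m-1}(\infty)}\frac{\mathsf{G}_m(\infty)}{\mathsf{G}_m(z)}\frac{1}{\sqrt{(z-a_m)(z-b_m)}}$ and $\frac{\mathsf{G}_m(\infty)}{\mathsf{G}_m(z)}$ respectively; their product times $(-1)^{m-1}\widehat{s}_{m-1,j+1}(z)$ equals the right-hand side of \eqref{rate3}. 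For $k=m-1$ the remaining factor is $1$, so, combined with the coefficient $(-1)^{m-1}\widehat{s}_{m-1,j+1}$ in the reduction formula, this summand converges to exactly the claimed limit (for $j=m-1$, reading $\widehat{s}_{m-1,m}\equiv1$, the proof is already complete).

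The crux is to show that all other summands vanish: for $k=j,\dots,m-2$, $\mathcal{A}_{\n,k}/\mathcal{A}_{\n,m-1}\to0$ as $\n\in\Lambda$, uniformly on compact subsets of $\overline{\CC}\setminus\bigcup_{i=j+1}^m\Delta_i$. Using \eqref{asym_An0}, \eqref{asym_Anj}, \eqref{kappas} and $\eta_{\n,i}=P_i|\n|$, this ratio equals, up to factors that stay bounded and bounded away from $0$, $\frac{K_{\n,m-1}^2}{K_{\n,k}^2}\,\frac{\Phi_k^{\eta_{\n,k}}(z)\Phi_m^{\eta_{\n,m}}(z)}{\Phi_{k+1}^{\eta_{\n,k+1}}(z)\Phi_{m-1}^{\eta_{\n,m-1}}(z)}$, of modulus $\exp(|\n|\,\Gamma_k(z)+o(|\n|))$ with
\[
  \Gamma_k(z)=\sum_{l=k+1}^{m-1}\big(2P_lV^{\lambda_l}(z)-P_{l-1}V^{\lambda_{l-1}}(z)-P_{l+1}V^{\lambda_{l+1}}(z)-2P_l\omega_l\big),
\]
each summand of which vanishes on $\Delta_l$ by \eqref{eq:vector*} and $\supp\lambda_l=\Delta_l$ (Lemma~\ref{soportecompleto}). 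I expect the main difficulty to be the potential-theoretic estimate $\Gamma_k(z)<0$ on $\overline{\CC}\setminus\bigcup_{i=j+1}^m\Delta_i$ — the statement that $\mathcal{A}_{\n,m-1}$ dominates the forms $\mathcal{A}_{\n,k}$, $k<m-1$, which is in essence the logarithmic rate already obtained in \cite{lysov20} and which can also be deduced from \eqref{eq:vector} by the maximum principle (examining $\Gamma_k$ on the boundary intervals and at $\infty$, where $\Gamma_k(z)=(p_{k+1}-p_m)\ln\frac1{|z|}+O(1)$). A minor additional point: \eqref{asym_Anj} is not uniform up to $\Delta_j$, so for $k=j$ and $z$ near $\Delta_j$ one concludes by the maximum principle on a neighborhood of $\Delta_j$ whose closure lies in $\overline{\CC}\setminus\bigcup_{i=j+1}^m\Delta_i$ and on whose boundary the asymptotics hold. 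Assembling the three steps, in the reduction formula every summand but $k=m-1$ tends to $0$, giving \eqref{rate3} uniformly on compact subsets of $\overline{\CC}\setminus\bigcup_{i=j+1}^m\Delta_i$.
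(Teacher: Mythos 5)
Your route is essentially the paper's: the reduction identity from \cite[Lemma 2.1]{LSJ19}, division by $a_{\n,m}=Q_{\n,m}$, factoring each term through $\mathcal{A}_{\n,m-1}$, convergence of the surviving summand via \eqref{asym_Anj} with index $m-1$ together with \eqref{limfund***}, and the claim that $\mathcal{A}_{\n,k}/\mathcal{A}_{\n,m-1}\to 0$ for $k\le m-2$. The one place you diverge is exactly the step you flag as ``the crux'': the paper does not redo the potential theory but quotes formula~(1.8) of \cite[Proposition 1.2]{lysov20} for the uniform decay of $\mathcal{A}_{\n,k}/\mathcal{A}_{\n,m-1}$ on compact subsets of $\overline{\CC}\setminus\cup_{i=j+1}^m\Delta_i$, whereas you sketch a self-contained maximum-principle argument via $\Gamma_k$. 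Your reduction of $\Gamma_k$ to a telescoped combination of the equilibrium relations \eqref{eq:vector*} is correct, and the argument could be finished (you would need to check the sign of $\Gamma_k$ on the boundary arcs $\partial\Delta_i$, $i=k+1,\dots,m-1$, and at $\infty$, and to treat $\Delta_j$ itself where $-P_jV^{\lambda_j}$ is subharmonic), but as written it is a plan, not a proof; since Lysov's cited result supplies this for free, that is the cleaner route and the one the paper takes. Two small inaccuracies: your parenthetical for $j=m-1$, ``reading $\widehat{s}_{m-1,m}\equiv1$,'' is not the paper's convention — $\widehat{s}_{m-1,m}$ is the Cauchy transform of $\langle\sigma_{m-1},\sigma_m\rangle$, not $1$ — but this is moot because the corollary concerns the approximants $a_{\n,j}/a_{\n,m}$ and the paper's proof runs over $j=0,\dots,m-2$; and the ``bilinear relations'' you invoke to re-derive the reduction formula are more elaborate than needed — the identity follows directly from \cite[Lemma 2.1]{LSJ19} applied with $\mathcal{L}_j=\mathcal{A}_{\n,j}$, $r=m-1$, as the paper notes.
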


\begin{proof} Taking into account \cite[Lemma 2.1]{LSJ19} with $\mathcal{L}_j=\mathcal{A}_{\n,j}$, $j=0,1,\ldots,m-2$ and $r=m-1$, we have (for all points where the expression is meaningful)
\[
\mathcal{A}_{\n,j} + \sum_{k=j+1}^{m-1} (-1)^{k-j} \widehat{s}_{k,j+1}\mathcal{A}_{\n,k} = (-1)^j(a_{\n,j}-a_{\n,m}\widehat{s}_{m,j+1}).
\]
Though \cite{LSJ19} is dedicated to a special case of ML Hermite-Pad\'e approximation the forms $\mathcal{L}_j$ have a general character and that lemma is also applicable in the present situation.
Dividing the above equality by $Q_{\n,m} = a_{\n,m}$, we obtain
\[
\frac{\mathcal{A}_{\n,j}}{Q_{\n,m}} + \sum_{k=j+1}^{m-1} (-1)^{k-j} \widehat{s}_{k,j+1}\frac{\mathcal{A}_{\n,k}}{Q_{\n,m}} = (-1)^j\left(\frac{a_{\n,j}}{a_{\n,m}}- \widehat{s}_{m,j+1}\right).
\]
This is equivalent to
\begin{equation}
	\label{rate1}
\frac{\mathcal{A}_{\n,m-1}}{Q_{\n,m}} \left(\frac{\mathcal{A}_{\n,j}}{\mathcal{A}_{\n,m-1}} + \sum_{k=j+1}^{m-1} (-1)^{k-j} \widehat{s}_{k,j+1}\frac{\mathcal{A}_{\n,k}}{\mathcal{A}_{\n,m-1}} \right) = (-1)^j\left(\frac{a_{\n,j}}{a_{\n,m}}- \widehat{s}_{m,j+1}\right).
\end{equation}

From formula (1.8) in \cite[Proposition 1.2]{lysov20}, it follows that the ratios $\mathcal{A}_{\n,k}/\mathcal{A}_{\n,m-1}$, $k=j,\ldots,m-2$, converge uniformly to zero on compact subsets of $\overline{\CC}\setminus \cup_{i=j+1}^m \Delta_{i}$;  consequently,
\begin{equation}
	\label{cerococAnj}
\lim_{\n \in \Lambda}  \frac{\mathcal{A}_{\n,j}}{\mathcal{A}_{\n,m-1}} + \sum_{k=j+1}^{m-1} (-1)^{k-j} \widehat{s}_{k,j+1}\frac{\mathcal{A}_{\n,k}}{\mathcal{A}_{\n,m-1}} = (-1)^{m-1-j} \widehat{s}_{m-1,j+1},
\end{equation}
on compact subsets of $\overline{\CC}\setminus \cup_{i=j+1}^m \Delta_{i}$. On the other hand, from Corollary \ref{asym_forms} and \eqref{limfund***} we have
\begin{equation}
	\label{ratio_Am_1Qm}
	\lim_{n\in\Lambda} \frac{\varepsilon_{\n,m}K_{\n,m-1}^2\Phi_m^{2|\n|}(z)}{\Phi_{m-1}^{\eta_{\n,m-1}}(z)} \frac{\mathcal{A}_{\n,m-1}(z)}{Q_{\n,m}(z)} = \frac{\mathsf{G}_{m-1}(z)\mathsf{G}_m^{2}(\infty)}{\mathsf{G}_m^{2}(z)\mathsf{G}_{m-1}(\infty)}\frac{1}{\sqrt{(z-a_m)(z-b_m)}},
\end{equation}
uniformly on compact subsets of $\overline{\CC}\setminus(\Delta_{m-1}\cup\Delta_m)$.

Therefore, from \eqref{cerococAnj} and \eqref{ratio_Am_1Qm}, we get
\[
	\lim_{\n\in\Lambda} \frac{\varepsilon_{\n,m}K_{\n,m-1}^2\Phi_m^{2|\n|}(z)}{\Phi_{m-1}^{\eta_{\n,m-1}}(z)} \frac{\mathcal{A}_{\n,m-1}(z)}{Q_{\n,m}(z)} \left(\frac{\mathcal{A}_{\n,j}}{\mathcal{A}_{\n,m-1}} + \sum_{k=j+1}^{m-1} (-1)^{k-j} \widehat{s}_{k,j+1}\frac{\mathcal{A}_{\n,k}}{\mathcal{A}_{\n,m-1}} \right)(z) =\]
\[ \frac{\mathsf{G}_{m-1}(z)\mathsf{G}_m^{2}(\infty)}{\mathsf{G}_m^{2}(z)\mathsf{G}_{m-1}(\infty)}\frac{(-1)^{m-1-j} \widehat{s}_{m-1,j+1}(z)}{\sqrt{(z-a_m)(z-b_m)}},
\]
on compact subsets of $\overline{\CC}\setminus \cup_{i=j+1}^m$ which together with \eqref{rate1} give us \eqref{rate3}.
\end{proof}

\subsection{Strong asymptotics of Cauchy biorthogonal polynomials}  In this subsection, we apply Theorem \ref{th1} to obtain the strong asymptotics of Cauchy biorthogonal polynomials.

As above consider the Nikishin system $\mathcal{N}(\sigma_1,\ldots,\sigma_m)$ where $\supp \sigma_k \subset \Delta_k$ and the intervals $\Delta_k$ are bounded. Define the kernel function
\[
  K(x_1,x_m) = \int_{\Delta_2}\int_{\Delta_3}\cdots \int_{\Delta_{m-1} }
		\frac{\, \D \sigma_{m-1}(x_{m-1})\cdots \, \D\sigma_3(x_3)\D\sigma_2(x_2)   }
		{(x_{m-1}-x_m)(x_{m-2}-x_{m-1})\cdots\,(x_2-x_3)(x_1-x_2)  }.
\]
When $m=2$, take
\[ K(x_1,x_2) \equiv 1.
\]
 There exist two sequences of monic polynomials $(P_n), (Q_n), n \in \mathbb{Z}_+,$ such that for each $n$, $\deg(P_n) = \deg(Q_n) = n$ and
\[
  \int_{\Delta_1\times \Delta_m} P_k(x_1) K(x_1,x_m) Q_n(x_m) \,\mbox{d} \sigma_1(x_1) \,\mbox{d} \sigma_m(x_m)  = C_n \delta_{k,n},\qquad C_n \neq 0.
\]
As usual, $\delta_{k,n} = 0$, $k \neq n$, $\delta_{n,n} = 1$. The polynomials $P_n$ and $Q_n$ are called Cauchy biorthogonal polynomials with respect to $(\sigma_1,\ldots,\sigma_m)$.

When $m=2$ biorthogonal polynomials appear in the analysis of the two matrix model \cite{Ber,BGS13} and
in the search of discrete solutions of the Degasperis-Procesi equation \cite{Bertola:CBOPs} through a Hermite-Pad\'e approximation problem for two discrete measures.  In \cite{BGS13} a result on the strong asymptotics of Cauchy biorthogonal polynomials was obtained for a pair of Laguerre type weights with unbounded support and exponential decay at infinity. Using Theorem \ref{th1} we can give the strong asymptotics of general Cauchy biorthogonal polynomials with respect to a system of measures with compact support.

In \cite[Lemma 2.4]{ULS20} it was proved that the $n$-th biorthogonal polynomial $Q_n$ with respect to $(\sigma_1,\ldots,\sigma_m)$ coincides with the ML Hermite-Pad\'e polynomial $a_{\n,m}$  of the Nikishin system $\mathcal{N}(\sigma_1,\ldots,\sigma_m)$ associated with the multi-index $\n = (n,0,\ldots,0)$. Because of the symmetry of the kernel it turns out that the biorthogonal polynomial $P_n$ coincides with the corresponding $a_{\n,m}$ constructed for the reversed Nikishin system $\mathcal{N}(\sigma_m,\ldots,\sigma_1)$ and the multi-index $\n = (n,0, \ldots,0)$ (see \cite[Subsection 2.3]{ULS20}. Let us focus on $Q_n$. From Theorem \ref{th1}, we obtain

\begin{corollary}
	\label{cauchy}
	Assume that $\vec{\sigma} \in S(\vec{\Delta})$. Then
 \[
   \lim_{n \to \infty} \frac{Q_{n}(z)}{\Phi_m^{n}(z)} = \frac{\mathsf{G}_m(z)}{\mathsf{G}_m(\infty)},
\]
where $\Phi_m$ is given by \eqref{compar} for the vector equilibrium problem on the system of intervals $\vec{\Delta}$ with interaction matrix
 \[
 \left(\begin{array}{r r r r r}
 1      & -1/2 &  0 & \cdots &  0 \\
-1/2      &  1 & -1/2 & \ddots &   0 \\
 0      & -1/2 &  1 & \ddots &   0 \\
 \vdots &  \ddots  &  \ddots  & \ddots &    \vdots \\
 0      &  0 &  0 & \cdots &  1
\end{array}
\right)_{m\times m}
\]
and $\mathsf{G}_m$ is the last component of the fixed point of the operator
$\vec{T}_{\vec{w}}$ for the vector weight \eqref{weight:dir}
 where $\tilde{h}_m\equiv 1$ and the functions $\tilde{h}_j$, $j=1,\ldots,m-1$ are given by \eqref{def:htil}.
\end{corollary}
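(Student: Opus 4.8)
The plan is to identify the biorthogonal polynomial $Q_n$ with one of the polynomials $Q_{\n,m}$ studied in Theorem \ref{th1} and then specialize that theorem. By \cite[Lemma 2.4]{ULS20}, for $\n = (n,0,\ldots,0)$ the $n$-th Cauchy biorthogonal polynomial $Q_n$ coincides with the ML Hermite-Pad\'e polynomial $a_{\n,m}$ of $\mathcal{N}(\sigma_1,\ldots,\sigma_m)$, which by the normalization fixed in Subsection \ref{2.1} is exactly the monic polynomial $Q_{\n,m}$. So I would work with the sequence of multi-indices $\Lambda := \{(n,0,\ldots,0) : n \in \NN\}$. For $\n \in \Lambda$ one has $|\n| = n$ and $n_j/|\n| = p_j$ with $(p_1,\ldots,p_m) = (1,0,\ldots,0)$, hence $\Lambda$ is a straight ray sequence in the sense of \eqref{const_ray} with $p_1 = 1 > 0$ and $p_1 \geq p_2 \geq \cdots \geq p_m$; also $\eta_{\n,j} = n_1 + \cdots + n_j = n$ for every $j = 1,\ldots,m$.

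Next I would match the auxiliary data of Theorem \ref{th1} to those in the statement. Since $P_j = \sum_{k=1}^j p_k = 1$ for every $j$, the interaction matrix \eqref{interaction} reduces to the $m \times m$ tridiagonal matrix with all diagonal entries equal to $1$ and all sub- and supra-diagonal entries equal to $-1/2$, i.e.\ the one displayed above; accordingly the equilibrium problem \eqref{eq:vector}, the measures $\lambda_j$, the constants $C_j$, and the comparison functions $\Phi_j$ of \eqref{compar} are precisely those appearing in the Corollary, and Lemma \ref{soportecompleto} (applicable because $p_1 > 0$ and $p_1 \geq \cdots \geq p_m$) gives $\supp \lambda_j = \Delta_j$. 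On the other side, the vector $\vec{\mathsf{G}} = (\mathsf{G}_1,\ldots,\mathsf{G}_m)$ of Szeg\H{o} functions in Theorem \ref{th1} is, by the construction carried out in the proof of Theorem \ref{th1} together with Theorem \ref{puntofijo}, the unique fixed point of $\vec{T}_{\vec{w}}$ for the vector weight \eqref{weight:dir} with $\tilde{h}_m \equiv 1$ and $\tilde{h}_j$, $j=1,\ldots,m-1$, as in \eqref{def:htil}; its last component is the function $\mathsf{G}_m$ in the statement.

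Finally I would simply invoke \eqref{limfund***} of Theorem \ref{th1} with $j = m$, whose hypotheses are met ($\Lambda$ a straight ray sequence with $p_1 \geq \cdots \geq p_m$, and $\vec{\sigma} \in \mathcal{S}(\vec{\Delta})$ by assumption):
\[
  \lim_{\n \in \Lambda} \frac{Q_{\n,m}(z)}{\Phi_m^{\eta_{\n,m}}(z)} = \frac{\mathsf{G}_m(z)}{\mathsf{G}_m(\infty)}
\]
uniformly on compact subsets of $\overline{\CC} \setminus \Delta_m$. Substituting $Q_{\n,m} = Q_n$ and $\eta_{\n,m} = n$, and observing that $\n$ exhausts $\Lambda$ exactly as $n$ runs over $\NN$, yields the asserted limit. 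I do not expect any genuine obstacle here: all the analytic content is already contained in Theorem \ref{th1}, and the only care needed is bookkeeping --- checking that the multi-index $(n,0,\ldots,0)$ produces a legitimate straight ray sequence and that, under the specialization $P_j \equiv 1$, the interaction matrix and the weight vector \eqref{weight:dir} coincide with those named in the Corollary.
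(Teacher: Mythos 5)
Your proposal is correct and follows exactly the route the paper takes: identify $Q_n$ with $a_{\n,m}=Q_{\n,m}$ via \cite[Lemma 2.4]{ULS20} for $\n=(n,0,\ldots,0)$, verify that this gives a straight ray sequence with $(p_1,\ldots,p_m)=(1,0,\ldots,0)$ (hence $P_j\equiv 1$, $\eta_{\n,j}\equiv n$, and the interaction matrix \eqref{interaction} reduces to the one displayed), and then read off \eqref{limfund***} with $j=m$. The bookkeeping you carried out is precisely what the paper leaves implicit after the sentence ``From Theorem \ref{th1}, we obtain.''
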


\end{document}